\documentclass[12pt]{amsart}


\usepackage{amsmath,amsfonts,amsthm,amssymb}
\usepackage{hyperref}
\usepackage{tikz}
\usetikzlibrary{arrows.meta, positioning, calc}


\newcommand{\nodedistance}{0.5}
\newcommand{\circleinnersep}{1.6}
\renewcommand{\arraystretch}{1.2}

\makeatletter
\newsavebox{\@brx}
\newcommand{\llangle}[1][]{\savebox{\@brx}{\(\m@th{#1\langle}\)}%
	\mathopen{\copy\@brx\kern-0.5\wd\@brx\usebox{\@brx}}}
\newcommand{\rrangle}[1][]{\savebox{\@brx}{\(\m@th{#1\rangle}\)}%
	\mathclose{\copy\@brx\kern-0.5\wd\@brx\usebox{\@brx}}}
\makeatother

\usepackage{geometry}
\usepackage{pdflscape}
\usepackage[width=.75\textwidth]{caption}

\usepackage{a4wide}
\usepackage{enumerate}
\usepackage{multicol, multirow, longtable}
\newtheorem{theorem}{Theorem}[section]
\newtheorem{lemma}[theorem]{Lemma}
\newtheorem{proposition}[theorem]{Proposition}
\newtheorem{corollary}[theorem]{Corollary}

\newtheorem{definition}[theorem]{Definition}

\usepackage{parskip}

\newtheorem{theo}{Theorem}

\theoremstyle{remark}
\newtheorem{remark}[theorem]{Remark}

\newtheorem*{claim*}{Claim}


\numberwithin{equation}{section}

\newcommand{\C}{\ensuremath{\mathbb{C}}}
\newcommand{\R}{\ensuremath{\mathbb{R}}}
\newcommand{\HH}{\ensuremath{\mathbb{H}}}
\newcommand{\g}[1]{\ensuremath{\mathfrak{#1}}}

\DeclareMathOperator{\codim}{codim}

\DeclareMathOperator{\Ad}{Ad}
\DeclareMathOperator{\ad}{ad}

\DeclareMathOperator{\spann}{span}
\DeclareMathOperator{\diag}{diag}
\DeclareMathOperator{\Ric}{Ric}

\DeclareMathOperator{\rank}{rank}

\renewcommand{\H}{\ensuremath{\mathbb{H}}}

\newcommand{\spin}[1]{\ensuremath{\mathsf{Spin}_{#1}}}
\newcommand{\su}[1]{\ensuremath{\mathsf{SU}_{#1}}}
\renewcommand{\u}[1]{\ensuremath{\mathsf{U}_{#1}}}
\newcommand{\so}[1]{\ensuremath{\mathsf{SO}_{#1}}}
\newcommand{\oo}[1]{\ensuremath{\mathsf{O}_{#1}}}
\renewcommand{\sp}[1]{\ensuremath{\mathsf{Sp}_{#1}}}
\newcommand{\gr}[3]{\ensuremath{\mathsf{G}_{#1}(#3^{#2})}}
\newcommand{\grp}[3]{\ensuremath{\mathsf{G}^+_{#1}(#3^{#2})}}

\makeatletter
\g@addto@macro\bfseries{\boldmath}
\makeatother

	
	\DeclareMathOperator{\trace}{trace}
	
	
	\mathcode`l="8000
	\begingroup
	\makeatletter
	\lccode`\~=`\l
	\DeclareMathSymbol{\lsb@l}{\mathalpha}{letters}{`l}
	\lowercase{\gdef~{\ifnum\the\mathgroup=\m@ne \ell \else \lsb@l \fi}}%
	\endgroup

	
	\newcommand*{\defeq}{\mathrel{\vcenter{\baselineskip0.5ex \lineskiplimit0pt
				\hbox{\scriptsize.}\hbox{\scriptsize.}}}%
		=}
	
	
	
	

\newcommand{\NN}{\mathbb{N}}
\newcommand{\RR}{\mathbb{R}}

\newcommand{\CC}{\mathbb{C}}

\newcommand{\sph}{\mathbb{S}}
\newcommand{\RP}{\mathbb{RP}}
\newcommand{\CP}{\mathbb{CP}}
\newcommand{\HP}{\mathbb{HP}}

\newcommand{\ZZ}{\mathbb{Z}}

\newcommand{\sss}{\ensuremath{\mathsf{S}}}
	\DeclareMathOperator{\scal}{scal}
	
		\DeclareMathOperator{\Mod}{mod}

\begin{document}
\title[Infinite families of manifolds of positive intermediate Ricci curvature]{Infinite families of manifolds of\\positive $k^{\rm th}$-intermediate Ricci curvature with $k$ small}
\author[M.~Dom\'{\i}nguez-V\'{a}zquez]{Miguel Dom\'{\i}nguez-V\'{a}zquez}
\address{Departamento de Matem\'aticas, Universidade de Santiago de Compostela, Spain.}
\email{miguel.dominguez@usc.es}

\author[D.~Gonz\'alez-\'Alvaro]{David Gonz\'alez-\'Alvaro}
\address{ETSI de Caminos, Canales y Puertos, Universidad Polit\'ecnica de Madrid, Spain.}
\email{david.gonzalez.alvaro@upm.es}

\author[L.~Mouill\'e]{Lawrence Mouill\'e}
\address{Department of Mathematics, Rice University, USA.}
\email{lm59@rice.edu}

\begin{abstract}
Positive $k^{\rm th}$-intermediate Ricci curvature on a Riemannian $n$-manifold, to be denoted by $\Ric_k>0$, is a condition that interpolates between positive sectional and positive Ricci curvature (when $k =1$ and $k=n-1$ respectively). In this work, 
we produce many examples of manifolds of $\Ric_k>0$ with $k$ small by examining symmetric and normal homogeneous spaces, along with certain metric deformations of fat homogeneous bundles. As a consequence, we show that every dimension $n\geq 7$ congruent to $3\,\Mod 4$ supports infinitely many closed simply connected manifolds of pairwise distinct homotopy type, all of which admit homogeneous metrics of $\Ric_k>0$ for some $k<n/2$. We also prove that each dimension $n\geq 4$ congruent to $0$ or $1\,\Mod 4$ supports closed manifolds which carry metrics of $\Ric_k>0$ with $k\leq n/2$, but do not admit metrics of positive sectional curvature. 
\end{abstract}

\thanks{The first author has been supported by projects PID2019-105138GB-C21 (AEI/FEDER, Spain), ED431C 2019/10, ED431F 2020/04 (Xunta de Galicia, Spain) and by the	Ram\'{o}n y Cajal program of the Spanish State Research Agency. The second author received support from MINECO grant MTM2017-85934-C3-2-P. The third author was supported in part by NSF Grant DMS-1612049.}

\subjclass[2010]{Primary: 53C20. Seconday: 53C21, 53C30, 53C35, 58D19.}

\keywords{Positive $k^{\rm th}$-intermediate Ricci curvature, symmetric spaces, normal homogeneous spaces, fat bundles.}
\maketitle



A complete Riemannian manifold $(M,g)$ is said to have \emph{positive $k^{th}$-intermediate Ricci curvature}, to be denoted by $\Ric_k>0$, if for every point $p\in M$ and every set of $k+1$ orthonormal tangent vectors $x,e_1,\dots,e_k\in T_p M$, the sum of the sectional curvatures of the planes spanned by $x,e_i$ is positive, i.e.
\[
\sum_{i=1}^k \sec_g(x,e_i)>0.
\]
If $n=\dim M$, then $\Ric_1>0$ and $\Ric_{n-1}>0$ correspond to the classical conditions of positive sectional curvature $\sec>0$ and positive Ricci curvature $\Ric>0$, respectively. Note that $\Ric_k>0$ implies $\Ric_{l}>0$ for any $l\geq k$; thus, the smaller the $k$, the more restrictive the condition $\Ric_k>0$. We refer the reader to Section~\ref{S:preliminaries} for background, context, and a more general definition of $\Ric_k$. 

At the present state of knowledge, no purely topological obstructions are known for a closed manifold $M$ to admit a metric of $\Ric_k>0$ with $k>1$, except those already obstructing the existence of metrics of $\Ric>0$ or positive scalar curvature (e.g.\ Bonnet-Myers Theorem or index-theoretical results by Atiyah-Singer-Lichnerowicz-Hitchin; see \cite{Zi07}).

As the dimension of the manifold under consideration increases, the condition $\sec>0$ appears to be stronger and stronger. In fact, we only know of two dimensions supporting infinitely many simply connected closed manifolds of $\sec>0$ of pairwise distinct homotopy type ($7$ and $13$, given by Eschenburg and Bazaikin spaces respectively). Furthermore, in dimensions above $24$, the only known examples are compact rank one symmetric spaces. In contrast, the condition $\Ric>0$ appears to be weaker and weaker as the dimension grows. In particular, every dimension $\geq 4$ is known to support infinitely many simply connected closed manifolds of $\Ric>0$ of pairwise distinct real homotopy type \cite{SY91}. 


The main goal of this article is to generate examples of manifolds of $\Ric_k>0$. An elementary way to produce new examples is to take the Riemannian product of two manifolds $M^n=N_1\times N_2$. However, such a product metric will be of $\Ric_k>0$ only for $k>\frac{n}{2}$, even if both factors are of $\sec >0$; see Proposition~\ref{PROP:products}. In order to distinguish our examples from such trivial constructions, we put primary focus in this article on spaces $M^n$ of $\Ric_{k}>0$ with $k\leq \frac{n}{2}$ and, more generally, on spaces of $\Ric_{k}>0$ where $k$ is small compared to their dimension. The following theorem is a particular instance of the results we derive in this work.

\begin{theo}\label{THM:main_theorem_existence_short}
	Every dimension $n\geq 7$ congruent to $3\,\Mod 4$ supports infinitely many closed simply connected manifolds of pairwise distinct homotopy type, all of which admit homogeneous metrics of $\Ric_k>0$ for some $k<n/2$.
\end{theo}

The families of spaces referenced in Theorem~\ref{THM:main_theorem_existence_short}, which  are generalized versions of the so-called Aloff-Wallach spaces from \cite{AW75}, are defined in \eqref{eq:Wpq} below.
The reader can find the precise $k$ for each of the spaces and a discussion about their topology in Theorem~\ref{THM:homogeneous_simple_version} and the paragraph following it, respectively. Besides the spaces in Theorem~\ref{THM:main_theorem_existence_short}, we construct many other manifolds of $\Ric_k>0$ for some $k< \frac{n}{2}$, including the unit tangent bundles or projectivized tangent bundles of compact rank one symmetric spaces. 

A number of results is involved in our constructions, and they all can be found in Section~\ref{SEC:tools_results}; we believe each of them is of independent interest. Our methods generalize those that were used to construct all homogeneous spaces of $\sec >0$, whose classification was achieved in the 1970s. In particular, we will study curvature properties of symmetric spaces, normal homogeneous spaces, and certain homogeneous spaces that fiber over symmetric spaces via fat homogeneous bundles.

Because we study homogeneous metrics here, we have many isometric actions at our disposal. By taking appropriate quotients, we obtain many manifolds of $\Ric_k>0$ which, by Synge's theorem, cannot carry metrics of $\sec >0$. Subsequently, we establish the following result, which we prove in Subsection~\ref{SS:Ric2}. 

\begin{theo}\label{THM:main_theorem_quotients}
	Each dimension $n\geq 4$ congruent to $0$ or $1\,\Mod 4$ supports (non-simply connected) closed manifolds which admit metrics of $\Ric_k>0$ for some $k\leq n/2$, but not metrics of $\sec>0$.
\end{theo}

\section{Summary of Results}\label{SEC:tools_results}

In this section, we discuss the main results of this article, put them into context, and examine their implications. 
Specifically, we deal with normal homogeneous and symmetric spaces in Subsection~\ref{SS:normal_homogeneous_symmetric}, we discuss fat homogeneous bundles in Subsection~\ref{SS:fat_homogeneous_bundles}, and we examine some important consequences in relation to non-simply connected examples, manifolds of $\Ric_2>0$, and Riemannian submersions in Subsection~\ref{SS:Ric2}.

\subsection{Normal homogeneous and symmetric spaces}\label{SS:normal_homogeneous_symmetric}
We start with the class of compact homogeneous spaces $G/H$ with \textit{finite fundamental group} and $G$ connected. 
It is a classical result that \emph{normal} homogeneous metrics on $G/H$ (i.e.\ descending from a bi-invariant metric on $G$) are of $\Ric>0$; see e.g.\ the work of Nash \cite[Proposition~3.4]{Na79} or Berestovskii \cite[Theorem~1]{Be95}. As we shall see in Theorem~\ref{THM:extension_Nash_Ber} below, for most spaces, normal metrics are actually ``more curved'' than simply $\Ric>0$ from the perspective of positive $k^\mathrm{th}$-intermediate Ricci curvature. 

First, we observe that for any two normal metrics $g$, $g'$ on $G/H$, the minimum $k$ for which $g$ is of $\Ric_k>0$ agrees with the minimum $k$ for which $g'$ is of $\Ric_k>0$; see Corollary~\ref{cor:independence}. Hence, in order to facilitate our discussion, we introduce the following notation:
\[
b(G/H)\defeq \min\{k\in\NN : \Ric_k>0 \text{ for a normal homogeneous metric } g \text{ on }G/H\}.
\]
Note that the integer $b(G/H)$ depends on the pair $(G,H)$ rather than on the diffeomorphism type of $G/H$; see Remarks~\ref{REM:b_depends_on_pair} and \ref{REM:b_depends_on_pair_2} below. With this notation, the result of Nash-Berestovskii can be stated with the inequality $b(G/H)\leq \dim G/H-1$. We extend this result by characterizing the equality case:

\begin{theo}\label{THM:extension_Nash_Ber}
Let $G/H$ be a normal homogeneous space for a semisimple Lie group $G$. Then $b(G/H)=\dim G/H - 1$ if and only if $G/H$ locally splits off a factor isometric to a round $2$-sphere. 
\end{theo}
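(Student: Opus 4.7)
The plan is to prove both implications separately, with the ``only if'' direction requiring a substantial Lie-algebraic argument that culminates in a clean geometric observation about isotropy orbits.

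\textbf{The ``if'' direction} follows from a short Riemannian-product computation: passing to the universal cover, $G/H$ becomes $S^2\times N$. For a unit vector $x$ tangent to the $S^2$ factor and any orthonormal basis $e_1,\dots,e_{n-2}$ of $T_pN$, mixed planes in a Riemannian product have vanishing sectional curvature, so $\sum_{i=1}^{n-2}\sec(x,e_i) = 0$, showing $\Ric_{n-2}\not>0$ and hence $b(G/H)\geq n-1$. Combined with the Nash--Berestovskii bound $b\leq n-1$, equality holds.

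\textbf{The ``only if'' direction} starts from $b(G/H)=n-1$, which produces a unit vector $x\in\mathfrak{m}$ and an $(n-2)$-dimensional subspace $V\subset x^\perp\cap \mathfrak{m}$ with all sectional curvatures $\sec(x,V)$ vanishing. The normal formula $\sec(x,y) = \tfrac14\|[x,y]_{\mathfrak{m}}\|^2 + \|[x,y]_{\mathfrak{h}}\|^2$ forces $[x, V] = 0$, so $W:=\mathfrak{z}_{\mathfrak{g}}(x)\cap\mathfrak{m}$ has dimension at least $n-1$. Using bi-invariance $\langle [x,h'],y\rangle = -\langle h',[x,y]\rangle$ together with the semisimplicity of $\mathfrak{g}$ (which gives $Z(\mathfrak{g})=0$), one rules out $\dim W = n$ and shows $[x,\mathfrak{m}] = \mathbb{R}h$ with $h \in \mathfrak{h}$, as well as $[\mathfrak{h}, x] = \mathbb{R}u$ for a unit $u \in \mathfrak{m}$ orthogonal to $W$. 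I would then reduce to the de Rham irreducible case by invoking the fact that the de Rham decomposition of a normal homogeneous space corresponds to an $\Ad(H)$-invariant orthogonal decomposition $\mathfrak{m}=\bigoplus\mathfrak{m}_i$ with $[\mathfrak{m}_i,\mathfrak{m}_j]=0$ for $i\neq j$; writing $x = \sum x_i$ and applying the above inequality to each factor forces, by a simple dimension count, exactly one $x_i$ to be nonzero, so $x$ lies entirely in a single irreducible factor.

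\textbf{The irreducible-case step.} The $H_0$-orbit $\mathcal{O}$ of $x$ has tangent $[\mathfrak{h}, x] = \mathbb{R}u$ at $x$, so $\mathcal{O}$ is a compact 1-dimensional submanifold of $\mathfrak{m}$ traced out by $\exp(\mathbb{R}h)$. Hence $\spann(\mathcal{O})$ equals the $\ad_h$-cyclic subspace of $\mathfrak{m}$ generated by $x$; a direct iteration gives $\ad_h^2 x = -c^2 x + c w$ for some $w \in W \cap x^\perp$, so this cyclic span is $\spann\{x, u\}$ if $w=0$ and $\spann\{x, u, w\}$ otherwise. By irreducibility of the isotropy representation this span must equal $\mathfrak{m}$, so $\dim\mathfrak{m} \in \{2,3\}$. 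To eliminate the 3-dimensional case, replace $\mathfrak{g}$ by its quotient modulo the ideal of $\mathfrak{h}$ that acts trivially on $\mathfrak{m}$ (a direct summand by semisimplicity), so that $\mathfrak{h}$ acts faithfully on $\mathfrak{m}$; the stabilizer of $x$ in $\mathfrak{h}$ preserves each of the three 1-dimensional summands $\mathbb{R}x$, $\mathbb{R}u$, $\mathbb{R}w$ and, being compact and connected, acts trivially on each; faithfulness then forces this stabilizer to be zero, giving $\dim\mathfrak{h}=1$; but a 1-dimensional abelian algebra cannot act irreducibly on $\mathbb{R}^3$, a contradiction. Therefore $\dim\mathfrak{m} = 2$ and the corresponding factor is $S^2$.

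The main obstacle I anticipate is justifying cleanly the algebraic nature of the de Rham decomposition in the normal homogeneous setting: one must know that each de Rham factor is itself a normal homogeneous space of a semisimple Lie group and that distinct summands of $\mathfrak{m}$ commute in $\mathfrak{g}$. This is a standard structural fact but requires a careful invocation; once in place, the elegant cyclic-subspace plus faithfulness argument above completes the proof without any classification.
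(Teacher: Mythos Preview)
Your ``if'' direction is correct and essentially the paper's. The ``only if'' direction, however, contains two genuine errors in the irreducible-case step.

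First, you invoke irreducibility of the isotropy representation after having reduced only to \emph{de Rham} irreducibility. These are not the same for normal homogeneous spaces: the flag manifold $\su{3}/T^2$ with its normal metric, or any Aloff--Wallach space $W^7_{p,q}$, is de Rham irreducible yet has isotropy representation splitting into several summands. So the inference ``by irreducibility of the isotropy representation this span must equal $\mathfrak{m}$'' is unjustified. (The structural fact you flag as the main obstacle---that the de Rham decomposition is realised at the Lie-algebra level---would not save you even if granted.)

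Second, even granting isotropy irreducibility, your bound $\dim\spann(\mathcal{O})\leq 3$ fails. With your normalisations $\ad_h x=u$ and $\ad_h u=-x+w$, one computes $\ad_h w=-|w|^2\,u + w'$ with $w'\in W\cap x^\perp\cap w^\perp$, and there is no reason for $w'$ to vanish. The $\ad_h$-cyclic span of $x$ is just the smallest invariant subspace of a skew-symmetric endomorphism of $\mathfrak{m}$ containing $x$, and carries no a priori dimension bound.

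The paper's route is entirely different and sidesteps these issues. Instead of working inside $\mathfrak{m}$, it lifts to $G$: from $\dim Z_{\g{p}}(x)=\dim G/H-1$ together with $\dim\Ad(H)x\leq 1$ one gets $\dim Z_{\g{g}}(x)\geq \dim G-2$, i.e.\ $b(G)\geq \dim G-2$. The paper then invokes its independent computation of $b(-)$ for compact Lie groups, viewed as symmetric spaces $(G\times G)/\Delta G$, to conclude that $\g{g}$ must split off an $\g{su}_2$ ideal; this ideal is identified as $\spann\{T,x,[T,x]\}$, and the local $S^2$ factor of $G/H$ follows immediately. Thus the paper trades your isotropy-orbit analysis for the structure theory of centralisers in semisimple Lie algebras, at the price of a forward reference to the symmetric-space section.
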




Theorem~\ref{THM:extension_Nash_Ber} can be seen as a classification of those homogeneous spaces $G/H$ with $b(G/H)=\dim G/H -1$. In general, it would be a large undertaking to determine $b(-)$ for every homogeneous space. Indeed, only the opposite and most restrictive case, $b(G/H)=1$, is fully known. Berger classified all pairs $(G,H)$ for which $G/H$ admits a normal homogeneous metric of $\sec>0$ \cite{Be61}, with an omission corrected by Wilking \cite{Wi99}. The rather short list consists of compact rank one symmetric spaces and three other spaces: the so-called Berger spaces $B^7\defeq\so{5}/\so{3}$ and $B^{13}\defeq\su{5}/\sp{2}\u{1}$, and Wilking's space $(\so{3}\times \su{3})/\u{2}$. For descriptions of these spaces,  see, for example, \cite{WZ18}.


\begin{remark}\label{REM:b_depends_on_pair}
Wilking's positively curved normal homogeneous space $(\so{3}\times \su{3})/\u{2}$ is diffeomorphic to the Aloff-Wallach space $W_{1,1}^{7}\defeq\su{3}/\sss(\u{1}^{1,1}\times \u{1})$; see \eqref{EQ:circle_torus} for a description of $\u{1}^{1,1}$. However, $W_{1,1}^{7}$ does not admit a positively curved normal homogeneous metric (i.e.\ descending from a bi-invariant metric on $\su{3}$). Thus, $b(W_{1,1}^{7})>1 = b((\so{3}\times \su{3})/\u{2})$.
\end{remark}


There are two subclasses of homogeneous spaces, though, for which it is manageable to determine $b(-)$; namely symmetric spaces and spaces of the form $(G\times G\times G)/\Delta G$. 
We will discuss symmetric spaces here. 
For the spaces $(G\times G\times G)/\Delta G$, see the discussion around Theorem~\ref{THM:Wilkings_products} below.

In Section~\ref{SEC:symmetric_spaces} we determine $b(G/K)$ for every symmetric space $G/K$ of compact type. In particular, we list the values $b(G/K)$ for the irreducible symmetric spaces in Table~\ref{table:b_symmetric} at the end of this article. Very recently, during the writing of this paper, Amann, Quast and Zarei obtained the same result independently, and used it to study the higher connectedness of symmetric spaces~\cite{AQZ20}.

In order to determine $b(G/K)$ for each symmetric space $G/K$, the basic observation is that $b(G/K)=\max_{x\in\g{p}\setminus\{0\}} \dim Z_\g{p}(x)$, where $\g{p}$ is the orthogonal complement of $\g{k}$ in $\g{g}$ (the Lie algebras of $K$ and $G$, respectively), and $Z_\g{p}(x)$ is the centralizer of $x$ in $\g{p}$. The centralizers that may have maximal dimension determine some particularly nice class of totally geodesic submanifolds of $G/K$. By standard theory of root systems, one can calculate the dimensions of such totally geodesic submanifolds, thus deriving the value of $b(G/K)$. The proof in \cite{AQZ20} is, in essence, equivalent to ours, the main difference being that we make use of the Dynkin diagrams instead of the explicit description of the roots. 

We now summarize the main facts of our study of symmetric spaces. Throughout this article, $\grp{p}{p+q}{\R}\defeq\so{p+q}/(\so{p}\times\so{q})$ denotes the oriented real Grassmannians, and $\gr{p}{p+q}{\C}\defeq\su{p+q}/\mathsf{S}(\u{p}\times\u{q})$, $\gr{p}{p+q}{\H}\defeq\sp{p+q}/(\sp{p}\times\sp{q})$ the complex and quaternionic Grassmannians. 




\begin{theo}\label{THM:observations_table_symmetric_spaces}
Let $M=G/K$ be a symmetric space of compact type. If $M$ is irreducible, then $b(M)$ is given in Table~\ref{table:b_symmetric} at the end of this article. If $M$ is reducible, and $\widetilde{M}=M_1\times \dots \times M_s$ is the decomposition into irreducible factors of the universal cover of $M$, then $b(M)=\max\{b(M_j)+\dim M- \dim M_j:j=1,\dots,s\}$. 
Moreover:
\begin{enumerate}[\rm(a)]
	\item $b(M)\geq 2\rank M-1$, with equality if and only if $M$ has rank one or its universal cover is $\su{3}/\so{3}$, $\grp{2}{5}{\R}$, $\mathsf{G}_2/\so{4}$ or a finite product of $2$-spheres. In particular, $b(M)$ is never equal to $2$. \label{item:Da}
	
	\item $b(M)\leq \dim M-3$ if and only if $M$ does not locally split off a $2$-sphere, a $3$-sphere, or a Wu space $\su{3}/\so{3}$.\label{item:Db}
	
\item \label{item:Dc} $b(M)\leq (\dim M)/2$ if and only if $M$ is irreducible and either
		\begin{enumerate}[\rm(i)]
			\item $M$ has rank $1$ or $2$ but $M\not\cong\su{3}/\so{3}$, or
			\item $M$ is isomorphic to one of the following spaces of rank $3$ or $4$: $\gr{3}{6}{\C}$, $\gr{3}{6}{\H}$, $\mathsf{E}_7/\mathsf{E}_6\u{1}$, $\so{12}/\u{6}$, $\so{14}/\u{7}$, and any space with Dynkin diagram of $\mathsf{F}_4$ type (i.e.\ $\mathsf{F}_4/\sp{3}\sp{1}$, $\mathsf{E}_6/\su{6}\su{2}$, $\mathsf{E}_7/\spin{12}\sp{1}$, $\mathsf{E}_8/\mathsf{E}_7\sp{1}$, $\mathsf{F}_4$).
		\end{enumerate}
			
\item $b(M)\leq (\dim M)/3$ if and only if either $M$ is of rank $1$ but  $M\not\cong\mathbb{S}^2$, or $M\cong\mathsf{G}_2$.\label{item:Dd}

\item\label{item:De} All irreducible $M$ with $3\leq b(M)\leq 6$ are listed in Table~\ref{table:low_b_symmetric} in Section~\ref{SEC:symmetric_spaces}.

\item\label{item:Df} If $M$ has rank $2$, the value of $b(M)$ is given in Table~\ref{table:b_symmetric_rank2} in Section~\ref{SEC:symmetric_spaces}.
\end{enumerate}
\end{theo}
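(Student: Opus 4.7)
The plan is to reduce the computation of $b(G/K)$ to a purely root-theoretic problem, run through Cartan's classification of compact symmetric spaces to populate Table~\ref{table:b_symmetric}, and then extract the structural statements (a)--(f). The starting point is the formula $b(G/K)=\max_{x\in\g{p}\setminus\{0\}}\dim Z_\g{p}(x)$ recorded just before the theorem. For a symmetric space with a normal metric one has $\sec(x,y)=c\,\|[x,y]\|^2\ge 0$, so a sum $\sum_{i=1}^k\sec(x,e_i)$ can vanish only when each $e_i$ centralizes $x$; this, together with $x\in Z_\g{p}(x)$, gives the formula. Conjugating by $K$ one may place $x$ in a maximal abelian subspace $\g{a}\subset\g{p}$, and the restricted root space decomposition yields
\[
\dim Z_\g{p}(x)=\rank M+\sum_{\alpha\in\Sigma^+ :\, \alpha(x)=0} m_\alpha .
\]
Maximizing over $x\ne 0$ amounts to choosing $x$ on a codimension-one face of a Weyl chamber, and the vanishing set of roots is then the parabolic sub-root system obtained by removing a single node from the Dynkin (equivalently Satake) diagram of the restricted root system $\Sigma$. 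Thus the question reduces to maximizing $\sum m_\alpha$ over a finite list of sub-diagrams.

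To fill in Table~\ref{table:b_symmetric}, I would proceed family by family through Cartan's list of irreducible compact symmetric spaces, reading off restricted root multiplicities from Satake diagrams. For the spaces whose restricted root system is of classical type $\mathsf{A}_r$, $\mathsf{B}_r$, $\mathsf{C}_r$, $\mathsf{D}_r$ or $\mathsf{BC}_r$, the optimum is typically attained by deleting an end node, which yields a subsystem one rank lower whose multiplicities sum in a transparent way. The exceptional restricted types $\mathsf{F}_4$ and $\mathsf{G}_2$ are fewer in number and require direct bookkeeping, but only finitely many cases arise. For reducible $M$ the bracket is block-diagonal, so for $x$ lying entirely in the $j$-th irreducible factor one has $Z_\g{p}(x)=Z_{\g{p}_j}(x)\oplus\bigoplus_{i\ne j}\g{p}_i$; optimizing over $j$ then yields the additive formula $b(M)=\max_j\{b(M_j)+\dim M-\dim M_j\}$.

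Items (a)--(f) should then follow mostly by inspection of the resulting tables. For a uniform proof of (a), removing any end node of the (non-empty) Satake diagram of an irreducible space contributes at least $\rank M-1$ beyond the $\rank M$ coming from $\g{a}$, giving $b(M)\ge 2\rank M-1$; the equality cases are precisely those spaces in which every such deletion gives exactly $\rank M-1$, a list one can pin down by running through rank-one and rank-two diagrams and then passing to the reducible case via the additive formula. Items (b), (d) and (f) are direct consequences of Tables~\ref{table:b_symmetric} and~\ref{table:b_symmetric_rank2}, and (e) is just the extraction of the irreducible entries with $3\le b(M)\le 6$. The subtler item is (c), which requires showing that every irreducible space outside the listed family satisfies $b(M)>(\dim M)/2$; this is a uniform inequality $\sum_{\alpha\in(\Sigma')^+}m_\alpha > \tfrac{1}{2}\dim\g{p}-\rank M$ to be verified family by family, supplemented by a direct calculation for the short exceptional list. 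The main obstacle is the combinatorial bookkeeping, especially for $\mathsf{BC}_r$-type restricted systems (where long and short roots carry different multiplicities and care is needed to pick the optimal deletion) and for the exceptional symmetric spaces (where several sub-diagrams give close values of $\sum m_\alpha$); a secondary point is confirming that the choice of normal metric does not affect $b$, which on an irreducible space is immediate since any two normal metrics differ by a positive scalar.
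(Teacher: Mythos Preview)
Your approach is essentially the same as the paper's: reduce to $b(M)=\max_x\dim Z_\g{p}(x)$, place $x$ in a maximal abelian $\g{a}$, use the restricted root decomposition to see that the maximum is attained by deleting a single node from the Dynkin diagram, and then run through Cartan's list case by case to build Table~\ref{table:b_symmetric}; the reducible formula and item~(a) are argued exactly as you outline.

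Two small points where the paper differs from your sketch. First, for item~(b) the paper does \emph{not} read it off the tables: it gives a short root-system argument analysing directly when $\dim\g{p}-\dim Z_\g{p}(x)\in\{1,2\}$, which forces $\Sigma$ to split off a rank-one (multiplicity $1$ or $2$) or an $\mathsf{A}_2$-type (multiplicity $1$) factor, yielding the $\mathbb{S}^2$, $\mathbb{S}^3$, $\su{3}/\so{3}$ characterization without case-by-case inspection. Second, for items~(c) and~(d) you only address the irreducible side; the paper includes a brief but necessary argument, via the product formula, that no reducible $M$ can satisfy $b(M)\le(\dim M)/2$ (one deduces $\dim M_j\ge(\dim M)/2$ for every factor, forcing $s=2$ with equal-dimensional factors and then a contradiction). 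You should add that step before restricting to irreducible spaces.
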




We remark that if we consider general homogeneous (non-symmetric) metrics $g$ and replace $b(M)$ with the minimal $k$ for which $g$ satisfies $\Ric_k>0$, then Items~\eqref{item:Da} to \eqref{item:Dd} in Theorem~\ref{THM:observations_table_symmetric_spaces} would not hold. For example, the rank $2$ reducible symmetric space $\sph^3\times\sph^3$ admits a homogeneous metric of $\Ric_2>0$; see Remark~\ref{REM:b_depends_on_pair_2}. We also note that Item \eqref{item:Db} does not hold for normal metrics on homogeneous spaces. Indeed, in Subsection~\ref{SEC:Witte_spaces}, we will observe that every odd dimension $2n+3\geq 7$ contains infinitely many homogeneous spaces $G/H$ of distinct homotopy type with $b(G/H)=\dim G/H -2$ that do not locally split off any factor. Moreover, building upon the results in \cite{DKT18}, we conclude that for many of them, the moduli space  of metrics of $\Ric_{\dim G/H -2}>0$ has infinitely many connected components.



We now discuss normal metrics on homogeneous spaces of the form $(G\times G\times G)/\Delta G$, where $G$ is a compact semisimple Lie group. The motivation for considering these spaces comes from a construction of Burkhard Wilking, shared with the third author in a personal communication, that the Cheeger deformation of the product of round metrics on $\sph^3\times\sph^3$ via the diagonal action by $\Delta\sph^3 <\sph^3\times\sph^3$ is of $\Ric_2>0$. We refer the reader unfamiliar with Cheeger deformations to \cite[Section~2]{Zi07}.
Cheeger deformations do not generally result in homogeneous metrics, however in this particular case Wilking's metric on $\sph^3\times\sph^3$ is isometric to a normal homogeneous metric on $(\sph^3\times\sph^3\times\sph^3)/\Delta \sph^3$. His construction generalizes as follows.

\begin{theo}\label{THM:Wilkings_products}
For any compact semisimple Lie group $G$, it holds that:
\[
b\left(\frac{G\times G\times G}{\Delta G}\right)=2b(G).
\]
Consequently, $G\times G$ admits a metric of $\Ric_{k}>0$ for $k=2b(G)$ which is left-invariant and invariant under right $\Delta G$-diagonal multiplication.
\end{theo}

We will explore some consequences of the resulting metric on $G\times G$ in Subsection~\ref{SS:Ric2}. Observe that the spaces $M\defeq G^3/\Delta G$ for which $b(M)\leq (\dim M)/2$ are precisely those for which $b(G)\leq (\dim G)/2$. From Theorem~\ref{THM:observations_table_symmetric_spaces}, we know that the only Lie groups satisfying $b(G)\leq(\dim G)/2$ are the simple ones of rank $\leq 2$ and $\mathsf{F}_4$. The particular case $G=\su{2}\cong\sph^3$ is very remarkable, in that it yields a normal metric of $\Ric_2>0$.

\begin{remark}\label{REM:b_depends_on_pair_2}
The homogeneous space $(\sph^3\times\sph^3\times\sph^3)/\Delta \sph^3$ is the only simply connected one we know of that satisfies $b(-)=2$; compare to Item \eqref{item:Da} in Theorem~\ref{THM:observations_table_symmetric_spaces}. Observe that normal homogeneous metrics on $\sph^3\times\sph^3$ do not satisfy $\Ric_2>0$. Indeed, they are products of round metrics, and hence $b(\sph^3\times\sph^3)=\dim\sph^3 +1=4$ (cf.~Proposition~\ref{prop:products_homogeneous}).
\end{remark}

\subsection{Fat homogeneous bundles}\label{SS:fat_homogeneous_bundles}

After the compact rank one symmetric spaces and the Berger spaces $B^7$, $B^{13}$, the next examples of closed manifolds of $\sec>0$ were constructed by Wallach in 1972 \cite{Wa72}; namely the spaces $W^6\defeq\su{3}/\u{1}^2$, $W^{12}\defeq\sp{3}/\sp{1}^3$ and $W^{24}\defeq\mathsf{F}_4/\spin{8}$ now known as the \textit{Wallach flag manifolds}. Their common feature is that they arise as total spaces of \emph{fat homogeneous bundles} over compact rank one symmetric spaces. We refer the reader unfamiliar with fat homogeneous bundles to Section~\ref{SEC:fat_bundles} for definitions and references. Throughout this article we understand a metric on $G/H$ to be homogeneous if it is $G$-invariant, i.e.~our definition depends on the pair $(G,H)$.

Wallach's construction can be stated as follows: if $H<K<G$ is a nested triple of compact Lie groups such that the induced bundle $K/H\to G/H\to G/K$ is fat and $G/K$ is a rank one symmetric space, then $G/H$ admits a homogeneous metric of $\sec>0$ \cite[Proposition~4.3]{Zi07}. This approach was subsequently used by Aloff and Wallach in \cite{AW75} to construct homogeneous metrics of $\sec>0$ on the spaces $W_{p,q}^7$; see \eqref{eq:Wpq}. This family completes the list of simply connected homogeneous manifolds admitting a metric of $\sec>0$, up to diffeomorphism \cite{WZ18}. Observe that Wallach's construction results in metrics with ``more'' curvature than the corresponding normal metrics since, by Berger's classification, none of the spaces $W^6$, $W^{12}$, $W^{24}$, $W_{p,q}^7$ admit normal homogeneous metrics of $\sec>0$, with a subtle exception in the case of $W_{1,1}^7$; see Remark \ref{REM:b_depends_on_pair}.

Wallach's construction has been generalized to several other curvature conditions (such as almost positive curvature \cite{Wi02}, quasi-positive curvature \cite{Ta03,KT14} or strongly positive curvature \cite{BM18}). Here, we present the following generalization of Wallach's result:

\begin{theo}\label{THM:Wallach_generalization}
Let $K/H\to G/H\to G/K$ be a fat homogeneous bundle with $G/K$ a symmetric space. Then $G/H$ admits a homogeneous metric of $\Ric_k>0$ for $k= b(G/K)$.
\end{theo}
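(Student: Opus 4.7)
\emph{Proof plan.} Fix a bi-invariant inner product $Q$ on $\g{g}$ and let $\g{g}=\g{k}\oplus\g{p}$ and $\g{k}=\g{h}\oplus\g{m}$ be the associated $Q$-orthogonal splittings. Let $g_{t}$, $t>0$, denote the Cheeger deformation of the normal homogeneous metric on $G/H$ along the left $K$-action. The plan is to show that, for $t$ sufficiently large, $g_{t}$ satisfies $\Ric_k>0$ with $k=b(G/K)$. The key feature of $g_t$ is that the canonical projection $\pi\colon (G/H,g_t)\to (G/K,Q|_{\g{p}})$ is a Riemannian submersion whose totally geodesic fibers are isometric, up to scale, to the normal homogeneous space $(K/H,Q|_{\g{m}})$, and whose horizontal distribution is carried by $\g{p}$ with unchanged metric $Q|_{\g{p}}$.

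The core ingredient is a curvature dichotomy, due in essence to Wallach~\cite{Wa72}. Combining the Gray--O'Neill formula for a Riemannian submersion with totally geodesic fibers and the standard expression for sectional curvature of a normal homogeneous metric, one checks that for every orthonormal pair $(u,v)$ at $[eH]$, the sectional curvature $\sec_{g_t}(u,v)$ splits as a sum of three non-negative ingredients (base term $\sec_{G/K}(\pi_*u,\pi_*v)$, fiber term, and a positive $A$-tensor term of the form $c_t\|[u^{\g{p}},v^{\g{p}}]_{\g{m}}\|_Q^2$) plus cross-terms between the horizontal and vertical components of $u,v$ that are absorbed by the $A$-tensor once $t$ is large enough. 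The base and fiber terms are non-negative because $Q|_{\g{p}}$ is the symmetric-space metric on $G/K$ and $Q|_{\g{m}}$ the normal homogeneous metric on $K/H$, while the fatness hypothesis makes the $A$-tensor contribution strictly positive whenever $(u,v)$ has a non-trivial mixed horizontal/vertical part. The upshot is a dichotomy: $\sec_{g_t}(u,v)\geq 0$ for every orthonormal pair, with equality only if both $u$ and $v$ lie in $\g{p}$ and $\sec_{G/K}(\pi_*u,\pi_*v)=0$.

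Granted this dichotomy, the theorem is immediate. Let $x,e_1,\ldots,e_k\in T_{[eH]}(G/H)$ be orthonormal with $k=b(G/K)$, and suppose toward a contradiction that $\sum_{i=1}^k\sec_{g_t}(x,e_i)=0$. Non-negativity forces each summand to vanish, and the equality case then forces $x$ and every $e_i$ to be horizontal with $\sec_{G/K}(\pi_*x,\pi_*e_i)=0$. Since $\pi_*$ restricts to an isometry from $(\g{p},Q|_{\g{p}})$ onto $T_{[eK]}(G/K)$, the vectors $\pi_*x,\pi_*e_1,\ldots,\pi_*e_k$ form an orthonormal $(k+1)$-frame in $G/K$ along which all $k$ sectional curvatures starting from $\pi_*x$ vanish, directly contradicting the minimality defining $k=b(G/K)$. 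Hence $\sum_{i=1}^k\sec_{g_t}(x,e_i)>0$ and $(G/H,g_t)$ has $\Ric_k>0$.

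The main obstacle I anticipate is the clean verification of the dichotomy on mixed 2-planes: expanding $R_{g_t}(u,v)v$ via Gray--O'Neill gives several cross-terms between the horizontal and vertical components of $u$ and $v$, and one must check that, after tracking the Cheeger rescaling on $\g{m}$, these are dominated by the positive $A$-tensor contribution for $t$ large. This is an entirely local computation, structurally identical to the one carried out by Wallach in the case of a compact rank one base~\cite[Proposition~4.3]{Zi07}: it never uses positivity of $\sec_{G/K}$, only non-negativity of the base and fiber sectional curvatures together with the fatness of the bundle. Wallach's argument thus transfers to the more general symmetric base with only cosmetic changes, supplies the required dichotomy, and completes the proof.
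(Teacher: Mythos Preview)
Your overall strategy and the stated dichotomy---that a $g_t$-flat plane must lie in $\g{p}$ and project to a flat plane of $G/K$---are correct, and once the dichotomy is in hand your derivation of $\Ric_k>0$ is immediate (and arguably tidier than the paper's case analysis). The paper uses the same metric and reaches the same endpoint, but rather than isolating the dichotomy it invokes Eschenburg's theorem directly: for a symmetric pair $(G,K)$ and \emph{any} $t>0$, a plane $\spann\{x,y\}\subset\g{g}$ is $\langle\cdot,\cdot\rangle_t$-flat if and only if $[x,y]=[x_\g{k},y_\g{k}]=[x_\g{p},y_\g{p}]=0$. Combined with B\'erard-Bergery's lemma (fatness forces commuting vectors in $\g{m}$ to be proportional), this yields your dichotomy in a few lines and for every $t>0$, after which the paper runs a short case analysis on the $\g{p}$- and $\g{m}$-components of the offending frame.

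The gap in your proposal is precisely the justification of the dichotomy. You appeal to a Gray--O'Neill expansion and claim Wallach's computation ``transfers with only cosmetic changes'', but you neither carry this out nor name Eschenburg's theorem, which is the actual mechanism behind the reference you cite. Without it, controlling the cross-terms is genuinely delicate, and your assertion that the fiber term is merely ``non-negative'' is insufficient: to rule out flat vertical or mixed planes you need that any two commuting vectors in $\g{m}$ are proportional, which is B\'erard-Bergery's lemma---a nontrivial consequence of fatness that you never invoke. Two smaller points: the relevant deformation is the Cheeger deformation of the bi-invariant metric on $G$ along the \emph{right} $K$-action, then descended to $G/H$ (the left $K$-action on $G/H$ does not shrink the fibers of $G/H\to G/K$); and via Eschenburg the result holds for every $t>0$, not only for $t$ large.
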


Fat homogeneous bundles are very rigid; indeed, a full classification of them was established by Bérard-Bergery \cite{Be75}. In Subsection~\ref{SS:fat_bundles_examples}, we examine fat homogeneous bundles whose base space $G/K$ is symmetric and satisfies $b(G/K)\leq (\dim G/K)/2$. By Theorem~\ref{THM:Wallach_generalization}, the corresponding total spaces $G/H$ admit homogeneous metrics of $\Ric_k>0$ for some $k< (\dim G/H)/2$. Theorem~\ref{THM:homogeneous_spaces_fat} provides a list of such total spaces $G/H$ and the $k$ for which $\Ric_k>0$. Here we only discuss in detail the necessary cases for the proof of Theorem~\ref{THM:main_theorem_existence_short}. The following triples induce fat homogeneous bundles:
\begin{equation}\label{EQ:fat_bundle_Aloff_Wallach}
\sss (\u{1}^{p,q}\times \u{n-1})  < \sss (\u{2}\times \u{n-1})< \su{n+1}, \qquad n\geq 2.
\end{equation}
The inclusions are induced by standard diagonal block embeddings, except for the first one, where $p,q$ are coprime integers with $pq>0$, and
\begin{equation}\label{EQ:circle_torus}
\u{1}^{p,q}\defeq\{ (z^p,z^q) : z\in \u{1}\}<\u{1}\times\u{1}.
\end{equation}
The corresponding bundles have the Grassmannian $\gr{2}{n+1}{\C}\defeq\su{n+1}/\sss (\u{2}\times \u{n-1})$ as base space, and the respective total spaces are the \emph{generalized Aloff-Wallach spaces}
\begin{equation}\label{eq:Wpq}
	W_{p,q}^{4n-1}\defeq \su{n+1}/\sss(\u{1}^{p,q}\times \u{n-1}).
\end{equation}
Note that $W_{1,1}^{4n-1}$ equals the unit tangent bundle $T^1\CP^n$. Applying Theorem~\ref{THM:Wallach_generalization}, in combination with the value of $b(\gr{2}{n+1}{\C})$ from Table~\ref{table:b_symmetric_rank2} in Section~\ref{SEC:symmetric_spaces}, we obtain the first row of Item \eqref{item:Fa} in Theorem~\ref{THM:homogeneous_simple_version} below.

The last four rows of Item \eqref{item:Fa} in Theorem~\ref{THM:homogeneous_simple_version} below correspond to other infinite series of manifolds that arise as total spaces of fat homogeneous bundles (let $n\geq 2$): 
\begin{itemize}
\item the projectivized tangent bundle $\mathbb{P}_\CC T\CP^n\defeq \su{n+1}/\sss(\u{1}\times \u{1}\times \u{n-1})$ of $\CP^n$,
\item $\su{n+1}/(\su{2}\times\su{n-1})$, which can be described as the total space of a circle bundle over $\gr{2}{n+1}{\C}$ whose Euler class is a generator of $H^2(\gr{2}{n+1}{\C};\ZZ)= \ZZ$,
\item the projectivized tangent bundle $\mathbb{P}_\HH T\HP^n\defeq \sp{n+1}/(\sp{1}^2\times\sp{n-1})$ of $\HP^n$, and
\item the unit tangent bundle $T^1\sph^{n}\defeq \so{n+1}/\so{n-1}$ of the sphere $\sph^n$. 
\end{itemize}
Observe that $\mathbb{P}_\CC T\CP^2$ and $\mathbb{P}_\HH T\HP^2$ coincide with the Wallach spaces $W^6$ and $W^{12}$ (and moreover recall that the projectivized tangent bundle of the Cayley plane coincides with $W^{24}$). We refer to Subsection~\ref{SS:fat_bundles_examples} and Theorem~\ref{THM:homogeneous_spaces_fat} for the corresponding constructions and values, from where Item \eqref{item:Fb} below can be likewise extracted.

\begin{theo}\label{THM:homogeneous_simple_version}
The following hold:
\begin{enumerate}[\rm(a)]

\item Each homogeneous space $G/H$ in the following table carries a homogeneous metric of $\Ric_k>0$ for the corresponding value of $k$, where $n\geq 2$:\label{item:Fa}
\renewcommand{\arraystretch}{1.4}
\begin{center}
$
\begin{array}{c@{\quad}c@{\quad}c@{\quad}c} 
\hline
\multirow{2}{*}{\vspace{1ex}$G/H$} & \multirow{2}{*}{\vspace{1ex}$\dim G/H$} &  \multicolumn{2}{c}{k}     \\[-1.5ex] 
&&{\scriptstyle(n\neq 3)} & {\scriptstyle(n= 3)}
\\  
\hline
W_{p,q}^{4n-1}  & 4n-1  &  2n-3 & 4   \\ 
\hline 
\mathbb{P}_\CC T\CP^n   & 4n-2  &  2n-3  &   4\\ 
\hline
\su{n+1}/(\su{2}\times\su{n-1}) & 4n-3 & 2n-3 &   4 \\ 
\hline
\mathbb{P}_\HH T\HP^n   & 8n-4  & 4n-7 & 6  \\ 
\hline 
T^1\sph^{n} & 2n-1  &  \multicolumn{2}{c}{n-1}  \\
\hline
\end{array}
$
\vspace{1ex}
\end{center}




\item The following $G/H$ carry homogeneous metrics of $\Ric_k>0$ for some $k\leq(\dim G/H)/3$:

 \label{item:Fb}
$W_{p,q}^{15}$, $\su{6}/\sss (\u{2}\times \sp{2})$, $\mathsf{G}_2/\su{2}^{\pm}$, $\mathsf{G}_2/\u{2}^{\pm}$, $\mathsf{E}_6/\spin{10}$, $\so{10}/\su{5}$, $\mathbb{P}_\HH T\HP^3$ and $\mathbb{P}_\HH T\HP^4$. 
\end{enumerate}
\end{theo}

The cohomology ring of $W_{p,q}^{4n-1}$ can be computed using standard techniques, see Subsection~\ref{SS:fat_bundles_topology}. In particular, $W_{p,q}^{4n-1}$ is simply connected and $H^{2n}(W_{p,q}^{4n-1};\ZZ)$ is finite of order equal to $ \vert p^n+p^{n-1}q+\dots + pq^{n-1}+q^n\vert $; see Lemma~\ref{LEM:topology_Aloff_Wallach} or \cite[p.~119]{Wi02}. Hence by varying $p,q$ appropriately, we obtain infinitely many spaces of pairwise distinct homotopy type. This, along with the first row of Theorem~\ref{THM:homogeneous_simple_version}~\eqref{item:Fa}, implies Theorem \ref{THM:main_theorem_existence_short}. We remark that Wilking (resp.\ Tapp) constructed on each $W_{p,q}^{4n-1}$ with $pq<0$ (resp.\ $pq\neq 0$) inhomogeneous metrics of positive sectional curvature on an open dense subset \cite{Wi02} (resp.\ of non-negative sectional curvature and positive at a point \cite{Ta03}).
%

The unit tangent bundle $T^1\sph^n$ is especially interesting in the cases $n=3,7$, since the corresponding spheres $\sph^n$ are parallelizable and hence $T^1\sph^n$ is diffeomorphic to $\sph^{n-1}\times\sph^n$. Observe that the corresponding metrics of $\Ric_k>0$ with $k=2,6$ given by Theorem~\ref{THM:homogeneous_simple_version} improve the standard product metrics, which are only of $\Ric_k>0$ for $k=4,8$, respectively. The case $n=3$ is somehow exceptional since the metric given by the fat bundle construction is only of $\Ric_3>0$ (cf.~Theorem~\ref{THM:homogeneous_spaces_fat}). However,  $T^1\sph^3=\so{4}/\so{2}$ is $\sp{1}\times\sp{1}$-equivariantly diffeomorphic to the homogeneous space $(\sph^3\times\sph^3)/\Delta\sph^1$, which does admit a metric of $\Ric_2>0$, as it will be discussed in Remark~\ref{rem:products_spheres}; see~Remark~\ref{rem:T1S3} for details.


The spaces $\mathsf{G}_2/\su{2}^-$ and $\mathsf{G}_2/\u{2}^-$ appearing in Item \eqref{item:Fb} carry metrics of $\Ric_3>0$ by Theorem~\ref{THM:homogeneous_spaces_fat}, and are diffeomorphic to the unit tangent bundle $T^1\sph^6$ (for which the homogeneous metric given by Theorem~\ref{THM:homogeneous_simple_version} is ``only'' of $\Ric_5>0$) and to $\grp{2}{7}{\R}$, respectively \cite{KZ17}. The spaces $\su{3}/\so{3}$, $\grp{2}{5}{\R}$, $\grp{2}{7}{\R}$, and $\mathsf{G}_2/\so{4}$ are the only (simply connected) irreducible symmetric spaces of rank $>1$ we know of to admit metrics of $\Ric_3>0$; see Table~\ref{table:low_b_symmetric} in Section~\ref{SEC:symmetric_spaces}. 
It is an open question whether there exists an irreducible symmetric space of rank $>1$ which admits a metric of $\Ric_2>0$, much less of $\sec >0$ (see \cite{DN20} for the construction of a metric on $\grp{2}{7}{\R}$ of $\sec >0$ on an open dense subset).

\subsection{Isometric quotients and manifolds of $\mathrm{Ric}_2>0$}\label{SS:Ric2}


The metrics considered in the previous subsections have considerably large isometry groups, and hence their quotients constitute a large class of examples.
It is straightforward to see that quotient spaces of large enough dimension inherit metrics of positive intermediate Ricci curvature by O'Neill's curvature equations for Riemannian submersions; see Corollary~\ref{COR:Riem_sub}.
Also recall that Synge's Theorem states that if a closed manifold is orientable, not simply connected, and even-dimensional, or if it is non-orientable and odd-dimensional, then it cannot admit a metric of $\sec>0$.
So by taking appropriate quotients of previous examples, we will now establish Theorem~\ref{THM:main_theorem_quotients} using Synge's Theorem.


\begin{proof}[Proof of Theorem~\ref{THM:main_theorem_quotients}]
	For any $n\geq 3$, the Grassmannian $\gr{2}{n+2}{\R}=\so{n+2}/\sss(\oo{2}\times\oo{n})$ of (non-oriented) $2$-planes in $\R^{n+2}$ is an irreducible symmetric space of dimension $2n$ and rank $2$. It is a $\mathbb{Z}_2$-quotient of the oriented $2$-plane Grassmannian $\grp{2}{n+2}{\R}$. By Theorem~\ref{THM:observations_table_symmetric_spaces} (cf.~Table~\ref{table:b_symmetric_rank2}), its symmetric metric is of $\Ric_{k}>0$, for $k=n =(\dim \gr{2}{n+2}{\R})/2$, whenever $n\geq 3$. It is well known that  $\gr{2}{n+2}{\R}$ is orientable if and only if $n$ is even (this follows, for example, from a characterization of the orientability of homogeneous spaces~\cite[p.~275]{Ma72}). Thus, for each even $n\geq 4$, the orientable but non-simply connected manifold $\gr{2}{n+2}{\R}$ cannot admit a Riemannian metric of $\sec>0$, by Synge's Theorem. This shows Theorem~\ref{THM:main_theorem_quotients} for dimensions~$\geq 8$ that are multiples~of~$4$.
	
	The unit tangent bundle $T^1\sph^{n}=\so{n+1}/\so{n-1}$ induces a Riemannian covering over the projectivized tangent bundle $\mathbb{P}_\RR T\RP^n\defeq \so{n+1}/\mathsf{S}(\oo{1}^2\times\oo{n-1})$ of $\RP^n$. The latter inherits a homogeneous metric of $\Ric_k>0$ with the same $k$ as $T^1\sph^{n}$, namely $k=n-1<(\dim\mathbb{P}_\RR T\RP^n)/2$. Recall that $\mathbb{P}_\RR T\RP^n$ is non-orientable if $n$ is odd since it fits into a smooth fiber bundle $\RP^{n-1}\to\mathbb{P}_\RR T\RP^n\to\RP^n$ whose fiber is non-orientable. Thus $\mathbb{P}_\RR T\RP^n$ does not admit metrics of $\sec >0$ by Synge's Theorem. This proves Theorem~\ref{THM:main_theorem_quotients} for dimensions $\geq 5$ congruent to $1\,\Mod 4$. 
	
	The case of dimension $4$ in Theorem~\ref{THM:main_theorem_quotients} follows from the metric of $\Ric_2 >0$ on $\so{3}\times\so{3}$  resulting from Theorem~\ref{THM:Wilkings_products}. The quotient of $\so{3}\times\so{3}$ under the left action by the product subgroup $\oo{2}\times\oo{2}$ is diffeomorphic to $\RP^2\times\RP^2$. Then $\RP^2\times\RP^2$ inherits a metric of $\Ric_2>0$ by Corollary~\ref{COR:Riem_sub}, but cannot admit a metric of $\sec >0$.
\end{proof}

\begin{remark}\label{rem:products_spheres}
	The Wilking metric of $\Ric_2>0$ on $\sph^3\times\sph^3$ is left-invariant and right $\Delta\sph^3$-invariant, see Theorem~\ref{THM:Wilkings_products}. By Corollary~\ref{COR:Riem_sub} it induces metrics of $\Ric_2>0$ on $\sph^2\times\sph^2$ and $\sph^2\times\sph^3$, by taking the quotients of the left action of a maximal torus of $\sph^3\times\sph^3$, and of the right $\Delta \sph^1$-action, respectively. 
	The manifolds $\sph^2\times\sph^2$, $\sph^2\times\sph^3$, and $\sph^3\times\sph^3$ are the only simply connected ones we know of to admit metrics of $\Ric_2>0$, besides those of $\sec >0$.
\end{remark}

The approach followed above can be extended to other homogeneous spaces of low dimensions to generate more examples of manifolds admitting metrics of $\Ric_k>0$ but not $\sec>0$. Specifically, one can consider bi-invariant metrics on a compact simple Lie group~$G$, as well as the homogeneous metrics on $G^2$ given in Theorem~\ref{THM:Wilkings_products}. Both have large groups of isometries acting freely, and thus give rise to many isometric quotients. We can, for instance, consider subgroups $H$ of $G$ or $G^2$ of the form $H=S\cdot \Gamma$, where $S$ is a closed connected subgroup with $\rank S < \rank G$, and $\Gamma$ is a finite subgroup of a torus $\mathbb{T}\subset G$ orthogonal to $S$. If we restrict to quotients $G/H$ or $G^2/H$ of even dimension, and take subgroups $\Gamma$ such that $H$ has at least $3$ connected components, then the quotients cannot admit metrics of $\sec>0$ by Synge's Theorem. In order to obtain quotients of $\Ric_k>0$ where $k$ is at most half of their dimensions, by Theorem~\ref{THM:observations_table_symmetric_spaces} we have to restrict to simple Lie groups of rank $\leq 2$ and to $\mathsf{F}_4$, and their squares. Thus, for example, taking $S$ a (possibly trivial) even-dimensional torus in the groups $G=\su{3}$, $\sp{2}$, $\mathsf{G}_2$, $\mathsf{F}_4$ with bi-invariant metrics, we obtain examples in dimensions $8$, $10$, $14$, $50$ and $52$. We can argue similarly with squares $G^2$ endowed with the metrics from Theorem~\ref{THM:Wilkings_products}, thus obtaining examples in many even dimensions $\leq 104=2\dim \mathsf{F}_4$. The case of the group $\mathsf{F}_4\times\mathsf{F}_4$ is rather interesting, in that it allows to show that each finite subgroup of the $8$-torus can be realized as the fundamental group of a closed $n$-manifold of $\Ric_k>0$ for some $k<n/2$. Indeed, $\mathsf{F}_4\times\mathsf{F}_4$ carries a homogeneous metric of $\Ric_{44}>0$ (by Theorem~\ref{THM:Wilkings_products} and Table~\ref{table:b_symmetric}). Taking the quotient under any finite subgroup $\Gamma$ of a maximal torus $\mathbb{T}^8$ we obtain a manifold of dimension $104$ with fundamental group equal to $\Gamma$ and a metric~of~$\Ric_{44}>0$.

We conclude this subsection with an observation regarding Riemannian submersions and the Wilking metric of $\Ric_2>0$ on $\mathbb{S}^3\times\mathbb{S}^3$ discussed above Theorem~\ref{THM:Wilkings_products}. The right action by $\Delta \sph^3$ on $\sph^3\times\sph^3$ induces a metric on $(\sph^3\times\sph^3)/\Delta \sph^3$ that inherits the left action by $\sph^3\times\sph^3$. As such it is a homogeneous metric (i.e. $\sph^3\times\sph^3$-invariant), which must be normal since $(\sph^3\times\sph^3)/\Delta \sph^3$ is an irreducible symmetric pair. Under the natural diffeomorphism $(\sph^3\times\sph^3)/\Delta \sph^3\cong \sph^3$ (see Remark~\ref{REM:bG_equals_bGxG_DeltaG}) the metric becomes bi-invariant on $\sph^3$, which is round and hence submerges onto a metric $\bar g$ of constant curvature on $\sph^2$. Altogether, we have a Riemannian submersion from Wilking's metric $g$:
\begin{equation}\label{EQ:submersion_S3S3_S2}
(\sph^3\times\sph^3,g) \to (\sph^2,\bar g)\quad \text{ with } \Ric_2(g)>0.
\end{equation}
	Riemannian submersions between closed manifolds satisfying certain curvature bounds are challenging to construct, and it is expected that there might be topological obstructions for their existence. 
	In this context, the Petersen-Wilhelm Conjecture suggests that any Riemannian submersion $M\to B$ defined on a positively curved manifold $M$ with fibers $F$ must have $\dim F <\dim B$; see \cite{AK15,GG16,Sp18,GR18} for further information. 
	Although this conjecture is still open, it is natural to ask whether one should expect an analogous fiber dimension bound for Riemannian submersions defined on manifolds with $\Ric_k>0$.
	More precisely, for each $k\geq 1$ one could ask if there exists a positive number $f(k)$ such that if $\Ric_k (M) > 0$, then $\dim F < f(k) \dim B$. 
	In this language, the Petersen-Wilhelm Conjecture suggests that $f(1)$ exists and is equal to $1$. The submersion in \eqref{EQ:submersion_S3S3_S2} shows that if $f(2)$ exists, then its value is strictly greater than $2$.

\subsection*{Structure of the article}

In Section~\ref{SEC:normal_homogeneous_spaces} we consider normal homogeneous spaces and give the proofs of Theorems~\ref{THM:extension_Nash_Ber} and~\ref{THM:Wilkings_products}. Section~\ref{SEC:symmetric_spaces} is devoted to the  computation of $b(G/K)$ for any symmetric space $G/K$ of compact type, and contains the proof of Theorem~\ref{THM:observations_table_symmetric_spaces}. In Section~\ref{SEC:fat_bundles}, we review fat homogeneous bundles, we provide the proof of Theorem~\ref{THM:Wallach_generalization}, we discuss various families of fat homogeneous bundles, from where Theorem~\ref{THM:homogeneous_simple_version} follows, and we prove the topological claims in Theorem~\ref{THM:main_theorem_existence_short}. Finally, we include Table~\ref{table:b_symmetric} with the values of $b(G/K)$ for all irreducible compact symmetric spaces $G/K$ at the end of the article.

\subsection*{Acknowledgments}

We are grateful to Fernando Galaz-Garc\'ia, Luis Guijarro, Lorenz Schwachh\"ofer and Fred Wilhelm for helpful correspondence.
We also thank Anand Dessai and Lee Kennard for useful comments on a preliminary version of this article.
In particular, we thank Lee Kennard for helping us detect an issue with a previous version of this article.

\section{Background \& Context}\label{S:preliminaries}

	In this section, we give a general overview of basic concepts related to intermediate Ricci curvature. 
	Our goal is to compensate for the fact that there is limited existing literature on intermediate Ricci curvature.
	At the end of this section, we also include a discussion of other different intermediate curvature conditions that others have studied.

\subsection{Definition and basic constructions}
	
	Here, we provide a more general definition of $\Ric_k$, namely as a map defined on flags that, when it is required to be positive, restricts to the definition given at the beginning of this article.
	To that end, let $\mathrm{Fl}(1,k+1;TM)$ denote the \textit{partial flag bundle} consisting of signature-$(1,k+1)$ flags tangent to $M$. 
	In other words, elements of $\mathrm{Fl}(1,k+1;TM)$ are pairs $(\mathcal{V}^1,\mathcal{V}^{k+1})$ such that $\mathcal{V}^1$ is a $1$-dimensional subspace of a $(k+1)$-dimensional subspace $\mathcal{V}^{k+1}$ of a tangent space $T_pM$ for some $p\in M$.
	Given a Riemannian metric on $M$, let $\nabla$ and $R$ denote the associated Levi-Civita connection and curvature tensor, respectively.
	For any vector $x\in T_pM$, the \textit{directional curvature operator} or \textit{Jacobi operator} $R_x\colon T_pM\to T_pM$ is then defined by
	\[
		R_x(y) \defeq R(y,x)x = \nabla_Y\nabla_X X - \nabla_X \nabla_Y X - \nabla_{[Y,X]}X,
	\]
	where $X$ and $Y$ are any extensions of $x$ and $y$ to smooth vector fields, respectively.
	
	\begin{definition}
		Suppose $(M,g)$ is a Riemannian manifold and $k\in\{1,\dots,\dim M-1\}$. 
		Given $p\in M$, a $(k+1)$-dimensional subspace $\mathcal{V}^{k+1}\subset T_pM$, and a vector $x\in \mathcal{V}^{k+1}$, let $R_x^{\mathcal{V}^{k+1}}$ denote the directional curvature operator restricted to $\mathcal{V}^{k+1}$ composed with the orthogonal projection onto $\mathcal{V}^{k+1}$. 
		Then $\Ric_k\colon \mathrm{Fl}(1,k+1;TM)\to\mathbb{R}$ is defined by
		\[
			\Ric_k(\mathcal{V}^1,\mathcal{V}^{k+1})\defeq\trace\big(R_x^{\mathcal{V}^{k+1}}\big)=\sum_{i=1}^k\sec(x,e_i),
		\]
		where $x$ is any unit vector in the line $\mathcal{V}^1$, and $e_1,\dots,e_k$ is any orthonormal basis for the orthogonal complement of $\mathcal{V}^1$ in $\mathcal{V}^{k+1}$. 
	\end{definition}
	
	Given a signature-$(1,2,n)$ flag $\mathcal{V}^1\subset \mathcal{V}^2\subset \mathcal{V}^n$ tangent to an $n$-manifold, notice that $\Ric_1(\mathcal{V}^1,\mathcal{V}^{2})$ is the sectional curvature of the $2$-plane $\mathcal{V}^{2}$, and $\Ric_{n-1}(\mathcal{V}^1,\mathcal{V}^n)$ is the Ricci curvature $\Ric(x,x)$ for either choice of unit vector $x\in\mathcal{V}^1$. 
	Furthermore, notice that this notion of $\Ric_k$ is well-defined because the value of $\trace(R_x^{\mathcal{V}^{k+1}})=\sum_{i=1}^k\sec(x,e_i)$ is independent of the choice of unit vector $x\in \mathcal{V}^1\subset \mathcal{V}^{k+1}$ or orthonormal vectors $e_1,\dots,e_k\in\mathcal{V}^{k+1}\cap (\mathcal{V}^1)^\perp$.
	
	Riemannian products give us an elementary way of constructing manifolds of $\Ric_k>0$ from known examples. 
	Since the Levi-Civita connection of a Riemannian product splits as the sum of the Levi-Civita of both factors, it is straightforward to check the following:
	
	\begin{proposition}\label{PROP:products}
		Suppose $(M_1,g_1)$ and $(M_2,g_2)$ are Riemannian manifolds such that $\Ric_{k_i}(M_i,g_i)>0$ for some $k_i\in\{1,\dots,\dim(M_i)-1\}$. 
		Then their Riemannian product $(M_1\times M_2,g_1+g_2)$ is of $\Ric_k>0$ for $k=\max\{k_1+\dim(M_2),k_2+\dim(M_1)\}$.
	\end{proposition}
	
	Riemannian submersions give another source of examples of manifolds with $\Ric_k>0$. An important subclass in this article is given by quotients of free, isometric actions by compact Lie groups. 
	It follows from O'Neill's horizontal curvature equation that a Riemannian submersion $\pi\colon (M,g_M)\to (B,g_B)$ satisfies $\sec_B(d\pi(x),d\pi(y))\geq \sec_M(x,y)$ for all orthonormal vectors $x,y$ in the horizontal distribution $(\ker d\pi)^\perp$; see~\cite{GW09} for further information.
	Thus, the following is an immediate consequence:
	
	\begin{corollary}\label{COR:Riem_sub}
		Suppose $\pi\colon(M,g_M)\to (B,g_B)$ is a Riemannian submersion. If $\Ric_k>0$ on the horizontal distribution in $M$ for some $k\in\{1,\dots,\dim(B)-1\}$, then $\Ric_k(B,g_B)>0$.
	\end{corollary}

\subsection{Related results and curvature conditions}\label{SS:related_work}
	
	We close this section by highlighting some existing work on manifolds of $\Ric_k>0$ and related curvature conditions. Various classical results for manifolds with lower bounds on sectional or Ricci curvature have been extended appropriately to the context of $\Ric_k>0$ by several authors. Synge's Theorem was generalized by Wilhelm \cite{Wi97}, Heintze-Karcher's Inequality by Chahine \cite{Ch18}, and Gromoll-Meyer's Theorem and Cheeger-Gromoll's Soul Theorem on open manifolds by Shen \cite{Sh93}. Moreover, various comparison results have been established by Guijarro and Wilhelm in the series of papers \cite{GW18,GW20,GW}, while the third named author is carrying out a systematic study of manifolds of $\Ric_k>0$ under the presence of isometric actions by large Lie groups \cite{Mo19a,Mo20}. 
	
	Now given a unit vector $x\in T_pM$, let $x^\perp$ denote the orthogonal complement of $\mathrm{span}\{x\}$ in $T_pM$. 
	Then the restriction of the directional curvature operator $R_x$ to $x^\perp$ is a self-adjoint endomorphism.
	Verdiani and Ziller studied manifolds for which the sum of the $k$ smallest eigenvalues of $R_x|_{x^\perp}$ is positive or non-negative in \cite{VZ14}.
	One advantage of defining $\Ric_k$ as we have in this section is that one can easily see this condition is equivalent to $\Ric_k$ being positive or non-negative as a  consequence of the Min-Max Theorem applied to $R_x|_{x^\perp}$:
	
	\begin{proposition}
		On any Riemannian manifold, $\Ric_k$ is positive (resp.\ non-negative) if and only if the sum of the $k$ smallest eigenvalues of the operator $R_x|_{x^\perp}$ (counted with multiplicity) is positive (resp.\ non-negative) for all unit vectors $x$.
	\end{proposition}
	
	On the other hand, there has been active research on certain curvature conditions that interpolate between the classical positive Ricci and positive scalar curvature. These conditions have often been called $k$-Ricci curvature in the literature, which should not be mistaken with the curvature conditions under consideration in this work. We refer to the recent preprints \cite{CW20,WW20} for further information on the topic.
			
	Beyond this condition, many researchers have investigated other notions of partially positive curvature.
	For example, Gromov proposed the study of the so-called positive $p$-curvature, which interpolates between positive sectional curvature when $p=n-2$ and positive scalar curvature when $p=0$, where $n$ is the dimension of the manifold. For more information, see~\cite{La97} or \cite[Section~2.2]{Ko20}.
	For a second example, a Riemannian manifold $(M,g)$ is said to have $p$-positive curvature operator if the sum of the $p$ smallest eigenvalues of the curvature operator $\g{R}\colon\Lambda^2TM\to\Lambda^2TM$ is positive.
	B\"ohm and Wilking proved that a metric with $2$-positive curvature operator evolves through Ricci flow to a metric of constant positive sectional curvature, thus showing that such spaces are spherical space forms~\cite{BW08}.
	
%
%

\section{Normal Homogeneous Spaces}\label{SEC:normal_homogeneous_spaces}
This section is devoted to the investigation of the positive intermediate Ricci curvature condition in the context of normal homogeneous spaces. In particular, in Subsection~\ref{subsec:normal} we derive various general properties of the smallest $k$ for which normal homogeneous spaces satisfy $\Ric_k>0$. In Subsection~\ref{subsec:thB} we briefly discuss Nash-Berestovskii's result on homogeneous spaces with positive Ricci curvature, and prove  Theorem~\ref{THM:extension_Nash_Ber}. In Subsection~\ref{subsec:thD} we consider certain homogeneous spaces with diagonal isotropy groups, and prove Theorem~\ref{THM:Wilkings_products}. Finally, as an interesting example, in Subsection~\ref{SEC:Witte_spaces} we discuss some curvature and topological properties of certain circle bundles over a product of two complex projective spaces.

\subsection{Positive intermediate Ricci curvature on normal homogeneous spaces}\label{subsec:normal}
Let $G$ be a compact connected Lie group and $H$ a closed subgroup of $G$, and denote their Lie algebras by $\g{g}$ and $\g{h}$ respectively. Endow the homogeneous space $G/H$ with a normal homogeneous Riemannian metric. Let $\g{p}$ be the orthogonal complement of $\g{h}$ in $\g{g}$ with respect to the $\Ad(G)$-invariant inner product on $\g{g}$ that defines the normal homogeneous metric on $G/H$. 

For each non-zero $x\in \g{p}$, let us define $b(G/H,x)$ as the smallest integer $b$ for which the following holds: for every set of orthonormal vectors $y_1,\dots, y_b\in\g{p}$ orthogonal to $x$, there is some $i\in\{1,\dots,b\}$ such that $[x,y_i]\neq 0$. Then we define
\[
b(G/H)=\max_{x\in\g{p}\setminus\{0\}} b(G/H,x).
\]
Note that this definition of $b(G/H)$ agrees with the definition given in Subsection~\ref{SS:normal_homogeneous_symmetric}. The equivalence immediately follows from the well-known curvature formula for normal homogeneous metrics; see e.g.\ \cite[Corollary 2.4.1]{GW09}:
$$
\sec (x,y)=\frac{1}{4}|[x,y]_\g{p}|^2 + |[x,y]_\g{h}|^2, \quad \text{for orthonormal } x,y\in\g{p}.
$$
As mentioned in Subsection~\ref{SS:normal_homogeneous_symmetric}, $b(G/H)$ may depend on the actual pair $(G,H)$ under consideration; see Remarks~\ref{REM:b_depends_on_pair} and \ref{REM:b_depends_on_pair_2}. However, as we will see in Corollary~\ref{cor:independence}, once the pair $(G,H)$ is fixed, the number $b(G/H)$ does not depend on the normal homogeneous metric chosen on $G/H$. 

The next proposition gives an upper bound of $b(G/H,x)$ by the codimension in $\g{p}$ of the orbit $\Ad(H)x$. Here, $\Ad(H)x$ is the orbit through $x$ of the restriction of the adjoint representation of $G$ to $H$. Observe that $\Ad(H)x\subset\g{p}$ can be identified with the orbit of the isotropy representation of $G/H$ through the tangent vector $x\in\g{p}\cong T_{eH}(G/H)$. The bound we prove is sharp when $[\g{p},\g{p}]\subset\g{h}$, i.e.\ when $(G,H)$ is a symmetric pair. More precisely,
\[
b(G/H)\leq \max_{x\in \g{p}\setminus\{0\}}\codim_\g{p} \Ad(H) x,
\]
with equality if $(G,H)$ is a symmetric pair. Of course, the maximum can be taken over the unit sphere of $\g{p}$.

\begin{proposition}\label{prop:centralizer}
	Let $G/H$ be a normal homogeneous space, and $\g{g}=\g{h}\oplus\g{p}$ the corresponding reductive decomposition. Then $b(G/H,x)=\dim Z_\g{p}(x)$, where $Z_\g{p}(x)=\{y\in\g{p}:[x,y]=0\}$ is the centralizer of $x$ in $\g{p}$, and hence
	\[
	b(G/H)=\max_{x\in\g{p}\setminus\{0\}} \dim Z_\g{p}(x).
	\]	
	Moreover, $Z_\g{p}(x)$ is a linear subspace of $\nu_x(\Ad(H)x)$, the normal space at $x$ to the orbit $\Ad(H)x\subset \g{p}$. 
\end{proposition}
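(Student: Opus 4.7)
My plan splits the proposition into two independent assertions and addresses each by direct linear-algebraic arguments using the $\Ad(G)$-invariance of the inner product on $\g{g}$.

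For the first statement, I would translate the definition of $b(G/H,x)$ into a dimension count. By definition, $b(G/H,x)-1$ is the maximum cardinality of a set of orthonormal vectors in $\g{p}\cap x^\perp$ that all lie in $Z_\g{p}(x)=\{y\in\g{p}:[x,y]=0\}$. Since this maximum is simply $\dim(Z_\g{p}(x)\cap x^\perp)$, the task reduces to showing that this equals $\dim Z_\g{p}(x)-1$. For that I would only need to observe that $x\in Z_\g{p}(x)$ (because $[x,x]=0$) and $x\neq 0$, so that $Z_\g{p}(x)=\mathbb{R}x\oplus (Z_\g{p}(x)\cap x^\perp)$ as an orthogonal direct sum with respect to the given $\Ad(G)$-invariant inner product. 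Combining these gives $b(G/H,x)=\dim Z_\g{p}(x)$.

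For the second statement, I would first identify the tangent and normal spaces of the adjoint orbit. Since $\Ad(H)\g{p}\subset\g{p}$, the orbit $\Ad(H)x$ lies in $\g{p}$, and differentiating the orbit map at the identity yields $T_x(\Ad(H)x)=[\g{h},x]\subset\g{p}$. Hence $\nu_x(\Ad(H)x)=[\g{h},x]^\perp\cap\g{p}$. Given $y\in Z_\g{p}(x)$ and arbitrary $\xi\in\g{h}$, the bi-invariance identity $\langle[\xi,y],x\rangle=\langle\xi,[y,x]\rangle$ (which is a direct consequence of $\Ad(G)$-invariance of the inner product) together with $[x,y]=0$ would give $\langle[\xi,y],x\rangle=0$; the same identity, applied differently, yields $\langle y,[\xi,x]\rangle=-\langle[\xi,y],x\rangle=0$, so $y\perp[\g{h},x]$ and therefore $y\in\nu_x(\Ad(H)x)$.

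I do not anticipate a genuine obstacle here. The one point that needs care is the correct choice of bi-invariance formula and the verification that the orbit and its tangent space actually live inside $\g{p}$ (so the normal space can be legitimately taken inside $\g{p}$ rather than in all of $\g{g}$); both are standard facts about reductive decompositions $\g{g}=\g{h}\oplus\g{p}$ with $\Ad(H)$-invariant splitting.
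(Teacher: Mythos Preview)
Your proposal is correct and follows essentially the same approach as the paper. Both arguments reduce the first claim to the observation that $x\in Z_\g{p}(x)$ together with a dimension count (you phrase it as an orthogonal decomposition $Z_\g{p}(x)=\mathbb{R}x\oplus(Z_\g{p}(x)\cap x^\perp)$, the paper as a contradiction, but the content is identical), and both derive the second claim from $T_x(\Ad(H)x)=[\g{h},x]$ together with the skew-adjointness of $\ad$. The only difference is that the paper goes slightly further and records the exact identity $Z_\g{p}(x)=\nu_x(\Ad(H)x)\cap[\g{p},x]^\perp$, which you do not need for the proposition as stated.
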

\begin{proof}
	Fix $x\in\g{p}\setminus\{0\}$. If there is a set of orthonormal vectors $y_1,\dots, y_b\in\g{p}$ perpendicular to $x$ and such that $[x,y_i]=0$ for all $i$, then $\spann\{x,y_1,\dots, y_b\}\subset Z_\g{p}(x)$, and hence $b<\dim Z_\g{p}(x)$. Thus $b(G/H,x)\geq \dim Z_\g{p}(x)$. Consider now a set of orthonormal vectors $y_1,\dots, y_{\dim Z_\g{p}(x)}\in\g{p}$ perpendicular to $x$. If $[x,y_i]=0$ for all $i$, then $\spann\{x,y_1,\dots, y_{\dim Z_\g{p}(x)}\}\subset Z_\g{p}(x)$, whence $\dim Z_\g{p}(x)+1\leq \dim Z_\g{p}(x)$, a contradiction. Thus there must exist $y_i$ such that $[x,y_i]\neq 0$. Therefore, $b(G/H,x)\leq \dim Z_\g{p}(x)$, which yields the desired equality.
	
	The tangent space to the orbit $\Ad(H)x$ at $x$ is given by $[\g{h},x]$. Let $y\in \g{p}$. By the $\Ad(G)$-invariance of the inner product on $\g{g}$, we have $\langle [x,y],T\rangle = \langle y,[T,x]\rangle$ for any $T\in\g{h}$. Thus, $[x,y]_\g{h}=0$ (where the subscript denotes orthogonal projection) if and only if $y$ is orthogonal to the $\Ad(H)$-orbit through $x$ at $x$, i.e.\ $y\in\nu_x(\Ad(H)x)$. Similarly, for any $z\in \g{p}$ we have $\langle [x,y],z\rangle=\langle y, [z,x]\rangle$. Therefore, $[x,y]_\g{p}=0$ if and only if $y$ is orthogonal to $[\g{p},x]$. All in all, $[x,y]=0$ if and only if $y\in \nu_x(\Ad(H)x)\cap [\g{p},x]^\perp$, where $[\g{p},x]^\perp$ denotes the orthogonal complement of $[\g{p},x]$ in $\g{g}$. This shows that $Z_\g{p}(x)=\nu_x(\Ad(H)x)\cap [\g{p},x]^\perp\subset\nu_x(\Ad(H)x)$. 
\end{proof}

\begin{remark}\label{REM:bounds_nested_inclusions}
	An immediate consequence (which also follows by a standard Riemannian submersion argument) is that for a nested triple $H<K<G$ of compact Lie groups, we have $b(G/K)\leq b(G/H)$. Denote by $\mathfrak{h}\subset\mathfrak{k}\subset\mathfrak{g}$ their Lie algebras and fix a bi-invariant metric on $G$. For any $x\in\g{k}^\perp$ we have $Z_{\g{k}^\perp}(x)\subset Z_{\g{h}^\perp}(x)$, which implies $b(G/K,x)\leq b(G/H,x)$, and thus 
	\[
	b(G/K)=\max_{x\in\g{k}^\perp\setminus\{0\}} b(G/K,x)\leq \max_{x\in\g{k}^\perp\setminus\{0\}} b(G/H,x)\leq \max_{x\in\g{h}^\perp\setminus\{0\}} b(G/H,x) = b(G/H).
	\]
\end{remark}

\begin{corollary}\label{cor:independence}
	The number $b(G/H)$ is independent of the normal homogeneous Riemannian metric on $G/H$.
\end{corollary}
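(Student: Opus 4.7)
The plan is to extract from the proof of Proposition~\ref{prop:centralizer} an explicit expression for $\dim Z_\mathfrak{p}(x)$ in which the inner product does not appear, and to conclude the independence from that formula. First I would use that $\ad_x$ is skew-adjoint on $\mathfrak{g}$ with respect to any $\Ad(G)$-invariant inner product. An adjoint computation for the restriction $\ad_x|_\mathfrak{p}:\mathfrak{p}\to\mathfrak{g}$ gives $(\ad_x|_\mathfrak{p})^*(z)=-\pi_\mathfrak{p}([x,z])$ for $z\in\mathfrak{g}$, so its image equals $\pi_\mathfrak{p}([\mathfrak{g},x])$, where $\pi_\mathfrak{p}:\mathfrak{g}\to\mathfrak{p}$ is the orthogonal projection and $\ker\pi_\mathfrak{p}=\mathfrak{h}$. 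Combining rank--nullity for $\ad_x|_\mathfrak{p}$ and for $\ad_x:\mathfrak{g}\to\mathfrak{g}$ yields
\[
\dim Z_\mathfrak{p}(x) \;=\; \dim Z_\mathfrak{g}(x) \;-\; \dim\mathfrak{h} \;+\; \dim\bigl(\mathfrak{h}\cap[\mathfrak{g},x]\bigr).
\]
The right-hand side is an intrinsic function $f(x)$ of the Lie-algebraic data $(\mathfrak{g},\mathfrak{h},x)$; the inner product has disappeared from the formula. Moreover, $f$ is $\Ad(H)$-invariant and invariant under nonzero rescaling of $x$.

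Therefore $b(G/H)=\max_{x\in\mathfrak{p}\setminus\{0\}} f(x)$ depends on $\mathfrak{p}$ only through the set over which we maximize. To finish I would connect any two $\Ad(G)$-invariant inner products $\langle\,,\rangle_0,\langle\,,\rangle_1$ by the convex combination $\langle\,,\rangle_t=(1-t)\langle\,,\rangle_0+t\langle\,,\rangle_1$, which is $\Ad(G)$-invariant for every $t\in[0,1]$, producing a continuous family $\mathfrak{p}_t$ of reductive complements and an integer-valued map $t\mapsto b_t$. Upper semicontinuity of $b_t$ is immediate from openness of the condition $\Ric_k>0$ on the space of Riemannian metrics. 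For lower semicontinuity, given $x_0\in\mathfrak{p}_{t_0}$ realizing $f(x_0)=b_{t_0}$, I would produce, for each $t$ near $t_0$, a vector $x_t\in\mathfrak{p}_t$ with $f(x_t)\geq b_{t_0}$ by transporting $x_0$ continuously through the family $\mathfrak{p}_t$ while exploiting both the $\Ad(H)$-invariance of $f$ and the semi-algebraic structure of the superlevel set $\{y\in\mathfrak{g}:f(y)\geq b_{t_0}\}$. An integer-valued function on a connected interval that is both upper and lower semicontinuous is constant, yielding the corollary.

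The hardest step is this last lower-semicontinuity argument: the continuous transport of $x_0$ within $\mathfrak{p}_t$ while keeping $f$ from dropping requires a careful matching of the smooth variation of $\mathfrak{p}_t$ inside the Grassmannian of subspaces of $\mathfrak{g}$ with the algebraic constraint $f\geq b_{t_0}$; once this is carried out, however, the corollary follows at once from the intrinsic nature of the formula for $\dim Z_\mathfrak{p}(x)$.
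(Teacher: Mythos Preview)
Your intrinsic formula
\[
\dim Z_{\g{p}}(x)=\dim Z_{\g{g}}(x)-\dim\g{h}+\dim\bigl(\g{h}\cap[\g{g},x]\bigr)
\]
is correct and is a genuinely nice observation: it makes transparent that the only dependence of $b(G/H)$ on the inner product lies in the \emph{domain} $\g{p}$ of the maximization, not in the function being maximized. Upper semicontinuity of $t\mapsto b_t$ via openness of $\Ric_k>0$ on the compact manifold $G/H$ is also fine.

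The gap is exactly where you locate it: lower semicontinuity. Your description---``transport $x_0$ continuously through the family $\g{p}_t$ while exploiting $\Ad(H)$-invariance and the semi-algebraic structure of $\{f\geq b_{t_0}\}$''---is not a proof but a hope. The superlevel set $\{f\geq b_{t_0}\}$ is constructible but in general neither open nor closed (it is a difference of rank conditions), and there is no transversality principle that forces a continuously moving $\Ad(H)$-invariant subspace $\g{p}_t$ to keep meeting it. A generic perturbation of a linear subspace away from a lower-dimensional constructible set is precisely the situation where intersections can disappear, and nothing you have written rules this out.

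The paper avoids this entirely by a direct algebraic argument. Using Schur's lemma one writes any two $\Ad(G)$-invariant inner products as factor-wise rescalings on the simple ideals $\g{g}_i$ (and an automorphism of the center), and then builds an explicit linear isomorphism $\varphi\colon\g{m}\to\g{p}$, $\varphi(\sum_i x_i)=A^{-1}x_0+\sum_{i\geq1}s_i^{-1}x_i$, between the two complements. Because the ideals commute with each other, one checks immediately that $[x,y]=0$ if and only if $[\varphi(x),\varphi(y)]=0$, so $\varphi$ carries $Z_{\g{m}}(x)$ bijectively onto $Z_{\g{p}}(\varphi(x))$. No continuity, no semicontinuity---just a bijection that preserves the quantity of interest. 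In fact your formula dovetails with this: one checks in one line that $f(\varphi(x))=f(x)$ (since $Z_{\g{g}}$ and $[\g{g},\cdot]$ are unchanged under factor-wise nonzero rescaling and under automorphisms of the abelian part), which would complete your argument without any limiting procedure. That is the missing idea.
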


\begin{remark}
Corollary~\ref{cor:independence} is obvious if $G$ is simple, since all bi-invariant metrics on $G$ (and hence all normal metrics on $G/H$) are isometric up to scaling by Schur's lemma. Likewise, it clearly holds if $G/H$ is isotropy irreducible (cf.~\cite[Proposition~7.91]{Besse}).
\end{remark}

\begin{proof}[Proof of Corollary~\ref{cor:independence}]
	Let $\g{g}=\bigoplus_{i=0}^k\g{g}_i$, where $\g{g}_0$ is the abelian factor, and $\g{g}_i$, $i\geq 1$, are simple ideals. Fix an $\Ad(G)$-invariant inner product $\llangle\cdot,\cdot\rrangle$ on $\g{g}$. By Schur's lemma, any other $\Ad(G)$-invariant inner product $\langle\cdot,\cdot\rangle$ on $\g{g}$ satisfies $\langle\cdot,\cdot\rangle\rvert_{\g{g}_i\times\g{g}_i}=s_i \llangle\cdot,\cdot\rrangle$, for some $s_i>0$ and each $i\geq1$, and $\langle\cdot,\cdot\rangle\rvert_{\g{g}_0\times\g{g}_0}=\llangle A \,\cdot,\cdot\rrangle$, for some linear automorphism $A$ of $\g{g}_0$. Since each $\g{g}_i$, $i\geq 1$, is simple, we have $[\g{g}_i,\g{g}_i]=\g{g}_i$, which together with the $\Ad(G)$-invariance implies $\langle\cdot,\cdot\rangle\rvert_{\g{g}_i\times\g{g}_j}=0$ for each $i,j$, $i\neq j$.  Let $\g{m}$ (respectively\ $\g{p}$) be the orthogonal complement of $\g{h}$ in $\g{g}$ with respect to $\llangle\cdot,\cdot\rrangle$ (resp.\ with respect to $\langle\cdot,\cdot\rangle$). Consider the linear isomorphism $\varphi\colon \g{m}\to \g{p}$ given by $\varphi(\sum_i x_i)=A^{-1}x_0+\sum_{i\geq 1} s_i^{-1}x_i$, where $x_i\in\g{g}_i$. It is well defined, since for any $T=\sum_i T_i\in\g{h}$, $T_i\in\g{g}_i$, we have $\langle \varphi(\sum_i x_i), T\rangle=\langle A^{-1}x_0+\sum_{i\geq 1} s_i^{-1}x_i, \sum_i T_i\rangle=\llangle\sum_ix_i,T\rrangle=0$.
	
	
	Let $x=\sum_i x_i$, $y=\sum_i y_i\in\g{m}$, $x_i$, $y_i\in\g{g}_i$. Then $y\in Z_\g{m}(x)$ if and only if $0=[y,x]=\sum_i[y_i,x_i]$, which is equivalent to $[y_i,x_i]=0$ for all $i$. Similarly, $\varphi(y)\in Z_\g{p}(\varphi(x))$ if and only if $0=[A^{-1}y_0+\sum_{i\geq 1} s_i^{-1}y_i,A^{-1}x_0+\sum_{i\geq 1} s_i^{-1}x_i]=\sum_{i\geq 1} s_i^{-2} [y_i,x_i]$, which again is equivalent to $[y_i,x_i]=0$ for all $i$. Therefore, $y\in Z_\g{m}(x)$ if and only if $\varphi(y)\in Z_\g{p}(\varphi(x))$. Since $\varphi$ is an isomorphism, it follows that $\dim Z_\g{m}(x)=\dim Z_\g{p}(\varphi(x))$, and then by Proposition~\ref{prop:centralizer}, we get that $b(G/H,x)=b(G/H,\varphi(x))$. Therefore, $b(G/H)$ does not depend on the choice of $\Ad(G)$-invariant inner product on $\g{g}$.
\end{proof}

For normal homogeneous spaces that are products, one can easily prove the following (cf.~Proposition~\ref{PROP:products}):
\begin{proposition}\label{prop:products_homogeneous}
	Let $G/H=G_1/H_1\times \dots \times G_s/H_s$ be a normal homogeneous space. Then $b(G/H)=\max\{b(G_j/H_j)+\dim G/H-\dim G_j/H_j:j=1,\dots,s\}$.
\end{proposition}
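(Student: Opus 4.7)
The plan is to invoke Proposition~\ref{prop:centralizer}, which identifies $b(G/H)=\max_{x\in\g{p}\setminus\{0\}}\dim Z_{\g{p}}(x)$, and to exploit the block structure inherited from the product. First I would realize $G/H$ as the normal homogeneous space of $G_1\times\dots\times G_s$ modulo $H_1\times\dots\times H_s$ equipped with a product $\Ad(G)$-invariant inner product $\sum_j\langle\cdot,\cdot\rangle_j$; by Corollary~\ref{cor:independence} the particular choice is irrelevant. This yields an orthogonal reductive splitting $\g{p}=\bigoplus_{j=1}^s\g{p}_j$, where $\g{p}_j$ is the orthogonal complement of $\g{h}_j$ in $\g{g}_j$.

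Next, since the ideals $\g{g}_j$ commute pairwise, for $x=\sum_j x_j$ and $y=\sum_j y_j$ with $x_j,y_j\in\g{p}_j$ one has $[x,y]=\sum_j[x_j,y_j]$, hence $[x,y]=0$ if and only if $[x_j,y_j]=0$ for every $j$. This yields the orthogonal decomposition $Z_{\g{p}}(x)=\bigoplus_j Z_{\g{p}_j}(x_j)$, and therefore
\[
\dim Z_{\g{p}}(x)=\sum_{j=1}^s\dim Z_{\g{p}_j}(x_j).
\]

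The heart of the argument is maximizing this sum over $x\neq 0$. Let $S=\{j:x_j\neq 0\}$, which is nonempty. For $j\notin S$ the contribution is $\dim Z_{\g{p}_j}(0)=\dim G_j/H_j$, while for $j\in S$ it is $b(G_j/H_j,x_j)\leq b(G_j/H_j)\leq \dim G_j/H_j$, the last inequality being trivial from $Z_{\g{p}_j}(x_j)\subset\g{p}_j$. The main (and essentially only nontrivial) step is the following consolidation: if $|S|\geq 2$, then for any fixed $j_0\in S$, replacing $x$ by the vector whose sole nonzero component is $x_{j_0}$ never decreases the sum, since each $j\in S\setminus\{j_0\}$ trades a contribution $\leq\dim G_j/H_j$ for exactly $\dim G_j/H_j$, while the $j_0$ contribution is unchanged. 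Thus the supremum is attained on vectors supported in a single factor, and maximizing first over $x_j\neq 0$ and then over $j$ gives the claimed formula $b(G/H)=\max_j\bigl[b(G_j/H_j)+\dim G/H-\dim G_j/H_j\bigr]$. There is no real obstacle here: the proof is a clean combination of the centralizer decomposition with the one-line consolidation just described.
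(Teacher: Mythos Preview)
Your argument is correct and is precisely the routine verification the paper has in mind when it says ``one can easily prove the following'' and omits the proof: use Proposition~\ref{prop:centralizer} together with the block decomposition $Z_{\g{p}}(x)=\bigoplus_j Z_{\g{p}_j}(x_j)$, then observe that concentrating $x$ in a single factor can only increase $\dim Z_{\g{p}}(x)$. There is nothing to add.
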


\subsection{The case $b(G/H)=\dim G/H-1$ and the proof of Theorem~\ref{THM:extension_Nash_Ber}}\label{subsec:thB}
Nash~\cite{Na79} and Berestovskii~\cite{Be95} proved that for a compact Lie group $G$ and a closed subgroup $H$ of $G$, the following are equivalent: 
\begin{enumerate}[\rm(i)]
	\item the fundamental group $\pi_1(G/H)$ is finite,
	\item there is a compact semisimple Lie group acting transitively on $G/H$,
	\item $G/H$ admits a metric of positive Ricci curvature,
	\item any normal metric on $G/H$ has positive Ricci curvature (i.e.\ $b(G/H)\leq \dim G/H-1$).
\end{enumerate} 
Since we are interested in spaces admitting metrics of positive Ricci curvature, we will assume from now on that the normal homogeneous space $G/H$ is such that $G$ is semisimple.

\begin{remark}\label{REM:b_coverings}
Suppose $G$ is semisimple and hence $G/H$ has finite fundamental group. Then its universal covering space carries a natural homogeneous structure which can be described as $\widetilde{G}/\bar{H}$, where $\widetilde{G}$ is the universal cover of $G$ and $\bar{H}$ is the connected Lie subgroup of $\widetilde{G}$ with Lie algebra $\g{h}$. Since $b(-)$ depends on the corresponding Lie algebras and these remain unchanged under coverings, it clearly follows that $b(G/H)=b(\widetilde{G}/\bar{H})$.
\end{remark}

We will now prove Theorem~\ref{THM:extension_Nash_Ber}, which states that, if the De Rham decomposition of the universal cover of $G/H$ does not contain any $\mathbb{S}^2$-factor, then the previous conditions (i)-(iv) are also equivalent to $b(G/H)\leq \dim G/H -2$. Thus, Theorem~\ref{THM:extension_Nash_Ber} serves as a gap theorem for normal homogeneous metrics of positive intermediate Ricci curvature. 

\begin{remark}\label{REM:bG_equals_bGxG_DeltaG}
In the proof of Theorem~\ref{THM:extension_Nash_Ber}, we use the computation of $b(G)$ for any semisimple $G$, which is carried out in Section~\ref{SEC:symmetric_spaces} below. Actually we shall compute $b((G\times G)/\Delta G)$, but it holds that $b(G)=b((G\times G)/\Delta G)$, as any normal homogeneous metric (i.e.\ bi-invariant metric) on $G$ is isometric to a normal homogeneous metric (i.e.\ symmetric metric) on $(G\times G)/\Delta G$, via the identification $G\cong (G\times G)/\Delta G$ given by $g\mapsto [g,1]$. See also Proposition~\ref{prop:KxG}~(b) below.
\end{remark}

\begin{proof}[Proof of Theorem~\ref{THM:extension_Nash_Ber}]
	Let $G/H$ be a normal homogeneous space for a semisimple Lie group $G$. In view of Remark~\ref{REM:b_coverings}, we can assume that both $G$ and $G/H$ are simply connected, and hence the proof of Theorem~\ref{THM:extension_Nash_Ber} reduces to showing that $b(G/H)=\dim G/H - 1$ if and only if $G/H$ splits off a factor isometric to a round $\mathbb{S}^2$.
	
	To prove sufficiency, suppose $G/H$ is isometric to a Riemannian product $\mathbb{S}^2\times M$ for some $M$. This, by De Rham decomposition theorem for naturally reductive spaces (see~\cite[\S{}X.5]{KN:vol2}), implies that $G/H=G_1/H_1\times G_2/H_2$ for closed subgroups $H_i<G_i<G$, $i\in\{1,2\}$, where $G_1/H_1\cong \mathbb{S}^2$ and $G_2/H_2\cong M$ are endowed with normal homogeneous metrics. Then, by Proposition~\ref{prop:products_homogeneous}:
	$$b(G/H)=\max\{b(\mathbb{S}^2)+\dim M, \dim \mathbb{S}^2+b(M)\}\geq \dim M+1=\dim G/H-1.$$
	It follows that $b(G/H)=\dim G/H-1$ by Nash's result referenced above.
	
	For the necessity, assume $b(G/H)=\dim G/H-1$. By Proposition~\ref{prop:centralizer}, there is $x\in\g{p}$ such that $\dim Z_\g{p}(x)=\dim G/H-1$ and $\dim \Ad(H)x\in\{0,1\}$. We treat these cases separately.
	
	Assume first that there exists $x\in\g{p}$ such that $\dim Z_\g{p}(x)=\dim G/H-1$ and $\Ad(H) x$ has dimension $1$. Then, there exists a normal subgroup $N$ of $H$ such that the induced action of $H/N$ on $\Ad(H) x$ is effective. Since $\Ad(H) x$ is $1$-dimensional, we must have $\dim H/N=1$, as only $1$-dimensional Lie groups can act effectively on a $1$-dimensional manifold. Since $\g{h}$ is a compact (and hence reductive) Lie algebra, it splits as an orthogonal direct sum $\g{h}=\g{so}(2)\oplus\g{n}$, where $\mathfrak{n}$ denotes the Lie algebra of $N$. Let $T$ be a generator of the $\g{so}(2)$-factor. Note that $[T,x]$ spans $T_x(\Ad(H)x)$, and by Proposition~\ref{prop:centralizer} and dimension reasons, $Z_\g{p}(x)=[T,x]^\perp$ is the orthogonal complement of $[T,x]$ in $\g{p}$. Then $Z_\g{g}(x)=\g{n}\oplus[T,x]^\perp$. Indeed, no linear combination of $T$ and $[T,x]$ centralizes $x$, as $\langle [rT+[T,x],x],T\rangle=-\langle [T,x],[T,x]\rangle\neq 0$, $r\in\R$, and $[T,x]\neq 0$. But then $b(G)\geq \dim Z_\g{g}(x)= \dim (\g{n} \oplus Z_\g{p}(x))=\dim G -2$. Theorem~\ref{THM:observations_table_symmetric_spaces}~\eqref{item:Db} and Remark~\ref{rem:factor} imply that $\g{g}$ splits as an orthogonal direct sum $\g{g}=\g{su}_2\oplus \g{g}'$, for some compact semisimple Lie algebra $\g{g}'$, and where the $\g{su}_2$-factor is precisely $\g{su}_2=\R x\oplus Z_\g{g}(x)^\perp=\spann\{T, x, [T,x]\}$. Then $\g{so}_2=\R T\subset\g{su}_2$ and $\g{n}\subset \g{g}\cap \g{su}_2^\perp=\g{g}'$. By assumption $G$ is simply connected and $H$ is connected, thus $G=\su{2}\times G'$ and $H=\so{2}\times N$, where $G'$ is the connected Lie subgroup of $G$ with Lie algebra $\g{g}'$. Therefore $G/H=(\su{2}/\so{2})\times (G'/N)=\mathbb{S}^2\times (G'/N)$.
	
	Now assume that there exists $x\in\g{p}$, $x\neq 0$, such that $\dim Z_\g{p}(x)=\dim G/H-1$ and $\dim \Ad(H) x=0$. Hence $b(G)\geq \dim Z_\g{g}(x)=\dim \g{h}\oplus Z_\g{p}(x)=\dim G-1$, which is impossible by Remark~\ref{rem:factor}. Thus we get a contradiction, and this case is not possible.		
\end{proof}

In the previous proof, an important step was to show that if $b(G/H)=\dim G/H-1$, then $b(G)\geq \dim G-2$. This can be obtained as a consequence of the following result, which, given any homogeneous space $G/H$, provides an upper bound for $b(G/H)$ in terms of $b(G)$, whose value will be computed explicitly for any compact Lie group $G$ in Section~\ref{SEC:symmetric_spaces}. We also state a straightforward lower bound for $b(G/H)$ in terms of the ranks of $G$ and $H$, which generalizes an observation of Berger~\cite[Proposition~6.1]{Be61}.
For a broader generalization of Berger's observation, see \cite[Corollary G]{MoThesis}.

\begin{proposition}\label{prop:inequality}
	Let $G/H$ be a normal homogeneous space. Then
	\[
	\rank G-\rank H\leq b(G/H)\leq \frac{1}{2}\bigl(b(G)+\dim G/H-\dim H\bigr).
	\]
\end{proposition}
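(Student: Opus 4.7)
The plan is to reinterpret both sides of the desired inequality as dimensions of centralizers via Proposition~\ref{prop:centralizer}, and then exploit the skew-symmetry of $\ad_x$ together with a ``transpose'' identity between $\ad_x|_\g{h}$ and $\pi_\g{h}\circ \ad_x|_\g{p}$ to show that the image $[x,\g{g}]$ is at most twice as big as $[x,\g{p}]$.

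First, by Proposition~\ref{prop:centralizer} applied to $G/H$, one has $b(G/H,x)=\dim Z_\g{p}(x)$. Applying the same proposition to $G\cong (G\times G)/\Delta G$ (see Remark~\ref{REM:bG_equals_bGxG_DeltaG}), whose reductive complement $\{(X,-X):X\in\g{g}\}$ satisfies $[(X,-X),(Y,-Y)]=([X,Y],[X,Y])\in\Delta\g{g}$, one checks that centralizers in the complement are naturally identified with $Z_\g{g}(X)$. Hence $b(G,x)=\dim Z_\g{g}(x)$. Since the $\Ad(G)$-invariant inner product makes $\ad_x$ skew-symmetric, rank-nullity gives $\dim Z_\g{g}(x)=\dim \g{g}-\dim [x,\g{g}]$ and $\dim Z_\g{p}(x)=\dim \g{p}-\dim [x,\g{p}]$, where $[x,\g{p}]$ denotes the image of $\ad_x|_\g{p}\colon \g{p}\to \g{g}$.

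The key step is to decompose $[x,\g{g}]=[x,\g{h}]+[x,\g{p}]$ and estimate the first summand. The reductive property $[\g{h},\g{p}]\subset\g{p}$ gives $[x,\g{h}]\subset \g{p}$, and the bi-invariance identity
\[
\langle [x,T],y\rangle=-\langle T,[x,y]\rangle=-\langle T,\pi_\g{h}[x,y]\rangle,\qquad T\in\g{h},\ y\in\g{p},
\]
shows that $\ad_x|_\g{h}\colon \g{h}\to \g{p}$ and $-\pi_\g{h}\circ \ad_x|_\g{p}\colon \g{p}\to \g{h}$ are mutually adjoint. In particular they have the same rank, so
\[
\dim [x,\g{h}]=\dim \pi_\g{h}[x,\g{p}]\leq \dim [x,\g{p}],
\]
which yields $\dim [x,\g{g}]\leq 2\dim [x,\g{p}]$. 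Plugging this back into the rank-nullity formulas of the previous paragraph produces
\[
\dim Z_\g{g}(x)\geq \dim \g{g}-2\dim[x,\g{p}]=2\dim Z_\g{p}(x)+\dim \g{h}-\dim \g{p},
\]
which is exactly the pointwise inequality $b(G,x)\geq 2b(G/H,x)+\dim H-\dim G/H$. Taking $x\in\g{p}$ realizing $b(G/H)$ and using $b(G)\geq b(G,x)$ rearranges to give the ``in particular'' statement. The only nonroutine ingredient is recognizing the transpose relation between $\ad_x|_\g{h}$ and $\pi_\g{h}\circ \ad_x|_\g{p}$; once this equality of ranks is in hand the remainder is bookkeeping with rank-nullity.
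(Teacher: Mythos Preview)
Your proof is correct. The argument is essentially dual to the paper's, just phrased in linear-algebraic rather than geometric language. The paper bounds $\dim Z_\g{g}(x)$ from below by $\dim Z_\g{p}(x)+\dim \g{h}_x$ (with $\g{h}_x$ the isotropy algebra of the $\Ad(H)$-action at $x$) and then controls $\dim\g{h}_x$ via Proposition~\ref{prop:centralizer}, which says $Z_\g{p}(x)\subset\nu_x(\Ad(H)x)$. Unwinding that, the paper is using exactly your inequality $\dim[x,\g{h}]\leq\dim[x,\g{p}]$, since $\dim\Ad(H)x=\dim[x,\g{h}]$ and $\codim_\g{p}\Ad(H)x=\dim\ker(\pi_\g{h}\circ\ad_x|_\g{p})$. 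Your transpose observation that $\ad_x|_\g{h}$ and $-\pi_\g{h}\circ\ad_x|_\g{p}$ are adjoint (hence have the same rank) is precisely what underlies the proof of Proposition~\ref{prop:centralizer}; you simply invoke it directly rather than routing through the geometric statement about normal spaces to isotropy orbits. This makes your version marginally more self-contained, while the paper's version makes the connection to the orbit geometry of the isotropy representation explicit.
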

\begin{proof}
	Let $x\in\g{p}\setminus \{0\}$. Denote by $H_x$  the isotropy group at $x$ of the restriction of the adjoint representation of $G$ to $H$. In particular, $\Ad(H)x=H/H_x$. Since $Z_\g{p}(x)$ and the Lie algebra of $H_x$ are orthogonal subspaces of $\g{g}$, and both commute with $x$, we have:
	\begin{align*}
		\dim Z_\g{g}(x) &\geq \dim Z_\g{p}(x) + \dim H_x = \dim Z_\g{p}(x) + \dim H - \dim \Ad(H)x 
		\\&= \dim Z_\g{p}(x) + \dim H - (\dim \g{p} - \dim \nu_x (\Ad(H)x)) 
		\\&\geq  2\dim Z_\g{p}(x) + \dim H - \dim G/H,
	\end{align*}
	where in the last inequality we have used Proposition~\ref{prop:centralizer}. Taking the maximum over all $x\in\g{p}\setminus\{0\}$ in both sides of the inequality, we obtain the upper bound for $b(G/H)$ in the statement. The lower bound follows from the fact that any maximal abelian subalgebra $\g{t}$ of $\g{h}$ can be extended to a maximal abelian subalgebra $\g{s}\supset\g{t}$ of $\g{g}$. Indeed, if $x\in\g{s}\cap\g{t}^\perp\subset \g{p}$, $x\neq 0$, then $\g{s}\cap\g{t}^\perp\subset Z_\g{p}(x)$, and hence $b(G/H)\geq \dim Z_\g{p}(x)\geq \dim \g{s}\cap\g{t}^\perp=\rank G-\rank H$.
\end{proof}


\subsection{Diagonal subgroups and the proof of Theorem~\ref{THM:Wilkings_products}}\label{subsec:thD}

In this subsection we will consider normal metrics on homogeneous spaces of the form $(K\times G)/\Delta K$, where $K<G$ are compact Lie groups and $\Delta K$ is the diagonal embedding of $K$ into the product $K\times G$. Note that $G$ is diffeomorphic to $(K\times G)/\Delta K$ by the identification $g\mapsto [1,g]$. Our study is motivated by the fact, proved in Proposition~\ref{prop:KxG} below, that normal metrics on $(K\times G)/\Delta K$ are ``at least as curved'' as normal metrics (i.e.~bi-invariant metrics) on $G$. Theorem~\ref{THM:Wilkings_products}, which we will prove below, shows that in some cases normal metrics on $(K\times G)/\Delta K$ are indeed ``more curved" than bi-invariant metrics on $G$.

\begin{proposition}\label{prop:KxG}
	Let $G$ be a compact semisimple Lie group, and $K$ a closed subgroup of $G$. Let $\Delta K=\{(k,k):k\in K\}<K\times G$. Then, we have:
	\begin{enumerate}[\rm(a)]
		\item $\max\{b(K),b(G/K)\}\leq b((K\times G)/\Delta K)\leq b(G)$.
		\item If $\rank K=\rank G$, then $b((K\times G)/\Delta K)= b(G)$.
		\item Any homogeneous metric on $(K\times G)/\Delta K$ is isometric to a left-invariant and right $K$-invariant metric on $G$.
	\end{enumerate} 	
\end{proposition}
\begin{proof}
	Let $\langle \cdot,\cdot \rangle$ denote a bi-invariant metric on $G$. We endow $K\times G$ with the bi-invariant product metric $\langle \cdot,\cdot \rangle\vert_\g{k}+\langle \cdot,\cdot \rangle$. We consider the reductive decomposition $\Delta \g{k}\oplus\g{p}$ of $\g{k}\oplus\g{g}$, where $\Delta\g{k}=\{(w,w):w\in\g{k}\}$ and $\g{p}=\{(-y_\g{k},y):y\in\g{g}\}$. Here, $y_\g{k}$ denotes orthogonal projection of $y$ onto $\g{k}$. Let $x=(-y_\g{k},y)$, $x'=(-y_\g{k}',y')\in\g{p}$ be arbitrary. Then $[x,x']=0$ if and only if $[y_\g{k},y_\g{k}']=0=[y,y']$. In other words, $x'\in Z_\g{p}(x)$ if and only if $y_\g{k}'\in Z_\g{k}(y_\g{k})$ and $y'\in Z_\g{g}(y)$. Since the projection $\pi_\g{g}\colon\g{p}\to \g{g}$ onto the second factor of $\g{k}\oplus\g{g}$ is a linear isomorphism, we get that $\dim Z_\g{p}(x)\leq \dim Z_\g{g}(\pi_\g{g}(x))$, for any $x\in\g{p}$, where $\pi_\g{g}(x)\neq 0$ if $x\neq 0$. By Proposition~\ref{prop:centralizer}, this implies that $b((K\times G)/\Delta K)\leq b(G)$, which shows the second inequality in (a).
	
	For the first inequality in (a), let $w\in \g{k}\setminus\{0\}$ be such that $b(K)=\dim Z_\g{k}(w)$. Then $(-w,w)\in\g{p}\setminus\{0\}$, and $Z_\g{p}(-w,w)\supset \{(-w',w'):w'\in Z_\g{k}(w)\}$, from where $b((K\times G)/\Delta K)\geq \dim Z_\g{p}(-w,w)\geq\dim Z_\g{k}(w)=b(K)$. Similarly, let $y\in\g{p}'\defeq\g{g}\cap\g{k}^\perp$, $y\neq 0$, be such that $b(G/K)=\dim Z_{\g{p}'}(y)$. Then $(0,y)\in\g{p}\setminus\{0\}$, $Z_\g{p}(0,y)\supset\{(0,y'):y'\in Z_{\g{p}'}(y)\}$, and hence $b((K\times G)/\Delta K)\geq \dim Z_\g{p}(0,y)\geq\dim Z_{\g{p}'}(y)=b(G/K)$. This completes the prove of Item~(a).
	
	Now assume that $\rank K=\rank G$. Let $\g{t}$ be a maximal abelian subalgebra of $\g{k}$, which by assumption is also a maximal abelian subalgebra of $\g{g}$.  Since centralizers are conjugated under the adjoint representation, and $\g{t}$ intersects any orbit of the adjoint representation of $G$, there is $y\in\g{t}\setminus\{0\}$ such that $\dim Z_\g{g}(y)=\max_{z\in\g{g}\setminus\{0\}}\dim Z_\g{g}(z)=b(G)$. Define $x\defeq(-y,y)=(-y_\g{k},y)\in\g{p}\setminus\{0\}$. Note that if $y'\in Z_\g{g}(y)$, then $0=[y,y']=[y,y'_\g{k}]+[y,y'_{\g{g}\cap\g{k}^\perp}]$, which implies $[y,y'_\g{k}]=0$. 
	Hence, $Z_\g{p}(x)=\{(-y_\g{k}',y'):y'\in Z_\g{g}(y)\}$, which has dimension $\dim Z_\g{g}(y)=b(G)$. Therefore $b((K\times G)/\Delta K)\geq \dim Z_\g{p}(x)=b(G)$, from where (b)~follows.
	
	Finally, note that the natural diffeomorphism $(K\times G)/\Delta K \cong G$ given by $[k,g]\mapsto gk^{-1}$ sends the canonical left $(K\times G)$-action on $(K\times G)/\Delta K$ given by $(k,g)\cdot [a,b]=[ka,gb]$ to the two-sided $(K\times G)$-action on $G$ given by $(k,g)\cdot b=gbk^{-1}$. Thus, any homogeneous metric on $(K\times G)/\Delta K$ is isometric to a metric on $G$ which is left $G$-invariant and right $K$-invariant. This proves (c).
\end{proof}

We will now focus on the particular case considered in Theorem~\ref{THM:Wilkings_products}, namely that of the homogeneous spaces of the form $(K\times K\times K)/\Delta K$, where $K$ is a compact Lie group, and  $\Delta K=\{(k,k,k):k\in K\}$. 
Theorem~\ref{THM:Wilkings_products} asserts that $b(K^3/\Delta K)=2b(K)<b(K^2)$ if $K$ is semisimple, thus showing that both inequalities in Proposition~\ref{prop:KxG}~(a) are strict in this particular~case.


\begin{proof}[Proof of Theorem~\ref{THM:Wilkings_products}]
	Let $H\defeq \Delta K<K^3 \;\reflectbox{$\defeq$}\; G$, and consider the homogeneous space $G/H=K^3/\Delta K$. 
	We will prove that $b(G/H)=2b(K)$.
	The corresponding Lie algebras are $\g{g}=\g{k}^3$ and $\g{h}=\{(v,v,v) : v\in\g{k}\}$. On $G=K^3$ we consider the bi-invariant metric given by the product metric of a fixed bi-invariant metric on $K$. Then clearly $\g{p}\defeq \g{h}^\perp =\{ (w,z,-w-z) : w,z\in\g{k}\}$. The Lie bracket splits factor-wise and we can compute the centralizer $\dim Z_{\g{p}}(x)$ of an arbitrary vector $x=(w,z,-w-z)\in\g{p}$ by considering the following subcases:
	\begin{align*}
	w\neq 0,z=0, & & [(w,0,-w),(w',z',-w'-z')]=0 &\Leftrightarrow w',z'\in Z_\g{k}(w), \\
	w=0,z\neq 0, & & [(0,z,-z),(w',z',-w'-z')]=0 &\Leftrightarrow w',z'\in Z_\g{k}(z),  \\
	w,z\neq 0, & & [(w,z,-w-z),(w',z',-w'-z')]=0 &\Rightarrow w'\in Z_\g{k}(w), z'\in Z_\g{k}(z). 
	\end{align*}
	In the first two cases we have $\dim Z_\g{p}(x)=2\dim Z_\g{k}(w)$ and $\dim Z_\g{p}(x)=2\dim Z_\g{k}(z)$, respectively, whereas in the third case we have $\dim Z_\g{p}(x)\leq \dim Z_\g{k}(w)+\dim Z_\g{k}(z)$. Thus, by Proposition~\ref{prop:centralizer} we get
	\[
	b(G/H)=\max_{x\in\g{p}\setminus\{0\}}\dim Z_\g{p}(x)= 2 \max_{w\in\g{k}\setminus\{0\}}\dim Z_\g{k}(w)=2b(K),
	\]
	from where the first claim in Theorem~\ref{THM:Wilkings_products} holds. 
	
	Finally, the claim that $K^2$ admits a metric of $\Ric_{k}>0$ for $k=2b(K)$ which is left-invariant and invariant under right $\Delta K$-diagonal multiplication is a direct consequence of the existence of a (normal) homogeneous metric on $K^3/\Delta K$ with $\Ric_{2b(K)}>0$, along with Proposition~\ref{prop:KxG}~(c). 	
\end{proof}

\subsection{Circle bundles over projective spaces}\label{SEC:Witte_spaces}

Here, we discuss properties of a family of homogeneous spaces $M_{k,l}^{p,q}$ that are circle bundles over products of complex projective spaces. We follow the notation in the article by Wang and Ziller \cite{WZ90}.

Consider the product $\sph^{2p+1}\times\sph^{2q+1}$ with $1\leq p\leq q$. The Hopf action on each factor yields a free $\u{1}\times\u{1}$-action on $\sph^{2p+1}\times\sph^{2q+1}$. Let $k,l$ be coprime integers and consider the subgroup $\u{1}^{l,-k}$ from \eqref{EQ:circle_torus} in Subsection~\ref{SS:fat_homogeneous_bundles}. The corresponding quotient is a simply connected homogeneous space \cite[p.~231]{WZ90}, which we denote by
\[
M_{k,l}^{p,q}\defeq \sph^{2p+1}\times_{\u{1}^{l,-k}}\sph^{2q+1}=\frac{\u{p+1}\times\u{q+1}}{\u{1}^{l,-k}\times\u{p}\times\u{q}},
\]
where here $\u{1}^{l,-k}$ is embedded in the $\u{1}\times\u{1}$ given by the $\u{1}$-factors in the block diagonal embedding $(\u{1}\times \u{p})\times (\u{1}\times \u{q})<\u{p+1}\times\u{q+1}$. 
The $7$-dimensional spaces $M_{k,l}^{1,2}$ are commonly known as \emph{Witten spaces}. The space $M_{k,l}^{p,q}$ can also be seen as the total space of a circle bundle over $\CP^p\times\CP^q$ with Euler class $k\alpha_1 + l\alpha_2$, where $\alpha_1,\alpha_2$ denote the generators of $H^2(\CP^p;\ZZ), H^2(\CP^q;\ZZ)$, respectively.

It is a routine calculation to check that, for $kl\neq 0$, normal metrics for the above homogeneous description satisfy
\[
b(M_{k,l}^{p,q})=2q+1, \qquad \text{with } \dim M_{k,l}^{p,q} = 2p+2q+1.
\]
Here we are mostly interested in the case $p=1$, where one can alternatively prove that $b(M_{k,l}^{1,q})=2q+1$ as follows: Proposition~\ref{prop:products_homogeneous} in combination with Remark~\ref{REM:bounds_nested_inclusions} implies
\[
2q+1= b(\CP^1\times\CP^q)\leq b(M_{k,l}^{1,q}) \leq b(\sph^{3}\times\sph^{2q+1})=2q+2.
\]
However, the value $2q+2$ is ruled out by Theorem~\ref{THM:extension_Nash_Ber} since, by the homogeneous description above, it follows that $M_{k,l}^{1,q}$ does not split off isometrically a round $2$-sphere. Now recall that $H^4(M_{k,l}^{1,q};\ZZ)=\ZZ_{l^2}$ if $q>1$ \cite[Proposition~2.1]{WZ90}. By varying $q,l$ we obtain infinitely many homogeneous spaces $G/H$ of distinct homotopy type with $b(G/H)=\dim G/H -2$ in every odd dimension $2q+3\geq 7$, as claimed in Subsection~\ref{SS:normal_homogeneous_symmetric} (discussion below Theorem~\ref{THM:observations_table_symmetric_spaces}).

Observe that if $l,q>1$ then $M_{k,l}^{1,q}$ is not even homotopy equivalent to a product $\sph^2\times X$ \cite[p.~233]{WZ90}. The homotopy sequence associated to $\sph^3\times\sph^{2q+1}\to M_{k,l}^{1,q}$ shows that $\pi_d(M_{k,l}^{1,q})=\pi_d(\sph^2)\oplus\pi_d(\sph^{2q+1})$ for any $d$. If $M_{k,l}^{1,q}$ was homotopy equivalent to $\sph^2\times X$, it would follow that the homotopy groups of $X$ equal those of $\sph^{2q+1}$, and the same conclusion would hold for the homology and cohomology groups by the Hurewicz Theorem and Poincar\'e duality. But then the K\"unneth formula for cohomology would yield that $H^4(\sph^2\times X;\ZZ)$ vanishes, contradicting that $H^4(M_{k,l}^{1,q};\ZZ)=\ZZ_{l^2}$.

The spaces $M_{k,l}^{p,q}$ have very interesting properties and have been used to construct examples exhibiting various kinds of phenomena. For instance, fixing $l,q$ and letting $k$ vary in $\ZZ$ gives only finitely many diffeomorphism types among the family $M_{k,l}^{1,q}$ \cite[p.~217]{WZ90}. Equivalently, there is an infinite subfamily of $M_{k,l}^{1,q}$, all of which are diffeomorphic. Taking one of them as a reference (say $M\defeq M_{k_0,l}^{1,q}$) one can pull-back the normal metric on each of the spaces of the family to $M$ via the corresponding diffeomorphism, thus defining infinitely many elements in the moduli space $\mathcal{M}_{\Ric_{\dim M - 2}>0}(M)$ of metrics of $\Ric_{\dim M - 2}>0$ on $M$. It is obvious that there are natural inclusions
$$
\mathcal{M}_{\Ric_{\dim M - 2} >0}(M) \subset \mathcal{M}_{\Ric >0}(M) \subset \mathcal{M}_{\scal >0}(M)
$$
into the moduli spaces of metrics of $\Ric>0$ and positive scalar curvature, respectively.

Dessai, Klaus and Tuschmann \cite{DKT18} used the Kreck-Stolz invariant to show that, for $q$ even and for many odd values of $l$, the classes of metrics belong to different path components of $\mathcal{M}_{\scal >0}(M)$ and concluded that the same holds for $\mathcal{M}_{\Ric >0}(M)$. Our observation above shows that they actually belong to different path components of $\mathcal{M}_{\Ric_{\dim M - 2} >0}(M)$.

\section{Symmetric Spaces}\label{SEC:symmetric_spaces}
The purpose of this section is to determine, for each symmetric space $G/K$ of compact type, the lowest $k$ for which the symmetric metric on $G/K$ satisfies $\Ric_k>0$. Thus, in Subsection~\ref{subsec:symmetric_prelim} we review the main tools needed and derive the recipe for the calculation of $b(G/K)$. Then, in Subsection~\ref{subsec:symmetric_properties} we discuss further properties of the values $b(G/K)$, from where  Theorem~\ref{THM:observations_table_symmetric_spaces} in Subsection~\ref{SS:normal_homogeneous_symmetric} will follow. We refer the reader to the recent work~\cite{AQZ20} for an alternative discussion of the problem addressed in this section.

\subsection{Root space decomposition and recipe to compute $b(G/K)$}\label{subsec:symmetric_prelim}
Let $M$ be a (not nec\-es\-sarily irreducible) symmetric space of compact type. Let $(G,K)$ be a symmetric~pair representing $M$ and consider the decomposition $\g{g}=\g{k}\oplus\g{p}$, where $\g{p}$ is the orthogonal com\-ple\-ment of $\g{k}$ in $\g{g}$ with respect to the Killing form $B_\g{g}$ of $\g{g}$. As usual, we can identify $\g{p}$ with the tangent space $T_oM$, where $o$ is a fixed point of $K$. We also have $[\g{k},\g{p}]\subset\g{p}$ and~$[\g{p},\g{p}]\subset\g{k}$.

Recall from Proposition~\ref{prop:centralizer} that for each non-zero $x\in \g{p}$, we have
\[
b(M,x)=\dim Z_\g{p}(x)=\dim\{y\in\g{p}:[x,y]=0\}.
\]
Our purpose is then to determine
\[
b(M)=\max_{x\in\g{p}\setminus\{0\}} b(M,x)
\]
for each symmetric space $M$. We will use a direct approach by means of the classical theory of root systems; for more information on the restricted root decomposition in the compact setting, see~\cite[Chapter~VI]{Loos}. 
We note that the value $b(-)$ is clearly invariant under finite quotients and coverings, i.e., $b(M)=b(\widetilde{M})$, where $\widetilde{M}$ is the universal cover~of~$M$. Moreover, all symmetric pairs $(G,K)$ representing $M$ give rise to the same value $b(M)=b(G/K)$.



Fix $x\in\g{p}$, $x\neq 0$. Consider a maximal abelian subspace $\g{a}$ of $\g{p}$ containing $x$. Let us put $r\defeq \dim\g{a}=\rank M$. The endomorphisms $\ad(Y)\in\mathrm{End}(\g{g})$, $Y\in\g{a}$, form a commuting family of $B_\g{g}$-skew-adjoint operators, and hence diagonalize simultaneously. Moreover, the self-adjoint operators $\ad(Y)^2$, $Y\in\g{a}$, leave both $\g{k}$ and $\g{p}$ invariant. For any linear form $\lambda\in\g{a}^*$, let us consider 
\begin{align*}
\g{k}_\lambda&=\{X\in\g{k}:\ad(Y)^2X=-\lambda(Y)^2X \text{ for all }Y\in\g{a}\},
\\
\g{p}_\lambda&=\{X\in\g{p}:\ad(Y)^2X=-\lambda(Y)^2X \text{ for all }Y\in\g{a}\}. 
\end{align*}
Then $\g{p}_0=\g{a}$, $\g{p}_{\lambda}=\g{p}_{-\lambda}$, $\g{k}_{\lambda}=\g{k}_{-\lambda}$, and we have the bracket relations $[\g{k}_\lambda,\g{k}_\mu]\subset\g{k}_{\lambda+\mu}+\g{k}_{\lambda-\mu}$, $[\g{k}_\lambda,\g{p}_\mu]\subset\g{p}_{\lambda+\mu}+\g{p}_{\lambda-\mu}$, and $[\g{p}_\lambda,\g{p}_{\mu}]\subset \g{k}_{\lambda+\mu}+\g{k}_{\lambda-\mu}$, for any $\lambda,\mu\in\g{a}^*$. Let us denote by $\Sigma=\{\lambda\in\g{a}^*:\lambda\neq 0,\, \g{p}_\lambda\neq 0\}$ the set of roots of $M\cong G/K$ with respect to $\g{a}\subset\g{p}$. It is well-known that $\Sigma$ constitutes a (possibly non-reduced) root system on the dual space of~$\g{a}$. 
Moreover, we have the $B_\g{g}$-orthogonal decompositions
\[
\g{k}=\g{k}_0\oplus\bigoplus_{\lambda\in\Sigma^+}\g{k}_\lambda\qquad\text{and}\qquad \g{p}=\g{a}\oplus\bigoplus_{\lambda\in\Sigma^+}\g{p}_\lambda,
\]
where $\Sigma^+\subset \Sigma$ is any choice of positive roots (i.e.\ $\Sigma$ is the disjoint union of $\Sigma^+$ and $-\Sigma^+$, and if the sum of two positive roots is a root, then it is also positive).


Select $\Lambda=\{\alpha_1,\dots,\alpha_r\}\subset \Sigma$ a set of simple roots for the root system $\Sigma$ such that $x$ belongs to the closed Weyl chamber determined by the relations $\alpha_i\geq 0$, for all $i\in\{1,\dots,r\}$. Then $\Lambda$ determines a set of positive roots $\Sigma^+=\left\{\sum_{i=1}^r a_i \alpha_i\in\Sigma:a_i\in\mathbb{Z}_{\geq 0}\right\}$. Note that if $\lambda=\sum_{i=1}^r a_i \alpha_i\in \Sigma$ is an arbitrary root (where of course the coefficients $a_i\in\mathbb{Z}$ are either all non-negative or all non-positive), then $\lambda(x)=0$ if and only if $\lambda$ is a linear combination of roots in $\Phi=\Phi_x=\{\alpha_i\in\Lambda:\alpha_i(x)=0\}$. Therefore, if we denote by $\Sigma_\Phi^+=\Sigma^+\cap\spann\Phi$ the set of positive roots spanned by $\Phi$, it follows from the definition of the subspaces $\g{p}_\lambda$~that
\begin{equation}\label{eq:Z_p(x)decomp}
Z_{\g{p}}(x)=\g{a}\oplus\bigoplus_{\lambda\in\Sigma^+_\Phi}\g{p}_\lambda.
\end{equation}
This is easily checked to be a Lie triple system (i.e.\ $[[Z_{\g{p}}(x),Z_{\g{p}}(x)],Z_{\g{p}}(x)]\subset Z_{\g{p}}(x)$), and hence $Z_{\g{p}}(x)\cong T_o F_\Phi$ is the tangent space of a totally geodesic submanifold $F_\Phi$ containing $o\in M$. As a totally geodesic submanifold of a symmetric space of compact type, $F_\Phi$ is a symmetric space of non-negative curvature. Indeed, in this case, $F_\Phi$ is (up to finite quotients) of the form $B_\Phi \times \mathbb{T}^{r-|\Phi|}$, where $B_\Phi$ is the compact semisimple factor with associated Lie triple system $(\oplus_{\lambda\in\Phi}Y_\lambda)\oplus(\oplus_{\lambda\in\Sigma^+_\Phi}\g{p}_\lambda)$, where $Y_\lambda\in\g{a}$ is determined by $B_\g{g}(Y_\lambda, Y)=\lambda(Y)$ for all $Y\in\g{a}$, and $\mathbb{T}^{r-|\Phi|}$ is the Euclidean factor with Lie triple system $\cap_{\lambda\in\Phi}\ker\lambda\subset\g{a}$. 
 

Since our aim is to determine $b(M)=\max_{x\in\g{p}\setminus\{0\}} \dim Z_{\g{p}}(x)$, in view of \eqref{eq:Z_p(x)decomp} we can restrict our attention to maximal proper subsets $\Phi$ of $\Lambda$. Indeed, if $x$ is such that $\Phi_x=\Lambda$, this implies that $x=0$, since $\Lambda$ is a basis of the dual space of $\g{a}$; and if $|\Phi_x|\leq r-2$, then we can consider $\Phi'\subsetneq \Lambda$ properly containing $\Phi_x$, and take a non-zero $x'\in\cap_{\alpha_i\in\Phi'}\ker\alpha_i$, which would then satisfy $Z_{\g{p}}(x)\subset Z_{\g{p}}(x')$ by \eqref{eq:Z_p(x)decomp}.

Thus, let us put $\Phi=\Phi^k=\Lambda\setminus\{\alpha_k\}$, for each $k\in\{1,\dots,r\}$. Then we can choose $x=Y^k$, where $Y^k$ is the vector in $\g{a}$ such that $\alpha_i(Y^k)=\delta_{ik}$, for each $i\in\{1,\dots,r\}$. We have to determine $\dim Z_{\g{p}}(x)=\dim B_{\Phi^k} \times \mathbb{T}^1$ for each $k$, and then $b(M)$ will be the maximum of all these values, where $k$ runs through $\{1,\dots,r\}$. This is an easy combinatorial problem that can be done with the help of the Dynkin diagram of the root system $\Sigma$ of $M$ along with the multiplicities of the roots (i.e.\ the dimensions of each $\g{p}_\lambda$). The approach is the following. In the Dynkin diagram of $M$ delete the node corresponding to $\alpha_k$  (and any edge connected to it). This gives rise to a new Dynkin diagram which, together with the multiplicities attached to each one of the remaining nodes, must then correspond in a unique way to some symmetric space of compact type (up to finite quotients and coverings). Such symmetric space is precisely $B_{\Phi^k}$. Finally, we observe that, for each symmetric space $M$ and each $k\in\{1,\dots,r\}$, $\dim B_{\Phi^k}$ takes the value of a quadratic polynomial in $k$ with positive leading coefficient. Thus its maximum is achieved at $k=1$ or $k=r$, from where we directly obtain $b(M)$ by just adding $1$ to such maximum. All this study can be done case by case for each irreducible symmetric space; see Table~\ref{table:b_symmetric} at the end of the article.

We note that Table~\ref{table:b_symmetric} contains some redundancies stemming from the isomorphisms existing between some low dimensional symmetric spaces. These isomorphisms can be consulted in~\cite[pp.~519-520]{Helgason}, but can also be derived from Table~\ref{table:b_symmetric}, as the Dynkin diagram and the multiplicities of the simple roots determine the simply connected compact symmetric~space.


\subsection{Discussion of particular cases and proof of Theorem~\ref{THM:observations_table_symmetric_spaces}}\label{subsec:symmetric_properties}
We now discuss the values $b(M)$ in various relevant cases.  Theorem~\ref{THM:observations_table_symmetric_spaces} will follow from this discussion.

First, observe that the first claim in Theorem~\ref{THM:observations_table_symmetric_spaces} just refers to Table~\ref{table:b_symmetric} for the values of~$b(M)$ of irreducible symmetric spaces $M$, whereas the second claim follows from a direct application of Proposition~\ref{prop:products_homogeneous} to the De Rham decomposition $\widetilde{M}=M_1\times \dots \times M_s$ of the universal cover of $M$ into irreducible factors.

Let us prove Item~\eqref{item:Da} in Theorem~\ref{THM:observations_table_symmetric_spaces}. We start by noting that $b(M)=1$ if and only if $M$ is of rank one, but if $M$ has higher rank, then $b(M)\geq 2\rank M-1\geq 3$. Indeed, for any maximal proper $\Phi\subset\Lambda$ we have $\dim \oplus_{\lambda\in \Sigma^+_\Phi}\g{p}_\lambda\geq |\Phi|= r-1$; then from \eqref{eq:Z_p(x)decomp} it follows that 
\begin{equation}\label{eq:lower_bound}
b(M)\geq \dim\g{a} +\dim \oplus_{\lambda\in \Sigma^+_\Phi}\g{p}_\lambda\geq r+(r-1)=2r-1.
\end{equation}
Assume $b(M)=2\rank M-1$. Then, in view of~\eqref{eq:lower_bound}, for any maximal proper $\Phi\subset\Lambda$ we must have $\lvert\Sigma^+_\Phi\rvert=r-1$ (and hence $\Sigma_\Phi^+=\Phi$) and $\dim\g{p}_\lambda=1$ for all $\lambda\in\Sigma_\Phi^+$. This means that, for any maximal proper $\Phi\subset\Lambda$, $B_\Phi$ is a symmetric space of type $(\mathsf{A}_1)^{r-1}$ (equivalently, when one removes a node from the Dynkin diagram of $M$, one obtains a totally disconnected graph with no double nodes) and with all multiplicities equal to $1$. If $r=1$, this is trivially satisfied. If $M$ is irreducible, it easily follows from the Dynkin diagrams and multiplicities in Table~\ref{table:b_symmetric} that $M$ must be isometric to $\su{3}/\so{3}$, $\grp{2}{5}{\R}$ or $\mathsf{G}_2/\so{4}$. If $M$ is reducible, all factors have to be of type $\mathsf{A}_1$ with multiplicity $1$, that is, $M$ is covered by a product of $2$-spheres. All in all, this proves Theorem~\ref{THM:observations_table_symmetric_spaces}~\eqref{item:Da}.

Now assume $b(M)=\dim M-1$. Then, there exists $x\in\g{a}$ such that $\dim Z_{\g{p}}(x)=\dim \g{p}-1$. By \eqref{eq:Z_p(x)decomp} there is exactly one positive root $\alpha\in \Sigma^+$ such that $\g{p}_\alpha$ is not contained in (and is orthogonal to) $Z_\g{p}(x)$, and we have $\dim \g{p}_\alpha=1$ for such root. Thus, $\{\alpha\}=\Sigma^+\setminus\Sigma^+_\Phi$. By the properties of root systems, $\Sigma$ is then reducible with a rank one factor (the span of $\alpha$). Thus, the universal cover of $M$ has a rank one factor, which must necessarily be isometric to $\mathbb{S}^2$, since $\dim \g{p}_\alpha=1$. The converse is direct from Proposition~\ref{prop:products_homogeneous}.

Let us consider the case $b(M)=\dim M-2$. Then $\dim Z_{\g{p}}(x)=\dim \g{p}-2$ for some $x\in\g{a}$, and by \eqref{eq:Z_p(x)decomp} we deduce that either there is exactly one $\alpha\in\Sigma^+\setminus\Sigma_\Phi^+$, and it satisfies $\dim\g{p}_\alpha=2$, or there are two different roots $\alpha,\beta$ such that $\{\alpha,\beta\} =\Sigma^+\setminus\Sigma_\Phi^+$ and both with multiplicity~$1$. In the first case, similarly as above, the universal cover of $M$ splits off a factor isometric to $\su{2}=\mathbb{S}^3$. Let us assume we are in the second case. If the roots in $\Sigma^+\setminus\spann\{\alpha,\beta\}$ are perpendicular to $\spann\{\alpha,\beta\}$, then $\Sigma$ is reducible with a rank two factor of type $\mathsf{A}_2$ (the span of $\alpha$ and $\beta$); since the multiplicities of the roots of this factor are all $1$, $M$ locally splits off a factor isometric to $\su{3}/\so{3}$. On the contrary, if there is a root in $\Sigma^+\setminus\spann\{\alpha,\beta\}$ not perpendicular to $\spann\{\alpha,\beta\}$, we can assume without restriction of generality that there exists $\lambda\in \Sigma\setminus\spann\{\alpha,\beta\}$ not perpendicular to $\alpha$, and also $\langle \lambda,\alpha\rangle >0$. Then, again by the properties of root systems, we have that $\lambda-\alpha\in\Sigma$, but $\lambda-\alpha\notin\Sigma_\Phi$ (since $\lambda\in\Sigma_\Phi$ and $\alpha\notin\Sigma_\Phi$). This would imply that $\lambda-\alpha\in\{\alpha,\beta,-\alpha,-\beta\}$, which contradicts $\lambda\notin\spann\{\alpha,\beta\}$. The discussion in this paragraph and the previous one implies Theorem~\ref{THM:observations_table_symmetric_spaces}~\eqref{item:Db}.

\begin{remark}\label{rem:factor}
	In the particular case where $M$ is a compact semisimple Lie group $G$, the discussion in the previous two paragraphs implies that $b(G)=\dim G-1$ is impossible, whereas $b(G)=\dim G-2$ if and only if $\g{g}$ is a Lie algebra direct sum of the form $\g{su}_2\oplus\g{g}'$, for some compact semisimple  $\g{g}'$. Even more, if for some $x\in\g{g}$ we have $\dim Z_\g{g}(x)=\dim\g{g}-2$, then $(\R x\oplus Z_\g{g}(x)^\perp)\oplus (Z_\g{g}(x)\cap x^\perp)$ yields precisely the splitting $\g{su}_2\oplus\g{g}'$.
\end{remark}
 
In order to prove Items~\eqref{item:Dc} and~\eqref{item:Dd} of Theorem~\ref{THM:observations_table_symmetric_spaces}, we first show that if $b(M)\leq (\dim M)/2$, then $M$ is irreducible. Indeed, by Proposition~\ref{prop:products_homogeneous}, $b(M)=\max\{b(M_j)-\dim M_j\}+\dim M$, where $\widetilde{M}=M_1\times \dots\times M_s$. Then $b(M)\leq (\dim M)/2$ is equivalent to $\min\{\dim M_j-b(M_j)\}\geq (\dim M)/2$, from where we get $\dim M_j\geq (\dim M)/2$ for all $j\in\{1,\dots, s\}$. If $M$ is reducible, this necessarily implies $\widetilde{M}=M_1\times M_2$ with $\dim M_1=\dim M_2$. But then, assuming without restriction of generality that $b(M_1)\leq b(M_2)$, we would have $\dim M_1=(\dim M)/2\geq b(M)=b(M_2)+\dim M_1>\dim M_1$, which yields a contradiction.
The rest of claims in Theorem~\ref{THM:observations_table_symmetric_spaces} \eqref{item:Dc}-\eqref{item:Dd} follow from elementary case-by-case calculations making use of the values of $\dim M$ and $b(M)$ stated in Table~\ref{table:b_symmetric} at the end of the article.

For convenience of the reader, we list the simply connected irreducible symmetric spaces~$M$ of rank at least $2$ with $b(M)\leq 6$ in Table~\ref{table:low_b_symmetric} below, as referenced in Theorem~\ref{THM:observations_table_symmetric_spaces}~\eqref{item:De}.

\begin{table}[h!]
	\caption{Irreducible symmetric spaces with $3\leq b(M)\leq 6$.}
	\label{table:low_b_symmetric}
	\begin{tabular}{c@{\hspace{10ex}}c}
	\begin{tabular}{ccc}\hline
		$M$ & $\dim M$ & $b(M)$
		\\ \hline
		$\su{3}/\so{3}$ & $5$ & \multirow{3}{*}{$3$}
		\\
		$\grp{2}{5}{\R}$ & $6$ &
		\\
		$\mathsf{G}_2/\so{4}$ & $8$
		\\ \hline
		$\su{3}$ & $8$ & \multirow{4}{*}{$4$}
		\\
		$\grp{2}{6}{\R}$ & $8$ &
		\\
		$\sp{2}$ & $10$ & 
		\\
		$\mathsf{G}_2$ & $14$ &
		\\ \hline
	\end{tabular}
&
	\begin{tabular}{ccc}\hline
	$M$ & $\dim M$ & $b(M)$
	\\ \hline
	$\grp{2}{7}{\R}$ & $10$ & \multirow{2}{*}{$5$}
	\\
	$\gr{2}{5}{\C}$ & $12$ &
	\\ \hline
	$\su{4}/\so{4}$ & $9$ & \multirow{4}{*}{$6$}
	\\
	$\grp{2}{8}{\R}$ & $12$ & 
	\\
	$\su{6}/\sp{3}$ & $14$ &
	\\
	$\gr{2}{4}{\H}$ & $16$ &
	\\ \hline
	\end{tabular}
	\end{tabular}
\end{table}

In the case of rank $2$ symmetric spaces, Proposition~\ref{prop:centralizer} implies that $b(M)$ is the highest codimension in $T_{o}M$ of an orbit of the isotropy representation of $M=G/K$, restricted to the unit sphere of $T_{o}M$. Such restrictions of isotropy representations of rank $2$ symmetric spaces give rise to all cohomogeneity one actions on round spheres (up to orbit equivalence), or equivalently, to all homogeneous isoparametric families of hypersurfaces on round spheres~\cite[\S2.9.6]{BCO16}. Any such action has exactly two singular orbits, and the remaining ones are hypersurfaces of the sphere with $g\in\{1,2,3,4,6\}$ constant principal curvatures and multiplicities $m_i$ satisfying $m_i= m_{i+2}$ (indices modulo $g$). (The integer $g$ agrees with the number $|\Sigma^+|$ of positive roots of $M$, and the $m_i$ are the multiplicities of such roots.) The codimension (in the sphere) of a singular orbit coincides with $m_i+1$, for some $i\in\{1,\dots, g\}$. Hence, $b(M)=2+\max_{i\in\{1,\dots, g\}} m_i$. In Table~\ref{table:b_symmetric_rank2}, all simply connected compact semisimple symmetric spaces $M$ of rank $2$ are shown, together with their dimensions, the number $g$ of principal curvatures and the multiplicities of the corresponding homogeneous hypersurfaces, from where one directly obtains $b(M)$. 

\begin{table}[h!]
	\caption{Symmetric spaces of rank two.}\label{table:b_symmetric_rank2}
	\begin{tabular}{ccccc}\hline
		$g$ & Multiplicities & $M$ & $\dim M$ & $b(M)$
		\\ \hline
		$2$ & $(k, l-k-2)$ & $\mathbb{S}^{k+1}\times \mathbb{S}^{l-k-1}$ & $l$ & $\max\{k+2, l-k\}$
		\\
		$3$ & $1$ & $\su{3}/\so{3}$ & $5$ & $3$
		\\
		$3$ & $2$ & $\su{3}$ & $8$ & $4$
		\\
		$3$ & $4$ & $\su{6}/\sp{3}$ & $14$ & $6$
		\\
		$3$ & $8$ & $\mathsf{E}_6/\mathsf{F}_4$ & $26$ & $10$
		\\
		$4$ & $(2,2)$ & $\sp{2}$ & $10$ & $4$
		\\
		$4$ & $(4,5)$ & $\so{10}/\u{5}$ & $20$ & $7$
		\\
		$4$ & $(1,k-2)$  & $\grp{2}{k+2}{\R}$, $k\geq 3$ & $2k$ & $k$
		\\
		$4$ & $(2,2k-3)$ & $\gr{2}{k+2}{\C}$, $k\geq 2$     & $4k$ & $\max\{4,2k-1\}$
		\\
		$4$ & $(4,4k-5)$ & $\gr{2}{k+2}{\H}$, $k\geq 2$   & $8k$ & $\max\{6,4k-3\}$
		\\
		$4$ & $(9,6)$ & $\mathsf{E}_6/\spin{10} \u{1}$ & $32$ & $11$
		\\
		$6$ & $(1,1)$ & $\mathsf{G}_2/\so{4}$ & $8$ & $3$
		\\
		$6$ & $(2,2)$ & $\mathsf{G}_2$ & $14$ & $4$
		\\
		\hline
	\end{tabular}
\end{table}

\section{Fat Homogeneous bundles: Metrics and topology}\label{SEC:fat_bundles}
	
The aim of this section is to investigate the $k^\textrm{th}$-intermediate Ricci curvature of the total spaces of certain fat bundles, as well as the topology of some of them. 
	In Subsection~\ref{SS:fat_bundles} we recall the notion of fat homogeneous bundle and prove Theorem~\ref{THM:Wallach_generalization}. 
	 In Subsection~\ref{SS:fat_bundles_examples} we discuss various families of fat homogeneous bundles and obtain  Theorem~\ref{THM:homogeneous_spaces_fat} as a consequence, which in particular implies Theorem~\ref{THM:homogeneous_simple_version}. In Subsection~\ref{SS:fat_bundles_topology} we compute the topology of the generalized Aloff-Wallach spaces.
	
	
	\subsection{Metrics on fat homogeneous bundles}\label{SS:fat_bundles}
		
				The notion of \emph{fatness} was introduced by Weinstein to study Riemannian submersions with totally geodesic fibers and positive vertizontal curvatures; see the discussion in \cite[Section 2.8]{GW09} or \cite{Zi99,FZ11}. In this work we are only interested in the case where the submersion is given by a \emph{homogeneous bundle}. Recall that a homogeneous bundle is the bundle $K/H\to G/H\to G/K$ resulting from a nested triple of compact Lie groups $H<K<G$. Let $\mathfrak{h}\subset\mathfrak{k}\subset\mathfrak{g}$ be the respective Lie algebras. We fix a bi-invariant metric $\langle\cdot,\cdot\rangle$ on $G$ and use it to define:
		\[
			\mathfrak{m}\defeq\mathfrak{h}^\perp \cap\mathfrak{k}, \qquad \mathfrak{p}\defeq \mathfrak{k}^\perp.
		\]
		Recall that there are canonical identifications
		\[
			T_{eH} K/H\simeq \mathfrak{m},\qquad T_{eK} G/K\simeq\mathfrak{p}  ,\qquad T_{eH} G/H\simeq\mathfrak{h}^\perp=\mathfrak{m}\oplus\mathfrak{p}.
		\]
With this notation, there is the following characterization of fat homogeneous bundles which we take as a definition (see \cite[Proposition~2.8.3]{GW09}).

	
\begin{definition}\label{DEF:fat}
A homogeneous bundle is fat if $[x,y]\neq 0$ for all non-zero $x\in\mathfrak{m}$ and~$y\in\mathfrak{p}$. 
\end{definition}
		
We will now describe the metrics that will be used in the proof of Theorem~\ref{THM:Wallach_generalization}. For any $t>0$, consider the bi-invariant product metric $(K\times G,t\langle\cdot,\cdot\rangle|_\mathfrak{k} + \langle\cdot,\cdot\rangle)$. The quotient with respect to the action by right multiplication $k_1(k_2,g)=(k_2k_1^{-1},gk_1^{-1})$, $g\in G$, $k_1,k_2\in K$, of the diagonal subgroup $\Delta K< K\times G$ is a normal homogeneous metric on $(K\times G)/\Delta K$, denoted by $\langle\cdot,\cdot\rangle_t$. Note that Proposition~\ref{prop:KxG} applies in this case. Hence, the natural diffeomorphism $(K\times G)/\Delta K \cong G$ given by $[k,g]\mapsto gk^{-1}$ sends the action by left multiplication $(k_1,g_1)[k_2,g_2]=[k_1k_2,g_1g_2]$ to the $(K\times G)$-action on $G$ given by $(k_1,g_1)g_2=g_1g_2k_1^{-1}$. In particular, the push-forward of the left $(K\times G)$-invariant metric $\langle\cdot,\cdot\rangle_t$ on $(K\times G)/\Delta K$ results in a left-invariant and right $K$-invariant metric on $G$, which we also denote by $\langle\cdot,\cdot\rangle_t$. This discussion can be summarized in:
$$
			(K\times G,t\langle\cdot,\cdot\rangle|_\mathfrak{k} + \langle\cdot,\cdot\rangle) \to ((K\times G)/\Delta K,\langle\cdot,\cdot\rangle_t) \cong (G,\langle\cdot,\cdot\rangle_t),
$$
where the first arrow is a Riemannian submersion. 

		The right action by $H$ on $(G,\langle\cdot,\cdot\rangle_t)$ is by isometries since $H<K$, and hence $\langle\cdot,\cdot\rangle_t$ induces a metric $q_t$ on $G/H$ such that the quotient map
		\begin{equation}\label{eq:qt}
			(G,\langle\cdot,\cdot\rangle_t) \to (G/H, q_t)
		\end{equation}
		is a Riemannian submersion. 
		Since $\langle\cdot,\cdot\rangle_t$ is left-invariant and the left $G$-action on $G$ commutes with the right $H$-action, it follows that $q_t$ is $G$-invariant, i.e. $q_t$ is homogeneous on $G/H$. Observe that $\langle\cdot,\cdot\rangle_t$ and $q_t$ are of $\sec\geq 0$ by O'Neill's formula. We will need the following result showing that, in some situations, $\langle\cdot,\cdot\rangle_t$ has fewer zero-curvature planes than the bi-invariant metric $\langle\cdot,\cdot\rangle$; see \cite[Satz~231]{Es84} or \cite[Lemma 4.2]{Zi07}.
		
		\begin{theorem}[Eschenburg]\label{THM:Eschenburg}
			Assume $(G,K)$ is a compact symmetric pair (i.e. $[\mathfrak{p},\mathfrak{p}]\subset\mathfrak{k}$). 
			Then, for any $t>0$, the vectors $x,y\in\mathfrak{g}$ span a zero-curvature plane with respect to $\langle\cdot,\cdot\rangle_t$ if and only if the following is satisfied:
			\[
				[x,y]=[x_\mathfrak{k},y_\mathfrak{k}]=[x_\mathfrak{p},y_\mathfrak{p}]=0.
			\]
		\end{theorem}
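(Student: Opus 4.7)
My plan is to exploit the Riemannian submersion
\[
\pi\colon (K\times G,\,t\langle\cdot,\cdot\rangle|_{\g{k}}+\langle\cdot,\cdot\rangle)\to (G,\langle\cdot,\cdot\rangle_t)
\]
of \eqref{eq:gbit}, together with O'Neill's formula. The total space is a product of compact Lie groups with bi-invariant metrics, on which the sectional curvature is $\tfrac{1}{4}|[\cdot,\cdot]|^2$; hence a plane upstairs has zero sectional curvature if and only if the Lie-algebra bracket of its spanning vectors vanishes. Since O'Neill gives $\sec_{(G,\langle\cdot,\cdot\rangle_t)}(x,y)\geq \sec_{K\times G}(\tilde x,\tilde y)$, the downstairs plane has zero sectional curvature exactly when $[\tilde x,\tilde y]=0$ in $\g{k}\oplus\g{g}$; once this holds the vertical-bracket term of O'Neill is automatically zero.

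Next I would compute the horizontal lift explicitly. The $K$-action in \eqref{eq:gbit} is generated at $(e,e)$ by vectors $(T,-T)$ with $T\in\g{k}$, and the quotient map may be taken to be $(k,g)\mapsto gk$ with differential $(a,b)\mapsto a+b$ at the identity. Imposing that $\tilde x=(a,b)$ is orthogonal to $(T,-T)$ under the rescaled metric and projects to $x=x_{\g{k}}+x_{\g{p}}$ yields
\[
\tilde x = \Bigl(\tfrac{x_{\g{k}}}{1+t},\;\tfrac{t}{1+t}x_{\g{k}}+x_{\g{p}}\Bigr).
\]
Setting $[\tilde x,\tilde y]=0$ in $\g{k}\oplus\g{g}$ produces two bracket equations. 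The first component gives $[x_{\g{k}},y_{\g{k}}]=0$, and the second, after using this, reduces to
\[
\tfrac{t}{1+t}\bigl([x_{\g{k}},y_{\g{p}}]+[x_{\g{p}},y_{\g{k}}]\bigr)+[x_{\g{p}},y_{\g{p}}]=0.
\]

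The symmetric pair hypothesis $[\g{p},\g{p}]\subset\g{k}$, combined with $[\g{k},\g{p}]\subset\g{p}$, is used decisively at this point: the two summands above lie in $\g{p}$ and $\g{k}$ respectively, so the equation splits into $[x_{\g{p}},y_{\g{p}}]=0$ and $[x_{\g{k}},y_{\g{p}}]+[x_{\g{p}},y_{\g{k}}]=0$. Adding the four pieces of $[x,y]=[x_{\g{k}},y_{\g{k}}]+[x_{\g{k}},y_{\g{p}}]+[x_{\g{p}},y_{\g{k}}]+[x_{\g{p}},y_{\g{p}}]$ shows that these are equivalent to the claimed trio $[x,y]=[x_{\g{k}},y_{\g{k}}]=[x_{\g{p}},y_{\g{p}}]=0$; conversely, given the trio, the mixed bracket identity is recovered by subtraction. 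Left-invariance of $\langle\cdot,\cdot\rangle_t$ propagates the characterization from $e$ to any other point of $G$.

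The only step I expect to require genuine care is the O'Neill dichotomy in the opening paragraph, namely ensuring that zero sectional curvature downstairs really is equivalent to $[\tilde x,\tilde y]=0$ rather than just to vanishing of the vertical part. This holds here because in the bi-invariant total space the nonnegative summand $\tfrac{1}{4}|[\tilde x,\tilde y]|^2$ vanishes exactly when the full bracket does, which then kills the O'Neill A-tensor term for free. The remainder is routine $\g{k}\oplus\g{p}$ bookkeeping made possible by the symmetric pair hypothesis.
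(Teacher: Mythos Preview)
The paper does not supply its own proof of Theorem~\ref{THM:Eschenburg}; it is quoted with references to \cite[pp.~22--23]{Es84} and \cite[Lemma~4.2]{Zi07}. Your argument via the Riemannian submersion \eqref{eq:gbit} and O'Neill's formula is a correct and standard route to the result, and your horizontal-lift computation together with the $\g{k}\oplus\g{p}$ splitting of the second-component bracket is exactly how the argument goes.

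One step deserves sharper justification. For the converse direction you need that $[\tilde x,\tilde y]=0$ in $\g{k}\oplus\g{g}$ forces the O'Neill $A$-tensor term to vanish; you say this is ``for free'', but the identity $A_{\tilde x}\tilde y=\tfrac12[\tilde x,\tilde y]^V$ with the \emph{Lie-algebra} bracket (as opposed to the vector-field bracket of horizontal extensions) is not automatic for an arbitrary submersion from a bi-invariant Lie group. It holds here because, after the isometry $(k,g)\mapsto(k^{-1},g)$ of $K\times G$, the free $K$-action becomes right multiplication by a closed subgroup of $K\times G$, so the horizontal distribution is left-invariant and left-invariant extensions of horizontal vectors remain horizontal; then $\nabla_{\tilde X}\tilde Y=\tfrac12[\tilde x,\tilde y]$ at the identity and the formula follows. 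Alternatively, invoke Tapp's result \cite{tapp} (which the paper itself uses in the proof of Lemma~\ref{lem:hperp}): horizontal zero-curvature planes in a bi-invariant Lie group project to zero-curvature planes in the base. With either justification in place your proof is complete. (Incidentally, your sign convention for the vertical space, $(T,-T)$, differs from the paper's $(u,u)$ in the proof of Lemma~\ref{lem:hperp}; the two are intertwined by that same isometry and give the same metric $\langle\cdot,\cdot\rangle_t$, so this is harmless.)
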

		
		We are ready to prove Theorem~\ref{THM:Wallach_generalization}, which we restate here in a more precise manner:
		
		\begin{theorem}\label{THM:Wallach_generalization_refined}
			Let $H < K < G$ be a fat homogeneous bundle with $G/K$ a symmetric space. 
			Then, for any $t>0$, the homogeneous metric $q_t$ defined on $G/H$ as in \eqref{eq:qt} has $\Ric_k>0$ for $k=b(G/K)$.
		\end{theorem}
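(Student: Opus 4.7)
The plan is to combine O'Neill's formula for the Riemannian submersion \eqref{eq:qt} with Eschenburg's theorem (Theorem~\ref{THM:Eschenburg}) to reduce $\Ric_k(q_t)>0$ to a linear-algebraic statement about brackets in $\mathfrak{m}\oplus\mathfrak{p}$. Since $\langle\cdot,\cdot\rangle_t$ arises as a Cheeger deformation of a bi-invariant metric on $G$, it has $\sec\ge 0$, and by O'Neill so does $q_t$. Identifying $T_{eH}(G/H)\cong\mathfrak{m}\oplus\mathfrak{p}$ via horizontal lifts at $e$, I argue by contradiction: fix orthonormal vectors $x,e_1,\ldots,e_k\in\mathfrak{m}\oplus\mathfrak{p}$ with $k=b(G/K)$ and suppose $\sum_{i=1}^k \sec_{q_t}(x,e_i)=0$. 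Each summand is non-negative, so each vanishes, and O'Neill forces both $\sec(x,e_i)=0$ in $(G,\langle\cdot,\cdot\rangle_t)$ and the vanishing of the A-tensor $A_x e_i$. Eschenburg's theorem then yields
\[
[x,e_i]=[x_\mathfrak{m},(e_i)_\mathfrak{m}]=[x_\mathfrak{p},(e_i)_\mathfrak{p}]=0,
\]
and expanding $[x,e_i]=0$ in $\mathfrak{g}=\mathfrak{k}\oplus\mathfrak{p}$ using $[\mathfrak{k},\mathfrak{p}]\subset\mathfrak{p}$ and $[\mathfrak{p},\mathfrak{p}]\subset\mathfrak{k}$ produces the additional identity $[x_\mathfrak{m},(e_i)_\mathfrak{p}]+[x_\mathfrak{p},(e_i)_\mathfrak{m}]=0$.

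The main step handles the case $x_\mathfrak{p}\ne 0$. Set $V=\spann(x,e_1,\ldots,e_k)$, of dimension $k+1$, and consider the projection $P\colon V\to\mathfrak{p}$, $v\mapsto v_\mathfrak{p}$. The conditions above, extended by linearity, place $P(V)\subset Z_\mathfrak{p}(x_\mathfrak{p})$ and give $[x_\mathfrak{m},v_\mathfrak{p}]+[x_\mathfrak{p},v_\mathfrak{m}]=0$ for every $v\in V$. By fatness (Definition~\ref{DEF:fat}), $\ad(x_\mathfrak{p})\colon\mathfrak{m}\to\mathfrak{p}$ is injective; hence if $v_\mathfrak{p}=0$, the relation forces $[x_\mathfrak{p},v_\mathfrak{m}]=0$ and thus $v_\mathfrak{m}=0$, so $P$ is injective. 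Consequently $k+1=\dim V\le\dim Z_\mathfrak{p}(x_\mathfrak{p})\le b(G/K)=k$, a contradiction.

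The remaining case $x_\mathfrak{p}=0$, in which $x\in\mathfrak{m}$ is nonzero, is the main obstacle. Here the identity $[x_\mathfrak{m},(e_i)_\mathfrak{p}]=0$ combined with fatness forces $(e_i)_\mathfrak{p}=0$ for every $i$, so $V\subset Z_\mathfrak{m}(x)$ and in particular $\dim Z_\mathfrak{m}(x)\ge k+1$. Ruling this out requires the bound $\dim Z_\mathfrak{m}(x)\le b(G/K)$ for every nonzero $x\in\mathfrak{m}$, equivalently $b(K/H)\le b(G/K)$ for the normal metric on the fiber $K/H$ inherited from $q_t$. I expect this to follow from Bérard-Bergery's rigid classification of fat homogeneous bundles over symmetric spaces, which restricts $K/H$ so severely (typically to spheres or rank one symmetric spaces in all the examples treated in Subsection~\ref{SS:fat_bundles_examples}) that the required inequality holds.
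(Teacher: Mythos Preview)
Your argument for the case $x_\mathfrak{p}\neq 0$ is correct and elegant: the injectivity of the projection $P\colon V\to\mathfrak{p}$, forced by fatness via $\ad(x_\mathfrak{p})|_\mathfrak{m}$ being injective, immediately yields $k+1\le\dim Z_\mathfrak{p}(x_\mathfrak{p})\le b(G/K)=k$. This is a genuinely different route from the paper's, which does not split on whether $x_\mathfrak{p}$ vanishes.

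The case $x_\mathfrak{p}=0$, however, is a real gap as written. You correctly deduce that $x,e_1,\dots,e_k$ all lie in $Z_\mathfrak{m}(x)$, but your proposed fix---appealing to B\'erard-Bergery's classification to check $b(K/H)\le b(G/K)$ case by case---is both unnecessary and not a proof. The missing ingredient is a general lemma of B\'erard-Bergery \cite[Lemme~6]{Be75} (with the caveat that his $\mathfrak{p}$ is our $\mathfrak{m}$): for \emph{any} fat homogeneous bundle, two elements of $\mathfrak{m}$ that commute must be linearly dependent. This gives $\dim Z_\mathfrak{m}(x)=1$ for every nonzero $x\in\mathfrak{m}$, which immediately contradicts $\dim Z_\mathfrak{m}(x)\ge k+1$ whenever $k\ge 1$. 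No classification is needed.

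For comparison, the paper's proof uses this same Lemme~6 but organizes the argument differently: from $[x_\mathfrak{p},(e_i)_\mathfrak{p}]=0$ for all $i$ and $b(G/K)=k$, one can always arrange (after relabelling) that $x_\mathfrak{p}$ and $(e_{i_0})_\mathfrak{p}$ are linearly dependent for some $i_0$; Lemme~6 applied to $[x_\mathfrak{m},(e_{i_0})_\mathfrak{m}]=0$ then forces $x_\mathfrak{m}$ and $(e_{i_0})_\mathfrak{m}$ to be linearly dependent as well; after a linear change one has $x\in\mathfrak{m}$, $e_{i_0}\in\mathfrak{p}$ (or vice versa), and fatness contradicts $[x,e_{i_0}]=0$. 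Your approach is arguably cleaner once Lemme~6 is in hand, since the case $x_\mathfrak{p}\neq 0$ avoids it entirely.
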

		\begin{proof}
			We argue by contradiction. 
			Suppose that there are orthonormal vectors 
			$$\{x,y^1,\dots,y^k\}\subset\mathfrak{m}\oplus\mathfrak{p}\simeq T_{eH}(G/H)$$ satisfying $\sum \sec_{q_t}(x,y^i)\leq 0$. Since $q_t$ is of $\sec\geq 0$ it follows that $\sec_{q_t}(x,y^i)=0$ for all $i$. 
			
			Observe that, being $(G,\langle\cdot,\cdot\rangle_t)$ of $\sec\geq 0$ and $(G,\langle\cdot,\cdot\rangle_t)\to (G/H,q_t)$ a Riemannian submersion, it follows that the planes spanned by $x,y^i$ in $\mathfrak{m}\oplus\mathfrak{p}\subset\mathfrak{g}$ are also flat with respect to $\langle\cdot,\cdot\rangle_t$. Since $G/K$ is symmetric, by Eschenburg's Theorem~\ref{THM:Eschenburg} we have that for all $i$,
			$$
			[x,y^i]=[x_\mathfrak{k},y^i_\mathfrak{k}]=[x_\mathfrak{p},y^i_\mathfrak{p}]=0.
			$$
			The identities $[x_\mathfrak{p},y^i_\mathfrak{p}]=0$ together with the assumption $b(G/K)=k$ imply that either:
			\begin{itemize}
			\item there is $i_0$ such that $x_\mathfrak{p},y^{i_0}_\mathfrak{p}$ are linearly dependent, or 
			\item the vectors ${y_\mathfrak{p}^1,...,y_\mathfrak{p}^k}$ are linearly dependent.
			\end{itemize}
			In the second case we can rearrange the orthonormal vectors ${y^1,...,y^k}$ so that $y_\mathfrak{p}^1=0$. Thus, in either case we can assume that there is $i_0$ such that $x_\mathfrak{p},y^{i_0}_\mathfrak{p}$ are linearly dependent.
			
			Now we look at the identity $[x_\mathfrak{k},y^{i_0}_\mathfrak{k}]=0$. Recall that $x,y^{i_0}\in\mathfrak{m}\oplus\mathfrak{p}$ and $\mathfrak{m}=\mathfrak{h}^\perp \cap\mathfrak{k}$, hence $x_\mathfrak{k}=x_\mathfrak{m}$  and $y^{i_0}_\mathfrak{k}=y^{i_0}_\mathfrak{m}$, and in particular $[x_\mathfrak{m},y^{i_0}_\mathfrak{m}]=0$. Since the bundle is fat, it follows from \cite[Lemma 6]{Be75} that $x_\mathfrak{m},y^{i_0}_\mathfrak{m}$ must be linearly dependent as well\footnote{Note that Bérard-Bergery uses a different notation with respect to ours: we interchange $H$ with $K$ and $\mathfrak{m}$ with~$\mathfrak{p}$.}. Hence we can rearrange the vectors $x,y^{i_0}$ so that either $x\in\mathfrak{p}$ and $y^{i_0}\in\mathfrak{m}$ or $x\in\mathfrak{m}$ and $y^{i_0}\in\mathfrak{p}$. In either case, the last condition $[x,y^{i_0}]=0$ together with Definition~\ref{DEF:fat} of fatness imply that $x,y^{i_0}$ are linearly dependent, which is a contradiction.
		\end{proof}

\subsection{Examples}\label{SS:fat_bundles_examples}

The classification of fat homogeneous bundles was achieved in \cite{Be75}, it was revisited in \cite[Section~3]{Zi99} and some cases were used in \cite[Section 6]{FZ11}. In this article we consider various such bundles, which can be divided in three different sets.

The first set consists of those bundles induced by the following nested inclusions, where $p,q$ are coprime integers with $pq>0$ and $n\geq 2$:
\[
\renewcommand{\arraycolsep}{0.5ex}
\begin{array}{clclcl}
\sss (\u{1}^{p,q}\times \u{n-1}) & <& \sss (\u{2}\times \u{n-1})&<& \su{n+1},  \\
\sp{1}\times \sp{1}\times\sp{n-1} & <& \sp{2}\times\sp{n-1} &<& \sp{n+1},\\
\sss (\u{2}\times \sp{2}) & <& \sss (\u{2}\times\u{4}) &<& \su{6},\\
\so{2}\times \spin{7} & <& \so{2}\times\so{8} &<& \so{10}.\\
\end{array}
\]
These inclusions correspond to (D.29), (C.23), (C.21) and (C.24) from \cite[Section~7]{Be75} and to (E.6), (E.5.c), (E.5.a) and (E.5.b) from \cite[Section~3]{Zi99}, respectively. The two first rows are also discussed in \cite[Section 6]{FZ11}. The inclusions are given by standard diagonal block embeddings except for the first one (explained in Subsection~\ref{SS:fat_homogeneous_bundles}), $\sp{2}<\su{4}$ (equivalent to the standard embedding $\spin{5}<\spin{6}$ via the corresponding isomorphisms) and $\spin{7} <\so{8}$ (induced by the irreducible representation of $\spin{7}$ on $\RR^8$). The base spaces of the corresponding fat bundles are $\gr{2}{n+1}{\C}$, $\gr{2}{n+1}{\H}$, $\gr{2}{6}{\C}$ and $\grp{2}{10}{\R}$, respectively. The total space $\sp{n+1}/(\sp{1}^2\times\sp{n-1})$ corresponding to the second row equals the projectivized tangent bundle of $\HP^n$, denoted by $\mathbb{P}_\HH T\HP^n$, and the low-dimensional case $\mathbb{P}_\HH T\HP^2$ is by definition the Wallach flag manifold $W^{12}$.

The second set comes from the following source. Recall that a compact irreducible symmetric space $G/K$ is Hermitian if $K$ locally splits off a circle factor $K=K'\u{1}$ (see \cite[p.~518]{Helgason} for their classification). The nested inclusion $K' <  K'\u{1} < G$ induces a fat homogeneous (circle) bundle; see \cite[Section~4]{Be75} or \cite[p.~17]{Zi99}. From Theorem~\ref{THM:observations_table_symmetric_spaces} we know that the Hermitian spaces of $\rank \geq 2$ and $b(G/K)\leq (\dim G/K) /2$ are

\begin{center}
 $\grp{2}{n+1}{\R}$, $\gr{2}{n+1}{\C}$, $\gr{3}{6}{\C}$, $\so{10}/\u{5}$, $\so{12}/\u{6}$, $\so{14}/\u{7}$,  $\mathsf{E}_6/\spin{10} \u{1}$ and $\mathsf{E}_7/\mathsf{E}_6\u{1}$.
\end{center} 
 
In the case of $\grp{2}{n+1}{\R}$, the total space $\so{n+1}/\so{n-1}$ of the corresponding fat bundle equals the unit tangent bundle $T^1\sph^{n}$. As mentioned in Subsection~\ref{SS:fat_homogeneous_bundles}, the Euler class of $\su{n+1}/(\su{2}\times\su{n-1})\to \gr{2}{n+1}{\C}$ is a generator of $H^2(\gr{2}{n+1}{\C};\ZZ)= \ZZ$; this follows from \cite[Lemma 12.2]{BKS15} since both spaces are simply connected.

The third set is similar to the previous one. Recall that a compact irreducible symmetric space $G/K$ is quaternionic if $K$ locally splits off an $\sp{1}$ factor $K=K'\sp{1}$ and the isotropy representation of this $\sp{1}$ factor is equivalent to the standard Hopf action on $\H^n$ (see \cite[Chapter 14.E]{Besse} for their classification). The bundle $G/K'\to G/K'\sp{1}$ is fat and principal, with the group $L$ acting being $\sp{1}$ or $\so{3}$; see \cite[Section~8, Cas (B)]{Be75} or \cite[p.~17]{Zi99}. Morever, the associated $2$-sphere homogeneous bundle $G/K' \times_{L}\sph^2\to G/K'\sp{1}$ is also fat, see Cas~(A) and Cas~(B) of Sections 7 and 8 in \cite{Be75} or \cite[Proposition 2.22]{Zi99}. 
The quaternionic spaces of $\rank \geq 2$ and $b(G/K)\leq (\dim G/K) /2$ are 

\begin{center}
$\gr{2}{n+1}{\C}$, $\grp{2}{6}{\R}$, $\mathsf{F}_4/\sp{3}\sp{1}$, $\mathsf{E}_6/\su{6}\su{2}$, $\mathsf{E}_7/\spin{12}\sp{1}$, $\mathsf{E}_8/\mathsf{E}_7\sp{1}$ and $\mathsf{G}_2/\so{4}$.
\end{center} 

For each such $G/K$ there is exactly one associated $\sp{1}$ bundle (and the corresponding $2$-sphere bundle) as above, except in the case $\mathsf{G}_2/\so{4}$ \cite[Section~7, Cas~(A) and Cas~(B)]{Be75}. The group $\so{4}$ has two normal subgroups $\su{2}^{\pm}$ corresponding to the image of $\su{2}\times \{1\}$ and $\{1\} \times \su{2}$ under the covering $\su{2}\times \su{2}\to \so{4}$, and both $\su{2}^{\pm}$ can be enlarged to subgroups $\su{2}^{\pm} < \u{2}^{\pm} < \so{4}$; see \cite[pp.~1--2]{KZ17}. In the case of $\gr{2}{n+1}{\C}$, the total space equals $W_{1,1}^{4n-1}\cong T^1\CP^n$ and the associated $2$-sphere bundle equals the projectivized tangent bundle $\mathbb{P}_\CC T\CP^n$ of $\CP^n$. 

We remark that $\mathbb{P}_\CC T\CP^n$ is not only related to $W_{1,1}^{4n-1}$ but to any of the spaces $W_{p,q}^{4n-1}$. In fact, for any $p,q$ there is a triple
\begin{equation}\label{EQ:AW_circle_bundle}
\sss (\u{1}^{p,q}\times \u{n-1}) <\sss (\u{1}\times\u{1}\times \u{n-1}) < \su{n+1}
\end{equation}
inducing a circle bundle $W_{p,q}^{4n-1} \to \mathbb{P}_\CC T\CP^n$. For our purposes it is not relevant whether these circle bundles are fat, but they will be very useful for topological computations in Subsection~\ref{SS:fat_bundles_topology}. 

The following theorem is obtained by applying Theorem~\ref{THM:Wallach_generalization} to the fat bundles above, in combination with the values $b(-)$ from Table~\ref{table:b_symmetric} at the end of the article. In order to keep track of the spaces in the theorem, observe that each table corresponds to the respective set of spaces discussed above, with the exception of $\mathbb{P}_\CC T\CP^n$ being included in the first set (since, as explained in the previous paragraph, it is associated to any of the $W_{p,q}^{4n-1}$).

\begin{theorem}\label{THM:homogeneous_spaces_fat}
Each homogeneous space $G/H$ in the following tables carries a homogeneous metric of $\Ric_k>0$ for the corresponding value of $k$ (we restrict to $n\geq 2$ and distinguish between the cases $n\neq 3$ and $n = 3$ where it corresponds):
\renewcommand{\arraystretch}{1.3}
\begin{center}
$
\begin{array}{c c c c} 
\hline
\multirow{2}{*}{\vspace{1ex}$G/H$} & \multirow{2}{*}{\vspace{1ex}$\dim G/H$} &  \multicolumn{2}{c}{k}     \\[-1.3ex] 
&&{\scriptstyle (n\neq 3)} & {\scriptstyle (n= 3)}
\\  
\hline
W_{p,q}^{4n-1}  & 4n-1  &  2n-3 & 4\\ 
\hline 
\mathbb{P}_\CC T\CP^n   & 4n-2  &  2n-3 & 4\\ 
\hline
\mathbb{P}_\HH T\HP^n   & 8n-4  &  4n-7& 6\\ 
\hline
\su{6}/\sss (\u{2}\times \sp{2})   & 21  &  \multicolumn{2}{c}{7} \\ 
\hline
\so{10}/(\so{2}\times \spin{7})  & 23  &  \multicolumn{2}{c}{8}\\ 
\hline
\end{array}
$
\end{center}
\vspace{-1ex}
\begin{center}
\begin{tabular}{c@{\qquad}c}
$
\begin{array}{c c c c } 
& & & \\
\hline
\multirow{2}{*}{\vspace{1ex}$G/H$} & \multirow{2}{*}{\vspace{1ex}$\dim G/H$} &  \multicolumn{2}{c}{k}     \\[-1.3ex] 
&&{\scriptstyle (n\neq 3)} & {\scriptstyle (n= 3)}
\\ 
\hline
T^1\sph^{n}   & 2n-1  &  n-1 & 3  \\ 
\hline
\su{n+1}/(\su{2}\times\su{n-1}) & 4n-3 & 2n-3 & 4\\ [0.5ex] 
\hline 
\su{6}/(\su{3}\times\su{3})   & 19  &  \multicolumn{2}{c}{9}  \\ 
  \hline
  \so{10}/\su{5}  & 21 & \multicolumn{2}{c}{7} \\ 
\hline
 \so{12}/\su{6}  & 31 & \multicolumn{2}{c}{15} \\ 
\hline
\so{14}/\su{7} & 43 & \multicolumn{2}{c}{21} \\ 
\hline 
  \mathsf{E}_6/\spin{10}   & 33  &  \multicolumn{2}{c}{11} \\ 
\hline
 \mathsf{E}_7/\mathsf{E}_6   & 55  &  \multicolumn{2}{c}{27}  \\ 
\hline
\end{array}
$
&
$
\begin{array}{c c c} 
\hline
 G/H & \dim G/H  &  k \\  
\hline
\so{6}/(\so{2}\times\su{2}) & 11 & 4 \\
\hline
\mathsf{F}_4/\sp{3} & 31  &  13  \\ 
\hline
\mathsf{E}_6/\su{6} & 43  & 19  \\ 
\hline
\mathsf{E}_7/\spin{12} & 67  &  31  \\ 
\hline
\mathsf{E}_8/\mathsf{E}_7 & 115  &  55  \\ 
\hline
 \mathsf{G}_2/\su{2}^{\pm}   & 11  &  3  \\ 
\hline 
\end{array}
$
\end{tabular}


\end{center}
Moreover, for each $G/H$ in the table on the right the associated $2$-sphere bundle $G/H \times_{L}\sph^2$ with $L=\sp{1}$ or $\so{3}$ carries a homogeneous metric of $\Ric_k>0$ for the same $k$.
\end{theorem}

\begin{remark}\label{rem:T1S3}
	As stated in Theorem~\ref{THM:homogeneous_simple_version}, $T^1\sph^3=\so{4}/\so{2}$ actually admits an $\so{4}$-homo\-ge\-neous metric of $\Ric_2>0$. Such metric is induced from the one obtained in $\sp{1}\times\sp{1}$ by applying Theorem~\ref{THM:Wilkings_products} to $G=\sp{1}$. Indeed, let $\pi\colon \sp{1}\times\sp{1}\to \so{4}$ be the Lie group covering map that sends a pair $(p,q)$ of unit quaternions to the orthogonal map $\pi(p,q)$ of $\R^4$ given by $\pi(p,q)v\defeq pvq^{-1}$, for each $v\in \R^4\cong \H=\spann_{\mathbb{R}}\{1,i,j,k\}$. The image under $\pi$ of the diagonal circle subgroup $\Delta \sph^1=\{(e^{i\theta},e^{i\theta}):\theta\in\R\}<\sp{1}\times\sp{1}$ leaves $\spann_{\mathbb{R}}\{1,i\}\subset\H$ pointwise fixed, and hence $\pi(\Delta\sph^1)=\{\diag(1,1,A):A\in\so{2}\}<\so{4}$. Thus, $\pi$ induces a diffeomorphism $[\pi]\colon (\sp{1}\times\sp{1})/\Delta \sph^1 \to\so{4}/\so{2}$ which is $\sp{1}\times\sp{1}$-equivariant, since $\pi$ is a Lie group homomorphism. Therefore, the submersion metric on $(\sp{1}\times\sp{1})/\Delta \sph^1$ of $\Ric_2>0$ determines an $\so{4}$-invariant metric of $\Ric_2>0$ on $\so{4}/\so{2}=T^1\sph^3$.	
\end{remark}

We finish with some topological comments. The spaces $\mathsf{G}_2/\su{2}^+$ and $\mathsf{G}_2/\u{2}^+$ have the rational cohomology ring (but not the integral cohomology ring) of $\sph^{11}$ and $\CP^5$ respectively; see \cite{KZ17} and \cite[pp.~195-196]{Be78}. The integral cohomology rings of several homogeneous spaces of exceptional Lie groups have been computed in \cite[Section~5.1]{Du13}.


\subsection{Topology}\label{SS:fat_bundles_topology}

Here we compute various topological properties of the spaces $W_{p,q}^{4n-1}$. Throughout this section all cohomology groups are understood to be taken with integer coefficients. As in Subsection~\ref{SS:fat_homogeneous_bundles}, $p,q$ are assumed to be coprime integers with $pq>0$ (although all computations also work for the case $pq<0$). The main tool will be the Gysin sequence associated to the following circle bundles induced from the triples \eqref{EQ:AW_circle_bundle}:
$$
\pi_{p,q}\colon W_{p,q}^{4n-1} \to \mathbb{P}_\CC T\CP^n.
$$
The cohomology ring of $\mathbb{P}_\CC T\CP^n$ was computed by Borel in \cite[Proposition 31.1]{Bo53}:
$$
H^*(\mathbb{P}_\CC T\CP^n)=\frac{S(x_1)\otimes S(x_2)\otimes S(x_3,\dots ,x_{n+1})}{S^+(x_1,\dots ,x_{n+1})},
$$
where $S(-)$ denotes symmetric polynomials in the corresponding variables (of degree $2$) and $S^+(-)\subset S(-)$ denotes those of positive degree. In particular, $H^*(\mathbb{P}_\CC T\CP^n)$ has no torsion and is concentrated in even degrees, with non-trivial groups given by:
$$
H^{2k}(\mathbb{P}_\CC T\CP^n)=\begin{cases}
\ZZ^{k+1},  &\text{ if } k< n,\\
\ZZ^{2n-k}, &\text{ if } n\leq k\leq 2n-1. \\
\end{cases}
$$
The cohomology ring can be rewritten as follows:
\begin{equation}\label{EQ:cohomology_PTCPn}
H^*(\mathbb{P}_\CC T\CP^n)=\frac{\ZZ[x,y]}{
\begin{cases}
x^{n}+x^{n-1}y+\dots +xy^{n-1}+y^n=0\\
xy(x^{n-1}+x^{n-2}y+\dots +xy^{n-2}+y^{n-1})=0 \\
\end{cases}
},
\end{equation}
where $x,y$ are of degree $2$. Moreover, as explained in \cite[pp.~473-474]{KS91} for the case $n=2$, the generators $x,y$ can be chosen so that the Euler class $e(\pi_{p,q})$ of $\pi_{p,q}$ equals
$$
e(\pi_{p,q})=-qx+py\in H^2(\mathbb{P}_\CC T\CP^n).
$$
With this information at hand one can fully compute the cohomology ring of $W_{p,q}^{4n-1}$ using the Gysin sequence, which gives exact sequences 
$$
0\to H^{2k-1}(W_{p,q}^{4n-1})\to H^{2k-2}(\mathbb{P}_\CC T\CP^n)\xrightarrow{e\cup} H^{2k}(\mathbb{P}_\CC T\CP^n)\to H^{2k}(W_{p,q}^{4n-1})\to 0.
$$
For our purposes it will sufficient to look at cohomology groups of degree $\leq 2n$. Multiplication by the Euler class $e\cup$ is injective for $2k\leq 2n$, thus it follows that the cohomology groups $H^{2k-1}(W_{p,q}^{4n-1})$ vanish. Consequently we get isomorphisms
\begin{equation}\label{EQ:quotient_euler_cohomology}
H^{2k}(W_{p,q}^{4n-1})=\frac{H^{2k}(\mathbb{P}_\CC T\CP^n)}{e\cup H^{2k-2}(\mathbb{P}_\CC T\CP^n)}.
\end{equation}
For $2k<2n$ there are no relations among the elements $x^k,x^{k-1}y,\dots,xy^{k-1},y^k$ from \eqref{EQ:cohomology_PTCPn}. Since $p,q$ are coprime with $pq\neq 0$ it follows that the quotient \eqref{EQ:quotient_euler_cohomology} is isomorphic to $\ZZ$.

The case $2k=2n$ is bit more involved, so let us have a closer look at the map $e\cup$. Using the first relation in \eqref{EQ:cohomology_PTCPn} we can give the following bases to the spaces involved:
\begin{align*}
H^{2n-2}(\mathbb{P}_\CC T\CP^n) &= \langle  x^{n-1},x^{n-2}y,\dots,xy^{n-2},y^{n-1}  \rangle \cong\ZZ^n, \\
H^{2n}(\mathbb{P}_\CC T\CP^n) &= \langle  x^{n-1}y,x^{n-2}y^2,\dots,xy^{n-1},y^{n}  \rangle \cong\ZZ^n.
\end{align*}
The $n\times n$ matrix representing the map $e\cup$ with respect to these bases equals:
$$
T_n(p,q)\defeq 
{\tiny
\begin{pmatrix}
p+q & -q & 0 & \cdots & 0 & 0\\
q & p & -q & \cdots & 0 & 0\\
q & 0 & p & \cdots & 0 & 0\\
\vdots & \vdots & \vdots & \ddots & \vdots & \vdots\\
q & 0 & 0 & \cdots & p & -q\\
q & 0 & 0 & \cdots & 0 & p\\
\end{pmatrix}
}.
$$
The quotient in \eqref{EQ:quotient_euler_cohomology} for $k=n$ is a finite group whose order equals the absolute value of the determinant of the matrix $T_n(p,q)$. We denote the latter by $\tau_n(p,q)$:
$$
\tau_n(p,q)\defeq \vert \det T_n(p,q) \vert = \vert p^n+p^{n-1}q+\dots + pq^{n-1}+q^n\vert .
$$
Observe that $\tau_n(p,q)>0$ if $pq>0$. Let us summarize the discussion above.

\begin{lemma}\label{LEM:topology_Aloff_Wallach}
The manifolds $W_{p,q}^{4n-1}$ are simply connected and the first $2n$ cohomology groups equal:
$$
H^{2k}(W_{p,q}^{4n-1})=\begin{cases}
\ZZ, & \text{ if } k \text{ even and} <2n,\\
\Gamma_{\tau_n(p,q)}, & \text{ if } k=2n,\\
0, & \text{ if } k \text{ odd and } <2n,\\
\end{cases}
$$
where $\Gamma_{\tau_n(p,q)}$ is some finite group of order $\tau_n(p,q)$.
\end{lemma}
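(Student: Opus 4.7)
The plan is to formalize the outline given in the discussion immediately preceding the statement. First I would establish simple connectivity. The base $\mathbb{P}_\CC T\CP^n=\su{n+1}/\sss(\u{1}\times\u{1}\times\u{n-1})$ is simply connected because it is a quotient of the simply connected group $\su{n+1}$ by a connected isotropy. From the circle fibration $\pi_{p,q}\colon W_{p,q}^{4n-1}\to\mathbb{P}_\CC T\CP^n$, the tail of the homotopy long exact sequence reads $\pi_2(\mathbb{P}_\CC T\CP^n)\to\pi_1(\mathsf{S}^1)\to\pi_1(W_{p,q}^{4n-1})\to 0$, with the first map given by evaluation against the Euler class $e=-qx+py\in H^2(\mathbb{P}_\CC T\CP^n)\cong\ZZ^2$. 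Since $\gcd(p,q)=1$, the class $e$ is primitive, the first map is surjective, and $W_{p,q}^{4n-1}$ is simply connected.

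Next I would run the Gysin sequence as written above the lemma. For $2k\leq 2n$, the cohomology $H^{2k-2}(\mathbb{P}_\CC T\CP^n)$ has basis $\{x^{k-1-j}y^j\}_{j=0}^{k-1}$, with no relations below degree $2n$ by the presentation \eqref{EQ:cohomology_PTCPn}. In particular, for $2k<2n$ the source and target of $e\cup$ are free of ranks $k$ and $k+1$, and injectivity of $e\cup$ follows because $e=-qx+py$ is a nonzero linear form in $\ZZ[x,y]$ and no relation of \eqref{EQ:cohomology_PTCPn} acts in this range. The Gysin sequence then forces $H^{2k-1}(W_{p,q}^{4n-1})=0$ and identifies $H^{2k}(W_{p,q}^{4n-1})$ with the cokernel of $e\cup$, which is $\ZZ$ in each even degree $2k<2n$ (the $(k+1)\times k$ matrix has elementary divisors all equal to $1$ when $\gcd(p,q)=1$).

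The remaining case $2k=2n$ is where the first relation of \eqref{EQ:cohomology_PTCPn} enters: both $H^{2n-2}(\mathbb{P}_\CC T\CP^n)$ and $H^{2n}(\mathbb{P}_\CC T\CP^n)$ are free of rank $n$, and multiplication by $e=-qx+py$, after using the relation $x^n+x^{n-1}y+\cdots+y^n=0$ to re-express $x^n$ in terms of the chosen basis, produces exactly the matrix $T_n(p,q)$ displayed in the paper. Hence $H^{2n}(W_{p,q}^{4n-1})$ is the cokernel of $T_n(p,q)\colon\ZZ^n\to\ZZ^n$, which is a finite abelian group of order $|\det T_n(p,q)|$, vanishing kernel being guaranteed once we compute that this determinant is nonzero.

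The only non-bookkeeping step is evaluating $\det T_n(p,q)$, and I would handle this by a single-row cofactor expansion inducting on $n$. Expanding along the last row, whose only nonzero entries are $q$ at $(n,1)$ and $p$ at $(n,n)$: the $(n,n)$-minor is exactly $T_{n-1}(p,q)$, while the $(n,1)$-minor is lower triangular with diagonal entries $-q$ and determinant $(-q)^{n-1}$. This yields the recursion $\det T_n(p,q)=q^n+p\cdot\det T_{n-1}(p,q)$, and starting from $\det T_1(p,q)=p+q$, an immediate induction gives $\det T_n(p,q)=p^n+p^{n-1}q+\cdots+pq^{n-1}+q^n$. Taking absolute values delivers $\tau_n(p,q)$ and completes the computation.
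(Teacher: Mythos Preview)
Your proof is correct and follows essentially the same Gysin-sequence argument as the paper, including the same identification of the Euler class and the same matrix $T_n(p,q)$ in degree $2n$. The only difference is that you supply the determinant computation by cofactor expansion along the last row (obtaining the recursion $\det T_n=q^n+p\det T_{n-1}$), whereas the paper simply states the value and cites Wilking for the order of $H^{2n}$; your simple-connectivity argument via the homotopy exact sequence is equivalent to the paper's appeal to primitivity of $e$.
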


The fundamental group of $W_{p,q}^{4n-1}$ can be computed using \cite[Lemma 12.2]{BKS15} in terms of the Euler class: since $p,q$ are coprime, the element $e(\pi_{p,q})=-qx+py$ is indivisible in $H^2(\mathbb{P}_\CC T\CP^n)$ and hence the total space $W_{p,q}^{4n-1}$ is simply connected. The order of $H^{2n}(W_{p,q}^{4n-1})$ can be extracted from Wilking's article \cite[p.~119]{Wi02}. He also observed that $\tau_n(p,q)= 0$ may occur if one allows $pq<0$. For the case $n=2$ the reader can find further details in the article of Kreck and Stolz \cite[pp. 473-474]{KS91}.



\newgeometry{top=2cm, bottom=2cm, left=2.5cm, right=2.5cm}
\LTcapwidth=\textheight
\begin{landscape}
	\thispagestyle{empty}
	\scriptsize
	\pdfbookmark{Table: Positive intermediate Ricci curvature for irreducible symmetric spaces}{}
\begin{longtable}{cccccccc}
	\caption{For each irreducible symmetric space $M$, we list the symmetric spaces $B_{\Phi^k}$, where $\Phi^k=\Lambda\setminus\{\alpha_k\}$, along with $b(M)=1+\max\{\dim B_{\Phi^k}:1\leq k\leq r\}$ and the index $k_{\text{max}}$ of the a simple root $\alpha_{k_{\text{max}}}$ for which the maximum is achieved. The symmetric spaces $M$ and $B_{\Phi^k}$ are listed up to finite quotients and coverings.}\label{table:b_symmetric}\\
	\renewcommand{\arraystretch}{1.3}
		Dynkin diagram & Type & Multiplicities & $M$ & $\dim M$ & $B_{\Phi^k}$, $\dim B_{\Phi^k}$ & $k_{\text{max}}$ & $b(M)$ 
		\\ \hline
		\endfirsthead
		\caption{(Continuation)}
		\\
		Dynkin diagram & Type & Multiplicities & $M$ & $\dim M$ & $B_{\Phi^k}$, $\dim B_{\Phi^k}$ & $k_{\text{max}}$ & $b(M)$ 
		\\ \hline
		\endhead
		\multirow{8}{*}{
		\begin{tikzpicture}[node distance=\nodedistance,g/.style={circle,inner sep=\circleinnersep,draw},a0/.style={rectangle,inner sep=\rectangleinnersep,draw}]
		\node[g] (a1) [label=below:$\alpha_1$] {};
		\node[g] (a2) [right=of a1,label=below:$\alpha_2$] {}
		edge [] (a1);
		\node[] (a3) [right=of a2] {};
		\node[g] (ap) [right=of a3, label=below:$\alpha_{r-1}$] {};
		\node[g] (an) [right=of ap, label=below:$\alpha_r$] {}
		edge [] (ap);
		\draw ($(a3)!.5!(ap)$) -- (ap);
		\draw ($(a3)!.5!(a2)$) -- (a2);
		\draw [dashed] ($(a3)!.5!(a2)$) -- ($(a3)!.5!(ap)$);
		\end{tikzpicture}	
	} 
		& $\sf{A}_1$ & $n$ & $\mathbb{S}^{n+1}$  & $n+1$ & $\{\text{point}\}$, $0$ & $1$ & $1$ 
		\\ \cline{2-8}
		& \multirow{2}{*}{$\sf{A}_r$}  & \multirow{2}{*}{$1,\dots,1$} & \multirow{2}{*}{$\su{r+1}/\so{r+1}$} & \multirow{2}{*}{$\frac{1}{2}r(r+3)$} & $\su{k}/\so{k}\times \su{r-k+1}/\so{r-k+1}$, & \multirow{2}{*}{$1$} &  \multirow{2}{*}{$\frac{1}{2}r(r+1)$}
		\\
		&&&&& $\frac{1}{2}((k-1)(k+2)+(r-k)(r-k+3))$
		\\ \cline{2-8}
		& \multirow{2}{*}{$\sf{A}_r$}& \multirow{2}{*}{$2,\dots,2$} & \multirow{2}{*}{$\su{r+1}$} & \multirow{2}{*}{$r(r+2)$} & $\su{k}\times \su{r-k+1}$, & \multirow{2}{*}{$1$} &  \multirow{2}{*}{$r^2$}
		\\
		&&&&& $k^2-1+(r-k)(r-k+2)$
		\\ \cline{2-8}
		&\multirow{2}{*}{$\sf{A}_r$} & \multirow{2}{*}{$4,\dots,4$} & \multirow{2}{*}{$\su{2r+2}/\sp{r+1}$} & \multirow{2}{*}{$r(2r+3)$}  & $\su{2k}/\sp{k}\times \su{2r-2k+2}/\sp{r-k+1}$, & \multirow{2}{*}{$1$}&  \multirow{2}{*}{$2r^2-r$}
		\\
		&&&&& $(k-1)(2k+1)+(r-k)(2r-2k+3)$
		\\ \cline{2-8}
		&$\sf{A}_2$ & $8,8$ & $\sf{E}_6/\sf{F}_4$ & $26$ & $\mathbb{S}^9$, $9$ & $1$ & $10$
		\\ \hline
		\multirow{4}{*}{
		\begin{tikzpicture}[node distance=\nodedistance,g/.style={circle,inner sep=\circleinnersep,draw},a0/.style={rectangle,inner sep=\rectangleinnersep,draw}]
		\node[g] (a2) [label=below:$\alpha_1$] {};
		\node[] (a3) [right=of a2] {};
		\node[g] (an2) [right=of a3, label=below:$\alpha_{r-2}$] {};
		\node[g] (an1) [right=of an2, label=below:$\alpha_{r-1}$] {}
		edge [] (an2);
		\node[g] (an) [right=of an1, label=below:$\alpha_r$] {};
		\draw [double, double distance=\circleinnersep, arrows={-Implies}] (an1.east) to (an.west);
		\draw ($(a3)!.5!(a2)$) -- (a2);
		\draw ($(a3)!.5!(an2)$) -- (an2);
		\draw [dashed] ($(a3)!.5!(a2)$) -- ($(a3)!.5!(an2)$);
		\end{tikzpicture}	
	}
		&\multirow{2}{*}{$\sf{B}_r$} & \multirow{2}{*}{$1,\dots, 1, n$} & \multirow{2}{*}{$\grp{r}{2r+n}{\R}$, $n\geq 1$} & \multirow{2}{*}{$r(r+n)$} & $\su{k}/\so{k}\times\grp{r-k}{2r-2k+n}{\R}$, & \multirow{2}{*}{$1$} & \multirow{2}{*}{$r^2+(n-2)(r-1)$}
		\\
		&&&&& $\frac{1}{2}(k-1)(k+2)+(r-k)(r-k+n)$
		\\ \cline{2-8}
		&\multirow{2}{*}{$\sf{B}_r$} & \multirow{2}{*}{$2,\dots, 2, 2$} & \multirow{2}{*}{$\so{2r+1}$} & \multirow{2}{*}{$r(2r+1)$} & $\su{k}\times \so{2r-2k+1}$, & \multirow{2}{*}{$1$} & \multirow{2}{*}{$2r^2-3r+2$}
		\\
		&&&&& $k^2-1+(r-k)(2r-2k+1)$
		\\
		\hline
			\multirow{13}{*}{
		\begin{tikzpicture}[node distance=\nodedistance,g/.style={circle,inner sep=\circleinnersep,draw},a0/.style={rectangle,inner sep=\rectangleinnersep,draw}]
		\node[g] (a1) [label=below:$\alpha_1$] {};
		\node[] (a3) [right=of a1] {};
		\node[g] (an2) [right=of a3, label=below:$\alpha_{r-2}$] {};
		\node[g] (an1) [right=of an2, label=below:$\alpha_{r-1}$] {}
		edge [] (an2);
		\node[g] (an) [right=of an1, label=below:$\alpha_r$] {};
		\draw [double, double distance=\circleinnersep, arrows={-Implies}] (an.west) to (an1.east);
		\draw ($(a3)!.5!(a1)$) -- (a1);
		\draw ($(a3)!.5!(an2)$) -- (an2);
		\draw [dashed] ($(a3)!.5!(a1)$) -- ($(a3)!.5!(an2)$);
		\end{tikzpicture}	
	}
		& \multirow{2}{*}{$\sf{C}_r$} & \multirow{2}{*}{$1,\dots, 1, 1$} & \multirow{2}{*}{$\sp{r}/\u{r}$} & \multirow{2}{*}{$r(r+1)$} & $\su{k}/\so{k}\times \sp{r-k}/\u{r-k}$, & \multirow{2}{*}{$1$} & \multirow{2}{*}{$r^2-r+1$}
		\\
		&&&&& $\frac{1}{2}(k-1)(k+2)+(r-k)(r-k+1)$
		\\ \cline{2-8}
		& \multirow{2}{*}{$\sf{C}_r$}
		& \multirow{2}{*}{$2,\dots, 2,1$}
		& \multirow{2}{*}{$\gr{r}{2r}{\C}$}
		& \multirow{2}{*}{$2r^2$} 
		& $\su{k}\times\gr{r-k}{2r-2k}{\C}$,
		& $1$ & $2r^2-4r+3$ ($r\geq 3$)
		\\ 
		&&&&& $k^2-1+2(r-k)^2$ & $r$ & $r^2$ ($r\leq 2$)
		\\ \cline{2-8}
		& \multirow{2}{*}{$\sf{C}_r$} & \multirow{2}{*}{$2,\dots, 2,2$} & \multirow{2}{*}{$\sp{r}$} & \multirow{2}{*}{$r(2r+1)$} & $\su{k}\times\sp{r-k}$, & \multirow{2}{*}{$1$} & \multirow{2}{*}{$2r^2-3r+2$}
		\\
		&&&&& $k^2-1+(r-k)(2r-2k+1)$
		\\ \cline{2-8}
		& \multirow{2}{*}{$\sf{C}_r$} 
		& \multirow{2}{*}{$4,\dots, 4,1$} 
		& \multirow{2}{*}{$\so{4r}/\u{2r}$} 
		& \multirow{2}{*}{$2r(2r-1)$} 
		& $\su{2k}/\sp{k}\times\so{4r-4k}/\u{2r-2k}$,
		& $1$ & $4r^2-10r+7$ ($r\geq 4$)
		\\ 
		&&&&& $(k-1)(2k+1)+2(r-k)(2r-2k-1)$ & $r$ & $2r^2-r$ ($r\leq 3$)
		\\ \cline{2-8}
		& \multirow{2}{*}{$\sf{C}_r$}
		& \multirow{2}{*}{$4,\dots, 4,3$}
		& \multirow{2}{*}{$\gr{r}{2r}{\H}$}
		& \multirow{2}{*}{$4r^2$}
		& $\su{2k}/\sp{k}\times\gr{r-k}{2r-2k}{\H}$,
		& $1$ & $4r^2-8r+5$ ($r\geq 3$)
		\\ 
		&&&&& $(k-1)(2k+1)+4(r-k)^2$ & $r$ & $2r^2-r$ ($r\leq 2$)
		\\ \cline{2-8}
		& \multirow{3}{*}{$\sf{C}_3$}
		& \multirow{3}{*}{$8,8,1$}
		& \multirow{3}{*}{$\sf{E}_7/\sf{E}_6\u1$}
		& \multirow{3}{*}{$54$}
		& $\grp{2}{12}{\R}$, $20$ ($k=1$) 
		& \multirow{3}{*}{$3$}
		& \multirow{3}{*}{$27$}
		\\
		&&&&& $\mathbb{S}^9\times\mathbb{S}^2$, $11$ ($k=2$)&
		\\
		&&&&& $\sf{E}_6/\sf{F}_4$, $26$ ($k=3$)
		\\
		\hline
		\pagebreak
%
%
		\multirow{4}{*}{
		\begin{tikzpicture}[node distance=\nodedistance, g/.style={circle,inner sep=\circleinnersep,draw},a0/.style={rectangle,inner sep=\rectangleinnersep,draw}]
		\node[g] (a2) [label=below:$\alpha_1$] {};
		\node[] (a3) [right=of a2] {};
		\node[g] (an3) [right=of a3, label=below:$\alpha_{r-3}$] {};
		\node[g] (an2) [right=of an3, label=below:$\alpha_{r-2}$] {}
		edge [] (an3);
		\node[g] (an1) at ($(an2)+(-25:0.7)$) [label=right:$\alpha_{r-1}$] {}
		edge [] (an2);
		\node[g] (an) at ($(an2)+(25:0.7)$) [label=right:$\alpha_r$] {}
		edge [] (an2);
		\draw ($(a3)!.5!(a2)$) -- (a2);
		\draw ($(a3)!.5!(an3)$) -- (an3);
		\draw [dashed] ($(a3)!.5!(a2)$) -- ($(a3)!.5!(an3)$);
		\end{tikzpicture}
		}
		& \multirow{2}{*}{$\sf{D}_r$}
		& \multirow{2}{*}{$1,\dots, 1,1,1$}
		& \multirow{2}{*}{$\grp{r}{2r}{\R}$}
		& \multirow{2}{*}{$r^2$}
		& $\su{k}/\so{k}\times\grp{r-k}{2r-2k}{\R}$,
		& $1$ & $r^2-2r+2$ ($r\geq 4$)
		\\ 
		&&&&& $\frac{1}{2}(k-1)(k+2)+(r-k)^2$ & $r$ & $\frac{1}{2}(r^2+r)$ ($r=3$)
		\\ \cline{2-8}
		& \multirow{2}{*}{$\sf{D}_r$}
		& \multirow{2}{*}{$2,\dots, 2,2,2$}
		& \multirow{2}{*}{$\so{2r}$}
		& \multirow{2}{*}{$r(2r-1)$}
		& $\su{k}\times\so{2r-2k}$,
		& $1$ & $2r^2-5r+4$ ($r\geq 4$)
		\\
		&&&&& $k^2-1+(r-k)(2r-2k-1)$ & $r$ & $r^2$ ($r = 3$)
				\\ \hline
		\multirow{9}{*}{
			\begin{tikzpicture}[node distance=\nodedistance,g/.style={circle,inner sep=\circleinnersep,draw},a0/.style={circle,inner sep=\circleinnersep,draw,double,double distance=0.6*\circleinnersep}]
			\node[g] (a2) [label=below:$\alpha_1$] {};
			\node[] (a3) [right=of a2] {};
			\node[g] (an2) [right=of a3, label=below:$\alpha_{r-2}$] {};
			\node[g] (an1) [right=of an2, label=below:$\alpha_{r-1}$] {}
			edge [] (an2);
			\node[a0] (an) [right=of an1, label=below:$\alpha_r$] {};
			\draw ($(a3)!.5!(a2)$) -- (a2);
			\draw ($(a3)!.5!(an2)$) -- (an2);
			\draw [dashed] ($(a3)!.5!(a2)$) -- ($(a3)!.5!(an2)$);
			\draw [double, double distance=\circleinnersep, arrows={Implies-Implies}] (an.west) to (an1.east);
			\end{tikzpicture}	
		}
		& \multirow{2}{*}{$\sf{BC}_r$} & \multirow{2}{*}{$2,\dots, 2, 2n[1]$} & \multirow{2}{*}{$\gr{r}{2r+n}{\C}$, $n\geq 1$} & \multirow{2}{*}{$2r(r+n)$} & $\su{k}\times\gr{r-k}{2r-2k+n}{\C}$, & \multirow{2}{*}{$1$} & \multirow{2}{*}{$2r^2+2(n-2)r+3-2n$}
		\\
		&&&&& $k^2-1+2(r-k)(r-k+n)$
		\\ \cline{2-8}
		& \multirow{2}{*}{$\sf{BC}_r$} & \multirow{2}{*}{$4,\dots, 4, 4n[3]$} & \multirow{2}{*}{$\gr{r}{2r+n}{\H}$, $n\geq 1$} & \multirow{2}{*}{$4r(r+n)$} & $\su{2k}/\sp{k}\times\gr{r-k}{2r-2k+n}{\H}$, & \multirow{2}{*}{$1$} & \multirow{2}{*}{$4r^2+4(n-2)r+5-4n$}
		\\
		&&&&& $(k-1)(2k+1)+4(r-k)(r-k+n)$
		\\ \cline{2-8}
		& \multirow{2}{*}{$\sf{BC}_r$} & \multirow{2}{*}{$4,\dots, 4, 4[1]$} & \multirow{2}{*}{$\so{4r+2}/\u{2r+1}$ }& \multirow{2}{*}{$2r(2r+1)$} & $\su{2k}/\sp{k}\times \so{4r-4k+2}/\u{2r-2k+1}$, & \multirow{2}{*}{$1$} & \multirow{2}{*}{$4r^2-6r+3$}
		\\
		&&&&& $(k-1)(2k+1)+(r-k)(4r-4k+2)$
		\\ \cline{2-8}
		& \multirow{2}{*}{$\sf{BC}_2$}
		& \multirow{2}{*}{$6, 8[1]$}
		& \multirow{2}{*}{$\sf{E}_6/\sf{Spin}_{10}\sf{U}_1$}
		& \multirow{2}{*}{$32$}
		& $\CP^5$, $10$ ($k=1$) 
		& \multirow{2}{*}{$1$}
		& \multirow{2}{*}{$11$}
		\\
		&&&&& $\mathbb{S}^7$, $7$ ($k=2$)
		\\ \cline{2-8}
		& $\sf{BC}_1$ & $8[7]$ & $\sf{F}_4/\spin{9}$ & $16$ & $\{\text{point}\}$, $0$ & $1$ & $1$
		\\ 
		\hline
\end{longtable}
\pagebreak
\addtocounter{table}{-1}
\begin{longtable}{ccccccccccc}
	\caption{(Continuation)}\\
	\renewcommand{\arraystretch}{1.3}
		Dynkin diagram & Type & Multiplicities & $M$ & $\dim M$ & $k$ & $B_{\Phi^k}$ & $\dim B_{\Phi^k}$ & $k_{\text{max}}$ & $b(M)$ 
	\\ \hline
	\endfirsthead
	\caption{(Continuation)}
	\\
		Dynkin diagram & Type & Multiplicities & $M$ & $\dim M$ & $k$ & $B_{\Phi^k}$ & $\dim B_{\Phi^k}$ & $k_{\text{max}}$ & $b(M)$ 
	\\ \hline
	\endhead	
		\multirow{2}{*}{
			\begin{tikzpicture}[node distance=\nodedistance, g/.style={circle,inner sep=\circleinnersep,draw},a0/.style={rectangle,inner sep=\rectangleinnersep,draw}]
			\node[g] (a1) [label=below:$\alpha_1$] {};
			\node[g] (a2) [right=of a1, label=below:$\alpha_2$] {}
			edge []  (a1);
			\draw [double, double distance=1.7*\circleinnersep, arrows={-Implies}] (a2.west) to (a1.east);
			\draw [] (a1.east) to (a2.west);
			\end{tikzpicture}
		}
		& $\sf{G}_2$ & $1,1$ & $\sf{G}_2/\so{4}$ & $8$ & $1,2$ & $\mathbb{S}^2$ & $2$ & $1$ & $3$
		\\ \cline{2-10}
		& $\sf{G}_2$ & $2,2$ & $\sf{G}_2$ & $14$ & $1,2$ & $\su{2}$ & $3$ & $1$ & $4$
		\\ 
		\hline
		\multirow{18}{*}{
			\begin{tikzpicture}[node distance=\nodedistance, g/.style={circle,inner sep=\circleinnersep,draw},a0/.style={rectangle,inner sep=\rectangleinnersep,draw}]
			\node[g] (a1) [label=below:$\alpha_1$] {};
			\node[g] (a2) [right=of a1, label=below:$\alpha_2$] {}
			edge [] (a1);
			\node[g] (a3) [right=of a2, label=below:$\alpha_3$] {};
			\node[g] (a4) [right=of a3, label=below:$\alpha_4$] {}
			edge [] (a3);
			\draw [double, double distance=\circleinnersep, arrows={-Implies}] (a2.east) to (a3.west);
			\end{tikzpicture}
		}
		& \multirow{3}{*}{$\sf{F}_4$}
		& \multirow{3}{*}{$1,1,1,1$}
		& \multirow{3}{*}{$\sf{F}_4/\sp{3}\sp{1}$}
		& \multirow{3}{*}{$28$}
		& $1$
		& $\sp{3}/\u{3}$
		& $12$
		& \multirow{3}{*}{$1$}
		& \multirow{3}{*}{$13$}
		\\
		&&&&& $2,3$
		& $\mathbb{S}^2\times\su{3}/\so{3}$ 
		& $7$
		\\
		&&&&& $4$ 
		& $\grp{3}{7}{\R}$ 
		& $12$
		\\ \cline{2-10}
		& \multirow{4}{*}{$\sf{F}_4$}
		& \multirow{4}{*}{$1,1,2,2$}
		& \multirow{4}{*}{$\sf{E}_6/\su{6}\su{2}$}
		& \multirow{4}{*}{$40$}
		& $1$
		& $\gr{3}{6}{\C}$
		& $18$
		& \multirow{4}{*}{$1$} 
		& \multirow{4}{*}{$19$}
		\\
		&&&&& $2$
		& $\mathbb{S}^2\times\su{3}$
		& $10$
		\\
		&&&&& $3$
		& $\su{2}\times \su{3}/\so{3}$
		& $8$
		\\
		&&&&& $4$
		&$\grp{3}{8}{\R}$
		& $15$
		\\ \cline{2-10}
		& \multirow{4}{*}{$\sf{F}_4$}
		& \multirow{4}{*}{$1,1,4,4$}
		& \multirow{4}{*}{$\sf{E}_7/\spin{12}\sp{1}$}
		& \multirow{4}{*}{$64$}
		& $1$
		& $\so{12}/\u{6}$
		& $30$
		& \multirow{4}{*}{$1$} 
		& \multirow{4}{*}{$31$}
		\\
		&&&&& $2$
		& $\mathbb{S}^2\times\su{6}/\sp{3}$
		& $16$
		\\
		&&&&& $3$
		& $\su{3}/\so{3}\times\mathbb{S}^5$
		& $10$
		\\
		&&&&& $4$
		&$\grp{3}{10}{\R}$
		& $21$
		\\ \cline{2-10}
		& \multirow{4}{*}{$\sf{F}_4$}
		& \multirow{4}{*}{$1,1,8,8$}
		& \multirow{4}{*}{$\sf{E}_8/\sf{E}_{7}\sp{1}$}
		& \multirow{4}{*}{$112$}
		& $1$
		& $\mathsf{E}_7/\mathsf{E}_6\u{1}$
		& $54$
		& \multirow{4}{*}{$1$} 
		& \multirow{4}{*}{$55$}
		\\
		&&&&& $2$
		& $\mathbb{S}^2\times\mathsf{E}_6/\mathsf{F}_4$
		& $28$
		\\
		&&&&& $3$
		& $\su{3}/\so{3}\times \mathbb{S}^9$
		& $14$
		\\
		&&&&& $4$
		&$\grp{3}{14}{\R}$
		& $33$
		\\ \cline{2-10}
		& \multirow{3}{*}{$\sf{F}_4$}
		& \multirow{3}{*}{$2,2,2,2$}
		& \multirow{3}{*}{$\sf{F}_4$}
		& \multirow{3}{*}{$52$}
		& $1$
		& $\sp{3}$
		& $21$
		& \multirow{3}{*}{$1$}
		& \multirow{3}{*}{$22$}
		\\
		&&&&& $2,3$
		& $\su{2}\times\su{3}$ 
		& $11$
		\\
		&&&&& $4$ 
		& $\so{7}$ 
		& $21$
		\\
		\hline
		\multirow{8}{*}{
		\begin{tikzpicture}[node distance=\nodedistance, g/.style={circle,inner sep=\circleinnersep,draw},a0/.style={rectangle,inner sep=\rectangleinnersep,draw}]
		\node[g] (a1) [label=above:$\alpha_6$] {};
		\node[g] (a3) [left=of a1, label=above:$\alpha_5$] {}
		edge [] (a1);
		\node[g] (a4) [left=of a3, label=85:$\alpha_4$] {}
		edge [] (a3);
		\node[g] (a2) [above=of a4, label=left:$\alpha_2$] {}
		edge [] (a4);
		\node[g] (a5) [left=of a4, label=above:$\alpha_3$] {}
		edge [] (a4);
		\node[g] (a6) [left=of a5, label=above:$\alpha_1$] {}
		edge [] (a5);
		\end{tikzpicture}
		}
		& \multirow{4}{*}{$\sf{E}_6$}
		& \multirow{4}{*}{$1,\dots, 1$}
		& \multirow{4}{*}{$\sf{E}_6/(\sp4/\mathbb{Z}_2)$}
		& \multirow{4}{*}{$42$}
		& $1,6$
		& $\grp{5}{10}{\R}$
		& $25$
		& \multirow{4}{*}{$1$}
		& \multirow{4}{*}{$26$}
		\\
		&&&&& $3,5$ 
		& $\mathbb{S}^2\times \su{5}/\so{5}$ 
		& $16$
		\\
		&&&&& $4$
		& $(\su{3}/\so{3})^2\times \mathbb{S}^2$
		& $12$
		\\
		&&&&& $2$ & $\su{6}/\so{6}$
		& $20$
		\\ \cline{2-10}
		& \multirow{4}{*}{$\sf{E}_6$}
		& \multirow{4}{*}{$2,\dots, 2$}
		& \multirow{4}{*}{$\sf{E}_6$}
		& \multirow{4}{*}{$78$} 
		& $1,6$
		& $\so{10}$
		& $45$
		& \multirow{4}{*}{$1$}
		& \multirow{4}{*}{$46$}
		\\
		&&&&& $3,5$
		& $\su{2}\times\su{5}$
		& $27$
		\\
		&&&&& $4$
		& $\su{3}^2\times \su{2}$
		& $19$
		\\
		&&&&& $2$ & $\su{6}$ & $35$
		\\ \hline
	
		\pagebreak

		\multirow{14}{*}{
			\begin{tikzpicture}[node distance=\nodedistance, g/.style={circle,inner sep=\circleinnersep,draw},a0/.style={rectangle,inner sep=\rectangleinnersep,draw}]
			\node[g] (a1) [label=above:$\alpha_7$] {};
			\node[g] (a3) [left=of a1, label=above:$\alpha_6$] {}
			edge [] (a1);
			\node[g] (a4) [left=of a3, label=85:$\alpha_5$] {}
			edge [] (a3);
			\node[g] (a2) [above=of a4, label=left:$\alpha_2$] {}
			edge [] (a4);
			\node[g] (a5) [left=of a4, label=above:$\alpha_4$] {}
			edge [] (a4);
			\node[g] (a6) [left=of a5, label=above:$\alpha_3$] {}
			edge [] (a5);
			\node[g] (a7) [left=of a6, label=above:$\alpha_1$] {}
			edge [] (a6);
			\end{tikzpicture}
		}
		& \multirow{7}{*}{$\sf{E}_7$}
		& \multirow{7}{*}{$1,\dots, 1$}
		& \multirow{7}{*}{$\sf{E}_7/(\su8/\mathbb{Z}_2)$}
		& \multirow{7}{*}{$70$}
		& $1$
		& $\sf{E}_6/(\sp{4}/\mathbb{Z}_2)$ 
		& $42$
		& \multirow{7}{*}{$1$}
		& \multirow{7}{*}{$43$}
		\\
		&&&&& $2$
		& $\su{7}/\so{7}$
		& $27$
		\\
		&&&&& $3$
		& $\mathbb{S}^2\times \grp{5}{10}{\R}$ & $27$
		\\
		&&&&& $4$ 
		& $\su{3}/\so{3} \times \su{5}/\so{5}$ & $19$
		\\
		&&&&& $5$ 
		& $\mathbb{S}^2\times \su{3}/\so{3}\times \su{4}/\so{4}$ & $16$
		\\
		&&&&& $6$
		& $\mathbb{S}^2\times \su{6}/\so{6}$ 
		& $22$
		\\
		&&&&& $7$ 
		& $\grp{6}{12}{\R}$
		& $36$
		\\ \cline{2-10}
		& \multirow{7}{*}{$\sf{E}_7$}
		& \multirow{7}{*}{$2,\dots, 2$}
		& \multirow{7}{*}{$\sf{E}_7$}
		& \multirow{7}{*}{$133$}
		& $1$
		& $\sf{E}_6$ 
		& $78$
		& \multirow{7}{*}{$1$}
		& \multirow{7}{*}{$79$}
		\\
		&&&&& $2$
		& $\su{7}$
		& $48$
		\\
		&&&&& $3$
		& $\su{2}\times \so{10}$ & $48$
		\\
		&&&&& $4$ 
		& $\su{3} \times \su{5}$ & $32$
		\\
		&&&&& $5$ 
		& $\su{2}\times \su{3}\times \su{4}$ & $26$
		\\
		&&&&& $6$
		& $\su{2}\times \su{6}$ 
		& $38$
		\\
		&&&&& $7$ 
		& $\so{12}$
		& $66$
		\\ \hline
		\multirow{16}{*}{
		\begin{tikzpicture}[node distance=\nodedistance, g/.style={circle,inner sep=\circleinnersep,draw},a0/.style={rectangle,inner sep=\rectangleinnersep,draw}]
		\node[g] (a1) [label=above:$\alpha_8$] {};
		\node[g] (a3) [left=of a1, label=above:$\alpha_7$] {}
		edge [] (a1);
		\node[g] (a4) [left=of a3, label=85:$\alpha_6$] {}
		edge [] (a3);
		\node[g] (a2) [above=of a4, label=left:$\alpha_2$] {}
		edge [] (a4);
		\node[g] (a5) [left=of a4, label=above:$\alpha_5$] {}
		edge [] (a4);
		\node[g] (a6) [left=of a5, label={above:$\alpha_4$}] {}
		edge [] (a5);
		\node[g] (a7) [left=of a6, label=above:$\alpha_3$] {}
		edge [] (a6);
		\node[g] (a8) [left=of a7, label=above:$\alpha_1$] {}
		edge [] (a7);
		\end{tikzpicture}
		}
		& \multirow{8}{*}{$\sf{E}_8$}
		& \multirow{8}{*}{$1,\dots, 1$}
		& \multirow{8}{*}{$\sf{E}_8/(\spin{16}/\mathbb{Z}_2)$}
		& \multirow{8}{*}{$128$}
		& $1$
		& $\mathsf{E}_7/(\su{8}/\mathbb{Z}_2)$ 
		& $70$
		& \multirow{8}{*}{$1$}
		& \multirow{8}{*}{$71$}
		\\
		&&&&& $2$
		& $\su{8}/\so{8}$
		& $35$
		\\
		&&&&& $3$
		& $\mathbb{S}^2\times \mathsf{E}_6/(\sp{4}/\mathbb{Z}_2)$ & $44$
		\\
		&&&&& $4$ 
		& $\su{3}/\so{3} \times \grp{5}{10}{\R}$ & $30$
		\\
		&&&&& $5$ 
		& $\su{4}/\so{4}\times \su{5}/\so{5}$ & $23$
		\\
		&&&&& $6$
		& $\mathbb{S}^2\times \su{5}/\so{5}\times\su{3}/\so{3}$ 
		& $21$
		\\
		&&&&& $7$ 
		& $\su{7}/\so{7}\times\mathbb{S}^2$
		& $29$
		\\
		&&&&& $8$ 
		& $\grp{7}{14}{\R}$
		& $49$
		\\ \cline{2-10}
		& \multirow{8}{*}{$\sf{E}_8$}
		& \multirow{8}{*}{$2,\dots, 2$}
		& \multirow{8}{*}{$\sf{E}_8$}
		& \multirow{8}{*}{$248$}
		& $1$
		& $\sf{E}_7$ 
		& $133$
		& \multirow{8}{*}{$1$}
		& \multirow{8}{*}{$134$}
		\\
		&&&&& $2$
		& $\su{8}$
		& $63$
		\\
		&&&&& $3$
		& $\su{2}\times \mathsf{E}_6$ & $81$
		\\
		&&&&& $4$ 
		& $\su{3} \times \so{10}$ & $53$
		\\
		&&&&& $5$ 
		& $\su{4}\times \su{5}$ & $39$
		\\
		&&&&& $6$
		& $\su{2}\times \su{5}\times \su{3}$ 
		& $35$
		\\
		&&&&& $7$ 
		& $\su{7}\times\su{2}$
		& $51$
		\\
		&&&&& $8$ 
		& $\so{14}$
		& $91$
		\\
		\hline
\end{longtable}
\end{landscape}
\restoregeometry


\end{document}